\documentclass[11pt, oneside, reqno]{amsart}
\usepackage[utf8]{inputenc}
\usepackage{mathrsfs}
\usepackage{bm}
\usepackage{amstext}
\usepackage{amsthm}
\usepackage{amssymb}
\usepackage{mathtools}
\usepackage[all]{xy}
\usepackage{cleveref}
\usepackage{graphics}
\usepackage{ytableau}
\usepackage{version}
\usepackage{colonequals}
\usepackage{color}

\makeatletter
\@namedef{subjclassname@2020}{\textup{2020} Mathematics Subject Classification}
\makeatother
\numberwithin{equation}{section}
\numberwithin{figure}{section}
\theoremstyle{plain}
\newtheorem{thm}{Theorem}[section]
  \crefname{thm}{Theorem}{Theorems}
  
  \crefname{lem}{Lemma}{Lemmas}
  \newtheorem{prop}[thm]{Proposition}
  \crefname{prop}{Proposition}{Propositions}
  
	\crefname{cor}{Corollary}{Corollaries}

  \newtheorem*{ack}{Acknowledgments}
\theoremstyle{definition}
  \newtheorem{defi}[thm]{Definition}
  \crefname{defi}{Definition}{Definitions}
  \theoremstyle{remark}
  
  \crefname{ntn}{Notation}{Notations}
	 \theoremstyle{remark}
  \newtheorem{rem}[thm]{Remark}
  \crefname{rem}{Remark}{Remarks}
  \newtheorem{ex}[thm]{Example}
  \crefname{ex}{Example}{Examples}
  
\usepackage{a4wide}

\def\r{\mathbb{R}}

\def\z{\mathbb{Z}}

\makeatother

\makeatletter
\ifcsname phantomsection\endcsname
    \newcommand*{\qrr@gobblenexttocentry}[5]{}
\else
    \newcommand*{\qrr@gobblenexttocentry}[4]{}
\fi
\newcommand*{\addsubsection}{%
    \addtocontents{toc}{\protect\qrr@gobblenexttocentry}%
    \subsection}
\makeatother
 \makeatletter
    
    \@addtoreset{equation}{section}
  \makeatother
\begin{document}
\title{Polyptych lattices and marked chain-order polytopes}

\author{Naoki FUJITA}

\address[Naoki FUJITA]{Faculty of Advanced Science and Technology, Kumamoto University, Kumamoto 860-8555, Japan.}

\email{fnaoki@kumamoto-u.ac.jp}

\author{Akihiro HIGASHITANI}

\address[Akihiro HIGASHITANI]{Department of Pure and Applied Mathematics, Graduate School of Information Science and Technology, Osaka University, Suita, Osaka 565-0871, Japan.}

\email{higashitani@ist.osaka-u.ac.jp}

\subjclass[2020]{Primary 52B20; Secondary 05E10, 06A07, 14M15, 14M25}

\keywords{polyptych lattice, marked chain-order polytope, combinatorial mutation, toric degeneration, Newton--Okounkov body, Cox ring, Gelfand--Tsetlin poset}

\thanks{The work was supported by JSPS Grant-in-Aid for Scientific Research (B) (No.\ 24K00521). 
The work of the first named author was supported by JSPS Grant-in-Aid for Early-Career Scientists (No.\ 24K16902) and by MEXT Japan Leading Initiative for Excellent Young Researchers (LEADER) Project.}

\date{\today}

\begin{abstract}
The theory of polyptych lattices is a framework to obtain a family of toric degenerations whose polytopes are related by piecewise-linear transformations. 
It can be regarded as a generalization of toric degenerations arising from cluster algebras. 
In this paper, we study polyptych lattices consisting of transfer maps for marked chain-order polytopes, and obtain a family of toric degenerations of a projective variety to marked chain-order polytopes for the Gelfand--Tsetlin poset.
We also compute the Cox ring of this projective variety.
\end{abstract}
\maketitle
\tableofcontents 

\section{Introduction}

A marked chain-order polytope $\Delta_{\mathcal{C}, \mathcal{O}}(\Pi, \Pi^\ast, \lambda)$ is a convex polytope constructed from a marked poset $(\Pi, \Pi^\ast, \lambda)$ with a partition $\Pi \setminus \Pi^\ast = \mathcal{C} \sqcup \mathcal{O}$ under the assumption that $\Pi$ is finite and that $\Pi^\ast$ contains all maximal and minimal elements in $\Pi$.
It was introduced by Fang--Fourier \cite{FF} in some specific setting and by Fang--Fourier--Litza--Pegel \cite{FFLP} in general setting as a family relating the marked order polytope $\mathcal{O}(\Pi, \Pi^\ast, \lambda)$ with the marked chain polytope ${\mathcal C}(\Pi, \Pi^\ast, \lambda)$.
When $(\Pi, \Pi^\ast, \lambda)$ is the Gelfand--Tsetlin poset $(\Pi_A, \Pi_A^\ast, \lambda)$ of type $A$, the first named author \cite{Fuj} realized the marked chain-order polytope $\Delta_{\mathcal{C}, \mathcal{O}}(\Pi_A, \Pi_A^\ast, \lambda)$ as a Newton--Okounkov body and as a toric degeneration of a flag variety. 
After that, Makhlin \cite{Mak} realized it as a different kind of toric degeneration (Gr\"{o}bner degeneration) and a Newton--Okounkov body of the flag variety, which was extended by Makedonskyi--Makhlin \cite{MM} to the Gelfand--Tsetlin poset $(\Pi_C, \Pi_C^\ast, \lambda)$ of type $C$. 
Newton--Okounkov bodies were originally introduced by Okounkov \cite{Oko} to study multiplicity functions for representations of a reductive group, and afterward developed independently by Lazarsfeld--Musta\c{t}\u{a} \cite{LM} and by Kaveh--Khovanskii \cite{KK1, KK2} as a generalization of the notion of Newton polytopes for toric varieties. 
The theory of Newton--Okounkov bodies gives a systematic method of constructing toric degenerations and completely integrable systems (see \cite{And, HK}). 

A different family of Newton--Okounkov bodies (and associated toric degenerations) of a flag variety arises from the theory of cluster algebras (see \cite{GHKK, FO}).  
The family of Newton--Okounkov bodies is parametrized by certain cluster-theoretic data, called seeds, and these Newton--Okounkov bodies are mutually related by specific piecewise-linear transformations, called tropicalized cluster mutations (see also \cite{BCMNC, BFMNC, RW}). 
The first named author and Oya \cite{FO} realized Berenstein--Littelmann--Zelevinsky's string polytopes and Nakashima--Zelevinsky's polyhedral realizations of highest weight crystal bases as Newton--Okounkov bodies of flag varieties arising from cluster structures. 
The theory of polyptych lattices was introduced by Escobar--Harada--Manon \cite{EHM} to generalize such framework to more general piecewise-linear transformations. 
In the present paper, we study polyptych lattices consisting of transfer maps for marked chain-order polytopes, and obtain a family of toric degenerations of a projective variety to marked chain-order polytopes for a marked poset with desirable properties like Gelfand--Tsetlin posets of types $A$ and $C$.
In the case of such marked poset, the authors \cite{FH} realized transfer maps for marked chain-order polytopes as combinatorial mutations introduced by Akhtar--Coates--Galkin--Kasprzyk \cite{ACGK} in the context of mirror symmetry for Fano manifolds. 
Hence our result gives a large class of examples of polyptych lattices with desirable properties consisting of combinatorial mutations. 
Although some examples of polyptych lattices with desirable properties were studied in previous researches \cite{EHM, CEHM, Oda}, our class of examples of polyptych lattices is quite different from them.

To state our main result explicitly, let $(\Pi, \Pi^\ast, \lambda)$ be a graded finite marked poset with $\lambda = (\lambda_a)_{a \in \Pi^\ast} \in \mathbb{Z}^{\Pi^\ast}$ satisfying $\lambda_a = \lambda_b$ for all $a, b \in \Pi^\ast$ such that $r(a) = r(b)$, where $r(p)$ for $p \in \Pi$ means the length of a chain from a minimal element in $\Pi$ to $p$, which is independent of the choice of such chain. 
For a partition $\Pi \setminus \Pi^\ast = \mathcal{C} \sqcup \mathcal{O}$, we denote by $\Delta_{\mathcal{C}, \mathcal{O}} (\Pi, \Pi^\ast, \lambda) \subseteq \mathbb{R}^{\Pi \setminus \Pi^\ast}$ the corresponding marked chain-order polytope and by $\phi_{\mathcal{C}, \mathcal{O}} \colon \Delta_{\emptyset, \Pi \setminus \Pi^\ast} (\Pi, \Pi^\ast, \lambda) \rightarrow \Delta_{\mathcal{C}, \mathcal{O}} (\Pi, \Pi^\ast, \lambda)$ the transfer map (see Section \ref{ss:marked_poset_polytopes}).  
Take ${\bm u} \in \Delta_{\emptyset, \Pi \setminus \Pi^\ast} (\Pi, \Pi^\ast, \lambda) \cap \mathbb{Z}^{\Pi \setminus \Pi^\ast}$ satisfying \eqref{eq:assumption}, and set 
\[\widehat{\Delta}_{\mathcal{C}}(\Pi, \Pi^\ast, \lambda) \coloneqq \Delta_{\mathcal{C}, \mathcal{O}}(\Pi, \Pi^\ast, \lambda)  - \phi_{\mathcal{C}, \mathcal{O}}({\bm u}).\]
Then $\mu_{\mathcal{C}} (\widehat{\Delta}_{\emptyset}(\Pi, \Pi^\ast, \lambda)) = \widehat{\Delta}_{\mathcal{C}}(\Pi, \Pi^\ast, \lambda)$ for the piecewise-linear transformation $\mu_{\mathcal{C}}$ given in \eqref{eq:transfer_linear}.
Let us consider a finite polyptych lattice $\mathcal{M}$ defined by gluing $M_{\mathcal{C}} \coloneqq \mathbb{Z}^{\Pi \setminus \Pi^\ast}$ for $\mathcal{C} \in 2^{\Pi \setminus \Pi^\ast}$ via $\mu_{\mathcal{C}_1, \mathcal{C}_2} \coloneqq \mu_{\mathcal{C}_2} \circ \mu_{\mathcal{C}_1}^{-1} \colon M_{\mathcal{C}_1} \rightarrow M_{\mathcal{C}_2}$ with $\mathcal{C}_1, \mathcal{C}_2 \in 2^{\Pi \setminus \Pi^\ast}$ (see Definition \ref{d:polyptych_lattice} for the precise definition of polyptych lattices). 
For $i \in \mathbb{Z}_{\geq 0}$, we write $\Pi(i) \coloneqq \{p \in \Pi \mid r(p) = i\}$, and consider the subposet $\breve{\Pi}(i)$ of $\Pi$ obtained from $\Pi(i) \cup \Pi(i+1)$ by removing $p \in \Pi(i+1)$ such that $p \in \Pi^\ast$ or $|\{q \in \Pi(i) \mid p\ \text{covers}\ q\}| \leq 1$.  
Let $\mathscr{C}(\Pi, i)$ denote the set of \emph{connected components} of the subposet $\breve{\Pi}(i)$, where a connected component means a subset of $\breve{\Pi}(i)$ that corresponds to a connected component of its (marked) Hasse diagram. 
We assume the following condition on $(\Pi, \Pi^\ast, \lambda)$: 
\begin{enumerate}
\item[($\spadesuit$)] for each $i \in \mathbb{Z}_{\geq 0}$, every $\mathcal{C} \in \mathscr{C}(\Pi, i)$ is of the form in Figure \ref{Hasse_nonmarked} or Figure \ref{Hasse_marked} unless $\mathcal{C} \cap \Pi(i+1) = \emptyset$. 
\end{enumerate}
Then the polyptych lattice $\mathcal{M}$ has a strict dual pair in the sense of \cite[Definition 4.1]{EHM} (see Definition \ref{d:strict_dual}, Theorem \ref{t:strict_dual_pair}, and Remark \ref{r:general_case}).
Let $i \in \mathbb{Z}_{\geq 0}$, and consider $\mathcal{C} \in \mathscr{C}(\Pi, i)$ of the form in Figure \ref{Hasse_nonmarked} or Figure \ref{Hasse_marked}.
In particular, we can write $\mathcal{C} \cap \Pi(i) = \{p_1^{(\mathcal{C})}, \ldots, p_{n_\mathcal{C} +1}^{(\mathcal{C})}\}$ and $\mathcal{C} \cap \Pi(i+1) = \{q_1^{(\mathcal{C})}, \ldots, q_{n_\mathcal{C}}^{(\mathcal{C})}\}$ such that $p_k^{(\mathcal{C})}, p_{k+1}^{(\mathcal{C})} \lessdot q_k^{(\mathcal{C})}$ for $1 \leq k \leq n_\mathcal{C}$. 
Note that if $\mathcal{C}$ is of the form in Figure \ref{Hasse_nonmarked}, then there exist two such arrangements of the elements of $\mathcal{C}$. 
Hence we fix such arrangement for each $\mathcal{C}$. 
Let $\Bbbk$ be an algebraically closed field of characteristic $0$, and
consider a polynomial ring $\Bbbk [X_p, Y_p \mid p \in \Pi \setminus \Pi^\ast]$.
For $p \in \Pi \setminus \Pi^\ast$, we define $g_p \in \Bbbk [X_p, Y_p \mid p \in \Pi \setminus \Pi^\ast]$ as follows. 
If $p$ is not of the form $p_k^{(\mathcal{C})}$ for some $\mathcal{C} \in \mathscr{C}(\Pi, r(p))$ and $1 \leq k \leq n_\mathcal{C}$, then $g_p \coloneqq X_p Y_p - 1$. 
If $p = p_k^{(\mathcal{C})}$ for some $\mathcal{C} \in \mathscr{C}(\Pi, r(p))$ and $1 \leq k \leq n_\mathcal{C}$, then consider $\mathcal{C}^\prime \in \mathscr{C}(\Pi, r(p)+1)$ containing $q_k^{(\mathcal{C})}$. 
If $q_k^{(\mathcal{C})}$ is not of the form $p_k^{(\mathcal{C}^\prime)}$ for some $2 \leq k \leq n_{\mathcal{C}^\prime}+1$, then $g_p \coloneqq X_p Y_p - 1 - Y_{q_k^{(\mathcal{C})}}$. 
If $q_k^{(\mathcal{C})} = p_k^{(\mathcal{C}^\prime)}$ for some $2 \leq k \leq n_{\mathcal{C}^\prime}+1$, then $g_p \coloneqq X_p Y_p - 1 - X_{p_{k-1}^{(\mathcal{C}^\prime)}} Y_{q_k^{(\mathcal{C})}}$. 
Let 
\[\mathcal{A} \coloneqq \Bbbk [X_p, Y_p \mid p \in \Pi \setminus \Pi^\ast]/I,\]
where $I \coloneqq (g_p \mid p \in \Pi \setminus \Pi^\ast)$. 
Then $\mathcal{A}$ is a UFD and ${\rm Spec}(\mathcal{A})$ is nonsingular (see Propositions \ref{p:integral_domain}, \ref{p:nonsingular}, \ref{p:UFD} and Remark \ref{r:general_case}).
For $f \in \Bbbk [X_p, Y_p \mid p \in \Pi \setminus \Pi^\ast]$, we set $\overline{f} \coloneqq f \bmod I \in \mathcal{A}$. 
Fix a total order on the variables $\{X_p, Y_p \mid p \in \Pi \setminus \Pi^\ast\}$ such that $X_{p_1} > X_{p_2}$ and $Y_{p_1} > Y_{p_2}$ when $r(p_1) < r(p_2)$, and define a monomial order $<$ on $\Bbbk [X_p, Y_p \mid p \in \Pi \setminus \Pi^\ast]$ to be the corresponding lexicographic order.
Then $\{g_p \mid p \in \Pi \setminus \Pi^\ast\}$ forms a Gr\"{o}bner basis of $I$ with respect to the monomial order $<$ since the leading monomials of these polynomials $g_p$ ($p \in \Pi \setminus \Pi^\ast$) are relatively prime.
See \cite{CLO, Stu} for details on Gr\"{o}bner bases.
Let ${\bf B} \subseteq \Bbbk [X_p, Y_p \mid p \in \Pi \setminus \Pi^\ast]$ denote the set of standard monomials. 
In particular, $\overline{\bf B} \coloneqq \{\overline{b} \mid b \in {\bf B}\}$ forms a $\Bbbk$-basis of $\mathcal{A}$. 
By the definition of $g_p$ and $<$, it follows that 
\[{\bf B} = \left\{\prod_{p \in \Pi \setminus \Pi^\ast} X_p^{a_p} Y_p^{b_p} \mid \min\{a_p, b_p\} = 0\ \text{for all}\ p \in \Pi \setminus \Pi^\ast \right\};\]
see, for instance, \cite[Proposition 1.1]{Stu}.
Hence we obtain a bijective map 
\[{\bf B} \rightarrow \mathbb{Z}^{\Pi \setminus \Pi^\ast},\quad \prod_{p \in \Pi \setminus \Pi^\ast} X_p^{a_p} Y_p^{b_p} \mapsto (a_p -b_p)_{p \in \Pi \setminus \Pi^\ast}.\]
For $p \in \Pi \setminus \Pi^\ast$, we define $\hat{\bm e}_p \in \mathbb{Z}^{\Pi \setminus \Pi^\ast}$ as follows. 
If $p$ is not of the form $p_k^{(\mathcal{C})}$ for some $\mathcal{C} \in \mathscr{C}(\Pi, r(p))$ and $1 \leq k \leq n_\mathcal{C}+1$, then $\hat{\bm e}_p \coloneqq {\bm e}_p$, where ${\bm e}_p$ denotes the unit vector. 
If $p = p_k^{(\mathcal{C})}$ for some $\mathcal{C} \in \mathscr{C}(\Pi, r(p))$ and $1 \leq k \leq n_\mathcal{C}+1$, then $\hat{\bm e}_p \coloneqq {\bm e}_{p_1^{(\mathcal{C})}} + \cdots + {\bm e}_{p_k^{(\mathcal{C})}}$. 
For $b = \prod_{p \in \Pi \setminus \Pi^\ast} X_p^{a_p} Y_p^{b_p} \in {\bf B}$, let $m_b \in \mathcal{M}$ denote the element corresponding to $\sum_{p \in \Pi \setminus \Pi^\ast} (a_p -b_p) \hat{\bm e}_p \in M_\emptyset$. 
Then we obtain a bijective map ${\bf B} \rightarrow \mathcal{M}$, $b \mapsto m_b$.

\begin{thm}[{see Theorem \ref{t:detropicalization_type_C} and Remark \ref{r:general_case}}]\label{t:1}
The map ${\bf B} \rightarrow \mathcal{M}$ induces a valuation $\nu$ on $\mathcal{A}$ to the canonical $\mathbb{Z}_{\geq 0}$-semialgebra $S_{\mathcal{M}}$ associated with $\mathcal{M}$, which makes $\mathcal{A}$ a detropicalization of $\mathcal{M}$ with convex adapted basis $\overline{\bf B}$; see Section \ref{s:polyptych} for the precise definitions of $S_{\mathcal{M}}$, a detropicalization of $\mathcal{M}$, and a convex adapted basis for a detropicalization.
\end{thm}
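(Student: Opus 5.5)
The plan is to follow the Escobar--Harada--Manon framework directly. Recall that $S_{\mathcal{M}}$ is the semialgebra of piecewise-linear functions on the dual $\mathcal{N}$ (here: the dual polyptych lattice realized by the strict dual pair of Theorem \ref{t:strict_dual_pair}), with operations $\min$ and $+$, and that $\mathcal{M}$ embeds in $S_{\mathcal{M}}$ via $m \mapsto \langle m, \cdot\rangle$. A valuation $\nu \colon \mathcal{A}\setminus\{0\} \to S_{\mathcal{M}}$ is therefore determined by its values on the chart-wise tropicalizations. For each $\mathcal{C} \in 2^{\Pi\setminus\Pi^\ast}$ and each $n \in N_{\mathcal{C}}$, I will define $\nu_n$ on $\mathcal{A}$ by writing $f = \sum_{b \in {\bf B}} c_b \overline{b}$ (unique, by the Gröbner basis) and setting
\[
\nu_n(f) \coloneqq \min\{ \langle m_b, n\rangle \mid c_b \neq 0\}.
\]
Then $\nu(f)$ is the function $n \mapsto \nu_n(f)$. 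Adaptedness of $\overline{\bf B}$ is built into this definition, and convexity of the basis means precisely that the $\min$ is attained compatibly. The content of the theorem is therefore that each $\nu_n$ is a genuine valuation, i.e.\ $\nu_n(fg) = \nu_n(f) + \nu_n(g)$, and that $\nu$ is compatible with the identifications $\mu_{\mathcal{C}_1,\mathcal{C}_2}$.

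The key steps are as follows.

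\textbf{Step 1 (product of standard monomials).} Compute products $\overline{b}\,\overline{b'}$ in the basis $\overline{\bf B}$. Reducing $X_pY_p$ via $g_p$ replaces it by $1 + (\text{monomial in higher-rank variables})$. I will show the following: in every chart $\mathcal{C}$, and for each $n$ in the linearity cone of $\mathcal{C}$ (a chamber where $\mu_{\mathcal{C}}$ is linear), the expansion of $\overline{b}\,\overline{b'}$ has a unique term $\overline{b''}$ with $m_{b''} = m_b + m_{b'}$ in $M_{\mathcal{C}}$ (the addition taken in that chart). All other terms satisfy $\langle m_{b'''}, n\rangle \ge \langle m_{b''}, n\rangle$. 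The vectors $\hat{\bm e}_p$ are designed for exactly this: the relation $X_pY_p = 1 + X_{p_{k-1}'}Y_{q_k}$ tropicalizes to the $\min$ appearing in the transfer map $\mu_{\mathcal{C}}$ on the component $\mathcal{C}$ of Figures \ref{Hasse_nonmarked}/\ref{Hasse_marked}.

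\textbf{Step 2 (triangularity).} Use a filtration by rank. Process $p$ in increasing rank; since $g_p$ only introduces variables of strictly larger rank, the reduction terminates and gives a triangular expansion. This yields $\nu_n(\overline{b}\,\overline{b'}) = \langle m_b + m_{b'}, n\rangle$ in the appropriate chart. Multiplicativity for general $f, g$ then follows by taking initial parts: the minimal terms of $f$ and $g$ multiply to a nonzero minimal term, because the map ${\bf B} \to \mathcal{M}$ is injective and $\mathcal{A}$ is a domain (Proposition \ref{p:integral_domain}). The ultrametric inequality $\nu_n(f+g) \ge \min$ is immediate from the definition.

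\textbf{Step 3 (detropicalization axioms).} Check that the image of $\nu$ on $\overline{\bf B}$ is the full set of points $\mathcal{M} \subset S_{\mathcal{M}}$; this holds by the bijection ${\bf B} \to \mathcal{M}$. Check also that the valuation is independent of the chart, by verifying $\langle \mu_{\mathcal{C}_1,\mathcal{C}_2}(m), \mu^\ast(n)\rangle = \langle m, n\rangle$. This uses the strict dual pair property already established in Theorem \ref{t:strict_dual_pair}.

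The main obstacle is Step 1. One must show that the leading term of $\overline{b}\,\overline{b'}$ really equals the polyptych sum $m_b + m_{b'}$ in \emph{every} chart, i.e.\ for all $n$ simultaneously. This is where condition ($\spadesuit$) enters, and where chains of substitutions $X_{p_{k-1}^{(\mathcal{C}')}}Y_{q_k}$ across consecutive ranks interact. My first attempt is to reduce to a single component $\mathcal{C} \in \mathscr{C}(\Pi,i)$ at a time; the rank ordering makes the substitutions on different components commute up to higher-rank terms. For one zigzag component I will then verify the identity by induction on $n_{\mathcal{C}}$, comparing directly with the explicit formula \eqref{eq:transfer_linear} for $\mu_{\mathcal{C}}$.
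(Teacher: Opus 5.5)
Your reformulation is sound. Equality in $S_{\mathcal{M}}$ is equality of p-convex hulls, so $\bigoplus_{m\in\mathcal{S}} m$ is determined by the function $\varphi\mapsto\min_{m\in\mathcal{S}}\varphi(m)$ on ${\rm Sp}(\mathcal{M})$, and $\star$ becomes pointwise addition. Hence $\nu(\sum c_b\overline{b})\coloneqq\bigoplus m_b$, which is the paper's definition, is a valuation iff every $\varphi\circ\nu$ is an ordinary $\mathbb{Z}$-valued valuation. Condition (i) of Definition \ref{d:adapted_basis} is then automatic.

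\textbf{Step 1 is the theorem, and it is unproved.} Everything nontrivial sits in Step 1, for which you give only a plan. The claim that, for every chart $\mathcal{C}$ and every $n$ in the attached cone, $m_b+_{\mathcal{C}}m_{b'}$ occurs in $\overline{b}\,\overline{b'}$ and minimizes $\langle\cdot,n\rangle$ over its terms is equivalent to multiplicativity of $\nu$ on pairs of basis elements. So it is essentially the theorem. Your sketch also does not address the actual difficulty. Reducing $X_pY_p\mapsto 1+X_{p'}Y_q$ at rank $i$ produces rank-$(i+1)$ variables. These combine with the rank-$(i+1)$ factors of $b$ and $b'$ (e.g.\ $X_{p'}$ meets $Y_{p'}$, $Y_q$ meets $X_q$) and trigger further reductions at rank $i+2$, and so on. Meanwhile $m_b+_{\mathcal{C}}m_{b'}$ involves the minima of $\mu_{\mathcal{C}}$ at all levels simultaneously. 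Nothing you say makes these cascades ``commute up to higher-rank terms'' so that the leading term could be checked one component of $\mathscr{C}(\Pi,i)$ at a time by induction on $n_{\mathcal{C}}$.

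\textbf{The reason given in Step 2 does not suffice.} Suppose several basis elements of $f$ or $g$ attain $\min\langle\cdot,n\rangle$. Then the minimal parts of different products $\overline{b}\,\overline{b'}$ can share basis elements and cancel. That $\mathcal{A}$ is a domain says nothing about the associated graded ring of $\nu_n$. A repair is to refine $\langle\cdot,n\rangle$ by a lexicographic order coming from a basis of the full-dimensional cone attached to $\mathcal{C}$. Then Step 1, for all $n$ in that closed cone and with your uniqueness clause, makes $m_b+_{\mathcal{C}}m_{b'}$ the strict leading term of $\overline{b}\,\overline{b'}$. Additivity of $\pi_{\mathcal{C}}$ then forces a unique leading term of $fg$.

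\textbf{How the paper avoids both issues.} It proves only the single identity $\varepsilon_{i,\leq j}\star\varepsilon'_{i,\leq j}=\nu_C(\overline{1+X_{i+1,j-1}Y_{i+1,j}})$, by computing $\Upsilon(\varepsilon_{i,\leq j},\varepsilon'_{i,\leq j})$. It then inducts downward over the subalgebras $\mathcal{A}_C^{(k)}$, adjoining one pair $X_k,Y_k$ at a time. Since $\nu_C^{(k+1)}$ is already a valuation, the rewriting $\overline{X_k^aY_k^b}F=\overline{X_k^{a-c}Y_k^{b-c}(1+X_{i+1,j-1}Y_{i+1,j})^c}\,F$ yields $\nu_C^{(k)}(\overline{X_k^aY_k^b}F)=\varepsilon_{i,\leq j}^{\star a}\star(\varepsilon'_{i,\leq j})^{\star b}\star\nu_C^{(k+1)}(F)$. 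The argument of \cite[Proof of Lemma 8.10]{EHM} then upgrades this to multiplicativity of $\nu_C^{(k)}$. In this way the cascades are absorbed into the inductive hypothesis instead of being tracked by hand.

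\textbf{Smaller points.} You omit the Krull-dimension condition of Definition \ref{d:detropicalization}; the paper obtains it from \eqref{eq:detropicalization_localized}. Your chart-compatibility check in Step 3 is unnecessary, because $\langle m,n\rangle=({\mathsf w}(n))(m)$ is defined on $\mathcal{M}$ itself.
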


Assume that 
\begin{enumerate}
\item[($\dagger$)] the element ${\bm u}$ above is an interior lattice point of $\Delta_{\emptyset, \Pi \setminus \Pi^\ast} (\Pi, \Pi^\ast, \lambda)$.  
\end{enumerate}
We define a $\mathbb{Z}_{\geq 0}$-graded $\Bbbk$-subalgebra $\mathcal{A}^{\widehat{\Delta}}$ of the polynomial ring $\mathcal{A}[t]$ with coefficient $\mathcal{A}$ by 
\[\mathcal{A}^{\widehat{\Delta}} \coloneqq \bigoplus_{k \in \mathbb{Z}_{\geq 0}} {\rm Span}_{\Bbbk}\{b \cdot t^k \mid \pi_\emptyset (m_b) \in k \widehat{\Delta}_{\emptyset}(\Pi, \Pi^\ast, \lambda)\},\]
where $\pi_\emptyset (m_b) \in M_\emptyset$ is the element corresponding to $m_b \in \mathcal{M}$.
As an application of Theorem \ref{t:1}, we prove that $\mathcal{M}, \mathcal{A}, {\bf B}$, and $\mathcal{A}^{\widehat{\Delta}} $ fit well into the general theory of polyptych lattices developed in \cite{EHM}, which shows the following theorem.

\begin{thm}[{see Theorem \ref{t:NO_degeneration_type_C} and Remark \ref{r:general_case}}]\label{t:2}
The following hold.
\begin{enumerate}
\item[(1)] The scheme $X_{\mathcal{A}}(\widehat{\Delta}) \coloneqq {\rm Proj}(\mathcal{A}^{\widehat{\Delta}})$ is a normal projective variety over $\Bbbk$ containing ${\rm Spec}(\mathcal{A})$ as an open subvariety.
\item[(2)] For each $\mathcal{C} \in 2^{\Pi \setminus \Pi^\ast}$, the polytope $\widehat{\Delta}_{\mathcal{C}}(\Pi, \Pi^\ast, \lambda)$ is a Newton--Okounkov body of $X_{\mathcal{A}}(\widehat{\Delta})$, and there exists a flat (toric) degeneration of $X_{\mathcal{A}} (\widehat{\Delta})$ to the normal projective toric variety corresponding to the integral polytope $\widehat{\Delta}_{\mathcal{C}}(\Pi, \Pi^\ast, \lambda)$.  
\end{enumerate}
\end{thm}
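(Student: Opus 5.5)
The plan is to deduce Theorem \ref{t:2} from Theorem \ref{t:1} by checking the hypotheses of the general results of Escobar--Harada--Manon \cite{EHM} on detropicalizations with convex adapted bases. Those results attach to a point-convex polytope $P \subseteq \mathcal{M}_{\mathbb{R}}$ the graded algebra $\mathcal{A}^{P}$. They show that $\mathrm{Proj}(\mathcal{A}^P)$ is a projective variety admitting, for each chart $\pi_{\mathcal{C}}$, a Newton--Okounkov body $\pi_{\mathcal{C}}(P)$ and a flat degeneration to the toric variety of $\pi_{\mathcal{C}}(P)$. So the work consists of three checks:
\begin{enumerate}
\item[(a)] $\widehat{\Delta} \coloneqq \{\widehat{\Delta}_{\mathcal{C}}(\Pi,\Pi^\ast,\lambda)\}_{\mathcal{C}}$ is a well-defined polytope in $\mathcal{M}_{\mathbb{R}}$ in the sense of \cite{EHM}, and it is point-convex (or PL-convex) with respect to the strict dual pair of Theorem \ref{t:strict_dual_pair};
\item[(b)] the hypotheses needed to get normality, containment of $\mathrm{Spec}(\mathcal{A})$ as an open subset, and integrality of the polytopes all hold;
\item[(c)] the chart images are exactly the shifted marked chain-order polytopes.
\end{enumerate}

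For (a) I would use the identity $\mu_{\mathcal{C}}(\widehat{\Delta}_\emptyset)=\widehat{\Delta}_{\mathcal{C}}$ recalled before Theorem \ref{t:1}. It shows that the collection $\{\widehat{\Delta}_{\mathcal{C}}\}$ is compatible with the gluing maps $\mu_{\mathcal{C}_1,\mathcal{C}_2}$, so it defines one subset of $\mathcal{M}_{\mathbb{R}}$, and each chart image is an honest convex lattice polytope. This is because every marked chain-order polytope is convex with integral vertices for a marked poset with integral marking of this graded type (by \cite{FFLP}). To get point-convexity, I would write $\widehat{\Delta}$ as an intersection of half-spaces $\{m \mid p(m) \geq -c_p\}$ with $p$ ranging over the elements of the dual polyptych lattice $\mathcal{N}$ of the strict dual pair. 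The natural choice is to take $p$ corresponding to the facet inequalities of $\Delta_{\emptyset,\Pi\setminus\Pi^\ast}$, namely the covering relations $x_a \leq x_b$ and the marked bounds, and to check that each is a piecewise-linear function on $\mathcal{M}$ that is "min of linear" in every chart. This is the step where the explicit shape of $\mu_{\mathcal{C}}$ from \eqref{eq:transfer_linear}, together with the structure condition ($\spadesuit$), is used: the order inequalities transform into chain (sum-along-chain) inequalities.

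For (b), the assumption ($\dagger$) that ${\bm u}$ is interior gives $0$ in the interior of every $\widehat{\Delta}_{\mathcal{C}}$. Hence $\bigcup_k k\widehat{\Delta}$ exhausts $\mathcal{M}$. Consequently the degree-one element $t$ (corresponding to $b=1$) lies in $\mathcal{A}^{\widehat{\Delta}}$, and $\mathcal{A}^{\widehat{\Delta}}[t^{-1}]_0=\mathcal{A}$, so $\mathrm{Spec}(\mathcal{A})=D_+(t)$ is an open subvariety. Finite generation and the fact that $\mathcal{A}^{\widehat{\Delta}}$ is a domain follow from the adapted-basis property, since $\nu(bb')$ is controlled by the semialgebra $S_{\mathcal{M}}$. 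Normality comes from \cite{EHM}, using that $\mathcal{A}$ is normal (it is a UFD by Proposition \ref{p:UFD}) and that the associated graded algebras in each chart are the semigroup algebras of the cones over integral polytopes. Those polytopes are normal, because marked chain-order polytopes of such posets have the integer decomposition property (again \cite{FFLP}).

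For (c), I would take the valuation $\nu_{\mathcal{C}}=\pi_{\mathcal{C}}\circ\nu$ composed with a total order. Its value semigroup in degree $k$ is $\pi_{\mathcal{C}}(k\widehat{\Delta})\cap\mathbb{Z}^{\Pi\setminus\Pi^\ast}=k\widehat{\Delta}_{\mathcal{C}}\cap\mathbb{Z}^{\Pi\setminus\Pi^\ast}$. This holds because $\overline{\bf B}$ is adapted and $b\mapsto m_b$ is bijective. Finite generation then gives the Newton--Okounkov body, and the Rees-algebra construction gives the flat toric degeneration. I expect the main obstacle to be the point-convexity in (a), namely exhibiting the facet functionals as elements of the dual lattice $\mathcal{N}$ that are compatible across all $2^{|\Pi\setminus\Pi^\ast|}$ charts.
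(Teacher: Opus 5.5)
Your overall architecture matches the paper's. It combines Proposition \ref{p:PL_description}, the strict dual pair, the detropicalization with convex adapted basis, ($\dagger$), normality of ${\rm Spec}(\mathcal{A})$ and normality of the polytope, and then applies the general results of \cite[Section 7]{EHM}. (The paper takes polytope normality from \cite[Theorem 18]{FFP}, not \cite{FFLP}.) Your direct argument $D_+(t)\simeq{\rm Spec}(\mathcal{A})$ is fine.

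\textbf{The gap is in step (a).} The functionals you propose, one per covering relation, are not points of $\mathcal{M}$. A point is superadditive, hence concave, in every chart. If $b$ has a lower cover other than $a$, then in the chart $\{b\}$ one has $x_b = y_b + \max(\{y_q \mid q \lessdot b,\ q\notin\Pi^\ast\}\cup\{0 \mid q\lessdot b,\ q\in\Pi^\ast\})$. So $x_b - x_a$ is convex and non-linear there, hence not concave.

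Concretely, in type $A_2$ the element $q_{3,1}$ covers $q_{2,1}$ and a marked element. Take $m_1=\pi_\emptyset^{-1}({\bm e}_{q_{2,1}})$ and $m_2=\pi_\emptyset^{-1}(-{\bm e}_{q_{2,1}})$. Then $m_1+_\emptyset m_2=0$, but $m_1+_{\{q_{3,1}\}}m_2=\pi_\emptyset^{-1}(-{\bm e}_{q_{3,1}})$. For $f=x_{3,1}-x_{2,1}$ this gives $\min_{\mathcal{C}} f(m_1+_{\mathcal{C}}m_2)\le -1$, whereas $f(m_1)+f(m_2)=0$. The marked lower bound $x_{3,1}$ on its own fails in the same way.

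The missing idea is to bundle all lower covers of an unmarked $p$ into a single functional, $\varphi_p(m)=x_p+\min(\{-x_q\mid q\lessdot p,\ q\notin\Pi^\ast\}\cup\{0\mid q\lessdot p,\ q\in\Pi^\ast\})$. This is the $p$-th coordinate in the chart $\{p\}$, and it is a point. Together with $\varphi_{p,p^\prime}(m)=-x_{p^\prime}$ for $p\in\Pi^\ast$ and $p^\prime\lessdot p$, these functionals give Proposition \ref{p:PL_description}.

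A single half-space $\varphi_p\ge a_p$ encodes all the inequalities $x_p+u_p\ge x_q+u_q$ for $q\lessdot p$ at once. This works only because \eqref{eq:assumption} makes $a_p=u_q-u_p$ independent of $q$, whether $q$ is marked or not. That is the real reason for translating by ${\bm u}$. Neither ($\spadesuit$) nor the passage to chain inequalities plays any role here. Since $\pi_\emptyset(\widehat{\Delta})=\widehat{\mathcal{O}}$, the whole verification happens in the chart $\emptyset$, for any graded marked poset with the marking condition.

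\textbf{A smaller issue is in (c):} the total order on $M_{\mathcal{C}}$ cannot be arbitrary. For $q_{i,j}\in\widehat{\Pi}_C^\circ$ one has $\overline{X_{i,j}}\,\overline{Y_{i,j}}=1+\overline{X_{i+1,j-1}Y_{i+1,j}}$. The two basis terms have $\pi_\emptyset$-values $0$ and $-{\bm e}_{q_{i+1,j}}$, while $\pi_\emptyset(\nu_C(\overline{X_{i,j}}))+\pi_\emptyset(\nu_C(\overline{Y_{i,j}}))=0$. So in the chart $\emptyset$ the order must select $0$. In a chart containing $q_{i+1,j}$ it must instead select the other term, because there $\varepsilon_{i,\le j}+_{\mathcal{C}}\varepsilon^\prime_{i,\le j}$ is that term. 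An order chosen without regard to the PL structure destroys multiplicativity.

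The correct choice, as in the paper, is the order induced by a $\mathbb{Z}$-basis $\tilde{\rho}$ of points linear on the chart $\mathcal{C}$, that is, $\tilde{\rho}\subseteq\sigma^\prime_{{\bm\epsilon}(\mathcal{C})}={\mathsf w}^{-1}({\rm Sp}_{\mathbb{R}}(\mathcal{M}_C,\mathcal{C}))$. This is where condition (iv) of the strict dual pair is used. With that order, your identification of the value semigroup with the lattice points of the cone over $\widehat{\Delta}_{\mathcal{C}}$, and hence the Newton--Okounkov body and the toric degeneration, goes through.
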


Let $U(\mathcal{M}, i)$ be the number of $\mathcal{C} \in \mathscr{C}(\Pi, i)$ of the form in Figure \ref{Hasse_nonmarked}, and 
\begin{equation}\label{eq:rank_of_units}
\begin{aligned}
U_{\mathcal{M}} \coloneqq \sum_{i \in \mathbb{Z}_{\geq 0}} U(\mathcal{M}, i).
\end{aligned}
\end{equation} 
Then the unit group $\mathcal{A}^\times$ is isomorphic to $\Bbbk^\times \times \mathbb{Z}^{U_{\mathcal{M}}}$ (see Proposition \ref{p:unit_elements} and Remark \ref{r:general_case}).
We set 
\begin{equation}\label{eq:number_of_divisors}
\begin{aligned}
L_{\mathcal{M}} \coloneqq |\Pi \setminus \Pi^\ast| + |\{(p,p^\prime) \in \Pi^\ast \times (\Pi \setminus \Pi^\ast) \mid p\ \text{covers}\ p^\prime\}|;
\end{aligned}
\end{equation}
cf.\ Proposition \ref{p:PL_description}.
Using the arguments in \cite[Construction 1.2.4.1]{ADHL}, \cite[Theorem 7.19]{EHM}, and \cite[Section 7]{CEHM}, we compute the Cox ring of $X_{\mathcal{A}} (\widehat{\Delta})$, and obtain the following. 

\begin{thm}[{see Theorem \ref{t:Cox_type_C} and Remark \ref{r:general_case}}]\label{t:3}
The Cox ring of $X_{\mathcal{A}} (\widehat{\Delta})$ is isomorphic to the polynomial ring over $\Bbbk$ in $|\Pi \setminus \Pi^\ast| - U_{\mathcal{M}} + L_{\mathcal{M}}$ variables. 
\end{thm}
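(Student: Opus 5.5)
We plan to follow the strategy of \cite[Construction 1.2.4.1]{ADHL}, adapted to polyptych lattices as in \cite[Theorem 7.19]{EHM} and \cite[Section 7]{CEHM}. Write $X \coloneqq X_{\mathcal{A}}(\widehat{\Delta})$. By Theorem \ref{t:2}(1), $X$ is a normal projective variety containing $U \coloneqq {\rm Spec}(\mathcal{A})$ as an open subvariety. Let $D_1, \ldots, D_L$ be the prime divisors of the boundary $X \setminus U$.

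\textbf{Step 1: the boundary divisors.} Via the valuation $\nu$ of Theorem \ref{t:1}, these $D_j$ correspond to the facets of $\widehat{\Delta}$, that is, to the points of the dual polyptych lattice (from the strict dual pair) defining the facet inequalities. By the piecewise-linear description (Proposition \ref{p:PL_description}), these facets are indexed by the covering relations in $\Pi$ not internal to $\Pi^\ast$:
\begin{itemize}
\item the relations $p \lessdot p'$ with $p, p' \in \Pi \setminus \Pi^\ast$, or
\item the relations between a non-marked element and a marked one.
\end{itemize}
We will check that after merging duplicates their number is exactly $L_{\mathcal{M}}$ as in \eqref{eq:number_of_divisors}. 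The valuation of each facet on $\overline{\bf B}$ is the corresponding tropical point $\min$-evaluated on $m_b$. I expect this identification to be the main obstacle: one must verify that every facet of $\widehat{\Delta}_\emptyset$ yields a genuine boundary divisor, and that the count matches $L_{\mathcal{M}}$. I would first try to carry this out chart-by-chart using the transfer maps $\mu_{\mathcal{C}}$ and the dual pair.

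\textbf{Step 2: reduce to $U$.} Since $\mathcal{A}$ is a UFD (Proposition \ref{p:UFD}), $\mathrm{Cl}(U) = 0$. Hence $\mathrm{Cl}(X)$ is generated by the classes $[D_j]$, and we get the exact sequence
\[ 0 \to \mathcal{A}^\times/\Bbbk^\times \to \mathbb{Z}^{L} \to \mathrm{Cl}(X) \to 0. \]
Here the first map is $f \mapsto {\rm div}(f)|_{\partial}$, and it is injective because $X$ is projective. By Proposition \ref{p:unit_elements}, $\mathcal{A}^\times/\Bbbk^\times \cong \mathbb{Z}^{U_{\mathcal{M}}}$. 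Moreover, $\mathrm{Cl}(X)$ is torsion-free, which we would verify from the explicit unit generators and their divisors, i.e. the saturation of their image.

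\textbf{Step 3: describe the Cox ring.} By \cite[Construction 1.2.4.1]{ADHL}, the Cox ring is the subring of $\mathcal{A}[T_1^{\pm1}, \ldots, T_L^{\pm1}]$ spanned by the elements $f\,T^{a}$ with ${\rm div}(f) + \sum a_j D_j \ge 0$, taken modulo the unit identification. Equivalently, as in \cite[Theorem 7.19]{EHM}, it is
\[ \mathcal{R} = \mathcal{A}[T_1,\ldots,T_L] \ \text{restricted by valuation conditions}. \]
Using the adapted basis $\overline{\bf B}$, we show $\mathcal{R}$ is generated by:
\begin{itemize}
\item the $T_j$;
\item lifts $\tilde f = f\,T^{-\mathrm{ord}(f)}$ of a generating set of $\mathcal{A}$, namely the $\overline{X_p}, \overline{Y_p}$.
\end{itemize}
This uses that every element of $\overline{\bf B}$ is a product of these generators with valuations adding, which is the convexity of the adapted basis.

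\textbf{Step 4: identify the polynomial ring.} The relations $g_p$ homogenize to relations that express, for each $p$, one generator polynomially in the others. Concretely, the lift of $X_pY_p$ equals a product of $T_j$'s plus lower terms. This lets us eliminate one of $\tilde X_p, \tilde Y_p$ for each $p \in \Pi \setminus \Pi^\ast$, leaving $|\Pi \setminus \Pi^\ast|$ generators. The $U_{\mathcal{M}}$ units become monomials in the $T_j$, and after quotienting by the unit relations they remove $U_{\mathcal{M}}$ further variables. The result is a polynomial ring in
\[ |\Pi\setminus\Pi^\ast| - U_{\mathcal{M}} + L_{\mathcal{M}} \]
variables. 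To see that no relations remain, we compare dimensions:
\[ \dim \mathcal{R} = \dim X + \mathrm{rk}\,\mathrm{Cl}(X) = |\Pi\setminus\Pi^\ast| + L_{\mathcal{M}} - U_{\mathcal{M}}. \]
This equals the number of generators, so the surjection from the polynomial ring onto the integral domain $\mathcal{R}$ is an isomorphism.
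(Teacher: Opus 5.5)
\textbf{Assessment.} Your framework is the paper's: the ADHL construction with the CEHM description of $H^0(X,\mathcal{S})$, the UFD property of $\mathcal{A}$, and Proposition \ref{p:unit_elements} for the ideal $\mathcal{J}$. Two of your ingredients are cleaner than what the paper writes.
\begin{itemize}
\item Each ${\rm ord}_{D^{(k)}}$ is additive on products, so $\overline{b}\,T^{-{\rm ord}(\overline{b})}=\prod_p \tilde X_p^{a_p}\tilde Y_p^{b_p}$. Generation by the $T_k$ and the lifts is therefore immediate, without the unimodular cone computation of Proposition \ref{p:generator_Cox}.
\item Your count $\dim {\rm Cox}(X)=\dim X+{\rm rk}\,{\rm Cl}(X)=|\Pi\setminus\Pi^\ast|+L_{\mathcal M}-U_{\mathcal M}$ is a valid way to exclude further relations. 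The paper leaves this point implicit.
\end{itemize}
Step 1 is not really an obstacle: it is Theorem \ref{t:compactification}, i.e.\ Theorem 7.11 of EHM applied to Proposition \ref{p:PL_description}. The boundary divisors correspond to the PL half-spaces $\mathcal{H}_{\varphi_p,a_p}$ and $\mathcal{H}_{\varphi_{p,p'},a_{p,p'}}$. So all covering relations $q\lessdot p$ with $p$ non-marked give the single divisor $D_p$, and counting facets of a chart polytope overcounts. Torsion-freeness of ${\rm Cl}(X)$ is not needed.

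\textbf{The gap: the elimination in Step 4.} For a non-unit $p$ (one with $g_p\neq X_pY_p-1$), the lifted relation does not let you solve for $\tilde X_p$ or $\tilde Y_p$. Write ${\rm ord}(f)=({\rm ord}_{D^{(k)}}(f))_k$. In type $C$, for $q_{i,j}\in\widehat{\Pi}_C^\circ$, one has ${\rm ord}(\overline{X_{i,j}})+{\rm ord}(\overline{Y_{i,j}})=-{\bm e}_{r_{i+1,j}}$. Hence the lifted relation is the paper's
\[\tilde X_{i,j}\,\tilde Y_{i,j}=T_{q_{i+1,j}}+\tilde X_{i+1,j-1}\,\tilde Y_{i+1,j}.\]
When $q_{i+1,j}\notin\widehat{\Pi}_C^\circ$, read this modulo $\mathcal{J}$ with $\tilde Y_{i+1,j}$ replaced by $T_{q_{i+2,j},q_{i+1,j}}$.

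In this relation $\tilde X_{i,j}$ and $\tilde Y_{i,j}$ occur only through their product, so neither can be solved for. What the relation eliminates is the variable $T_{q_{i+1,j}}$ of the upper cover, which occurs linearly with coefficient $1$. Indeed, in the final answer both lifts of every non-unit element survive as free variables ($W_q$ and $Z_q$ in Theorem \ref{t:Cox_type_C}), while the $T_{q_{i+1,j}}$ disappear.

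\textbf{The correct bookkeeping} is as follows.
\begin{itemize}
\item The $U_{\mathcal M}$ unit lifts $\tilde X_p=\tilde Y_p^{-1}$ (the $\eta_s$) both become $1$ modulo $\mathcal{J}$. This removes $2U_{\mathcal M}$ generators.
\item The remaining $|\Pi\setminus\Pi^\ast|-U_{\mathcal M}$ relations each remove one variable $T_q$, where $q=q_k^{(\mathcal C)}$ is the upper cover appearing in $g_p$. These variables are pairwise distinct, since $p\mapsto q_k^{(\mathcal C)}$ is injective.
\end{itemize}
This leaves
\[L_{\mathcal M}+2|\Pi\setminus\Pi^\ast|-2U_{\mathcal M}-(|\Pi\setminus\Pi^\ast|-U_{\mathcal M})=|\Pi\setminus\Pi^\ast|-U_{\mathcal M}+L_{\mathcal M}\]
generators. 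This is the same number you obtained, but by a different mechanism. Once your generating set is corrected to the non-unit lifts together with the surviving $T_k$, your dimension argument does complete the proof.
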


The present paper is organized as follows. 
In Section 2, we recall basic definitions on polyptych lattices. 
In Section 3, we define polyptych lattices consisting of transfer maps for marked chain-order polytopes, and describe marked chain-order polytopes as PL polytopes. 
The spaces of points are also described in this section for some specific marked posets. 
Section 4 is devoted to studying polyptych lattices consisting of transfer maps for Gelfand--Tsetlin posets of type $C$. 
The proofs of our main results are also given in this section. 
In Section 5, we describe our main results in the case of Gelfand--Tsetlin posets of type $A$. 

\begin{ack}\normalfont
The authors are grateful to Megumi Harada for helpful comments and fruitful discussions.
In particular, she suggested the authors to compute the Cox rings of the compactifications.  
\end{ack}

\section{Basic definitions on polyptych lattices}\label{s:polyptych}

In this section, we review some basic definitions on polyptych lattices, following \cite{EHM}. 
A \emph{lattice} is a free $\mathbb{Z}$-module of finite rank.

\begin{defi}[{see \cite[Definition 2.1]{EHM}}]\label{d:polyptych_lattice}
For $r \in \mathbb{Z}_{>0}$ and a set $\mathcal{I}$, a \emph{polyptych lattice} $\mathcal{M} = (\{M_\alpha\}_{\alpha \in \mathcal{I}}, \{\mu_{\alpha,\beta} \colon M_\alpha \rightarrow M_\beta\}_{\alpha, \beta \in \mathcal{I}})$ of rank $r$ (over $\mathbb{Z}$) is a pair of 
\begin{itemize}
\item a collection $\{M_\alpha\}_{\alpha \in \mathcal{I}}$ of rank $r$ lattices indexed by $\mathcal{I}$, and 
\item a collection $\{\mu_{\alpha,\beta} \colon M_\alpha \rightarrow M_\beta\}_{\alpha, \beta \in \mathcal{I}}$ of bijective piecewise-linear maps $\mu_{\alpha,\beta} \colon M_\alpha \rightarrow M_\beta$ between every pair $(M_\alpha, M_\beta)$ of lattices
\end{itemize}
such that 
\begin{itemize}
\item $\mu_{\alpha, \alpha}$ is the identity map ${\rm id}_{M_\alpha}$ on $M_\alpha$ for all $\alpha \in \mathcal{I}$, 
\item $\mu_{\alpha,\beta} = \mu_{\beta,\alpha}^{-1}$ for all $\alpha, \beta \in \mathcal{I}$, and 
\item $\mu_{\beta,\gamma} \circ \mu_{\alpha,\beta} = \mu_{\alpha,\gamma}$ for all $\alpha, \beta, \gamma \in \mathcal{I}$.
\end{itemize}
Each $\mu_{\alpha,\beta}$ is called a \emph{mutation}, and $M_\alpha$ is said to be the $\alpha$-th chart of $\mathcal{M}$. 
We write $\pi(\mathcal{M}) \coloneqq \mathcal{I}$. 
\end{defi}

In this paper, we consider only the case that $\mathcal{I}$ is a finite set. 
In this case, $\mathcal{M}$ is called a \emph{finite polyptych lattice}.
The polyptych lattice $\mathcal{M}$ can be identified with the quotient space
\[\mathcal{M} = \bigsqcup_{\alpha \in \mathcal{I}} M_\alpha \bigg/ \sim,\]
where $m_{\alpha} \sim m_{\beta}$ for $m_{\alpha} \in M_{\alpha}$, $m_{\beta} \in M_{\beta}$ if and only if $m_{\beta} = \mu_{\alpha, \beta} (m_{\alpha})$. 
An \emph{element} $m \in \mathcal{M}$ is an equivalence class in this quotient space. 
Since $\mu_{\alpha, \beta}$ is bijective, an element $m \in \mathcal{M}$ has a unique representative $m_\alpha \in M_\alpha$ called the \emph{$\alpha$-th coordinate} of $m$ (see \cite[Definition 2.3]{EHM}).
For $\alpha \in \mathcal{I}$, define $\pi_\alpha \colon \mathcal{M} \rightarrow M_\alpha$ by $\pi_\alpha (m) \coloneqq m_\alpha$. 
For $m_1, m_2 \in \mathcal{M}$, we denote by $m_1 +_\alpha m_2$ the sum of $m_1, m_2$ in the lattice $M_\alpha$, that is, 
\[m_1 +_\alpha m_2 \coloneqq \pi_\alpha^{-1} (\pi_\alpha(m_1) + \pi_\alpha (m_2)).\]

\begin{defi}[{\cite[Definition 3.1]{EHM}}]
A \emph{point} of $\mathcal{M}$ is a function $p \colon \mathcal{M} \rightarrow \mathbb{Z}$ such that 
\[p(m_1) + p(m_2) = \min \{p(m_1 +_\alpha m_2) \mid \alpha \in \mathcal{I}\}\]
for all $m_1, m_2 \in \mathcal{M}$ and such that 
\[p(\lambda m) = \lambda p(m)\]
for all $m \in \mathcal{M}$ and $\lambda \in \mathbb{Z}_{\geq 0}$. 
The \emph{space ${\rm Sp}(\mathcal{M})$ of points} of $\mathcal{M}$ is defined to be the set of points $p \colon \mathcal{M} \rightarrow \mathbb{Z}$ of $\mathcal{M}$. 
An \emph{${\mathbb{R}}$-point} $p \colon \mathcal{M} \rightarrow \mathbb{R}$ of $\mathcal{M}$ and the \emph{space ${\rm Sp}_{\mathbb R}(\mathcal{M}) (\supseteq {\rm Sp}(\mathcal{M}))$ of ${\mathbb{R}}$-points} of $\mathcal{M}$ are similarly defined. 
\end{defi}

We define $\mathcal{M}_{\mathbb{R}}$ by replacing $M_\alpha = \mathbb{Z}^r$ with $(M_\alpha)_\mathbb{R} = M_\alpha \otimes_{\mathbb{Z}} \mathbb{R} = \mathbb{R}^r$ in the definition of $\mathcal{M}$. 
Note that each $\mu_{\alpha,\beta} \colon M_\alpha \rightarrow M_\beta$ is naturally extended to a bijective piecewise-linear map $\mu_{\alpha,\beta} \colon (M_\alpha)_\mathbb{R} \rightarrow (M_\beta)_\mathbb{R}$.
Then there exists a minimum fan $\Sigma(\mathcal{M},\mu_{\alpha,\beta})$ in $(M_\alpha)_\mathbb{R}$ such that $\mu_{\alpha,\beta}|_C$ is $\mathbb{R}$-linear for each $C \in \Sigma(\mathcal{M},\mu_{\alpha,\beta})$. 
Let $V(\mathcal{M},\mu_{\alpha,\beta})$ be the linearity space of the fan $\Sigma(\mathcal{M},\mu_{\alpha,\beta})$, that is, $V(\mathcal{M},\mu_{\alpha,\beta})$ is the $\mathbb{R}$-linear subspace of $(M_\alpha)_\mathbb{R}$ determined by the condition that $m_1 + m_2 \in C$ for all $C \in \Sigma(\mathcal{M},\mu_{\alpha,\beta})$, $m_1 \in C$, and $m_2 \in V(\mathcal{M},\mu_{\alpha,\beta})$. 
Denote by $\Sigma(\mathcal{M},\alpha)$ the fan in $(M_\alpha)_\mathbb{R}$ given as the common refinement of $\Sigma(\mathcal{M},\mu_{\alpha,\beta})$ for all $\beta \in \mathcal{I}$. 
Following \cite[Section 2]{EHM}, the \emph{PL fan} $\Sigma(\mathcal{M})$ of $\mathcal{M}$ is defined by 
\[\Sigma(\mathcal{M}) = \{\pi_\alpha^{-1} (C) \mid C \in \Sigma(\mathcal{M},\alpha)\}.\]
By \cite[Lemma 2.10 (d)]{EHM}, this is independent of the choice of $\alpha$. 
In particular, for all $\beta_1, \beta_2 \in \mathcal{I}$, $\mu_{\beta_1,\beta_2}|_{\pi_{\beta_1}(\mathcal{C})}$ is $\mathbb{R}$-linear for each $\mathcal{C} \in \Sigma(\mathcal{M})$.
Let $V(\mathcal{M}, \alpha) \coloneqq \bigcap_{\beta \in \mathcal{I}} V(\mathcal{M},\mu_{\alpha,\beta}) \subseteq (M_\alpha)_\mathbb{R}$, which is the linearity space of $\Sigma(\mathcal{M},\alpha)$.
We can naturally extend every $\mathbb{R}$-point $p \in {\rm Sp}_{\mathbb R}(\mathcal{M})$ to a function $\mathcal{M}_\mathbb{R} \rightarrow \mathbb{R}$, which is also written as $p$ by abuse of notation.
For $\alpha \in \mathcal{I}$, let ${\rm Sp}_{\mathbb R}(\mathcal{M}, \alpha)$ denote the set of $p \in {\rm Sp}_{\mathbb R}(\mathcal{M})$ such that $p \circ \pi_\alpha^{-1} \colon (M_\alpha)_\mathbb{R} \rightarrow \mathbb{R}$ is $\mathbb{R}$-linear, where $\pi_\alpha \colon \mathcal{M}_{\mathbb{R}} \rightarrow (M_\alpha)_{\mathbb{R}}$ is defined in a way similar to $\pi_\alpha \colon \mathcal{M} \rightarrow M_\alpha$. 

\begin{defi}[{\cite[Definition 4.1]{EHM}}]\label{d:strict_dual}
Let $\mathcal{N}$ be another finite polyptych lattice of rank $r$ (over $\mathbb{Z}$).
Then a pair of maps ${\mathsf v} \colon \mathcal{M}_\mathbb{R} \rightarrow {\rm Sp}_\mathbb{R}(\mathcal{N})$ and ${\mathsf w} \colon \mathcal{N}_\mathbb{R} \rightarrow {\rm Sp}_\mathbb{R}(\mathcal{M})$ is called a \emph{strict dual pairing} if the following conditions hold:
\begin{enumerate}
\item[(i)] ${\mathsf v} (\mathcal{M}) \subseteq {\rm Sp}(\mathcal{N})$ and ${\mathsf w} (\mathcal{N}) \subseteq {\rm Sp}(\mathcal{M})$;
\item[(ii)] $({\mathsf v} (m)) (n) = ({\mathsf w} (n)) (m)$ for all $m \in \mathcal{M}$ and $n \in \mathcal{N}$;
\item[(iii)] ${\mathsf v} \colon \mathcal{M} \rightarrow {\rm Sp}(\mathcal{N})$ and ${\mathsf w} \colon \mathcal{N} \rightarrow {\rm Sp}(\mathcal{M})$ are both bijective;
\item[(iv)] $\gamma \mapsto {\mathsf v}^{-1} ({\rm Sp}_\mathbb{R}(\mathcal{N}, \gamma))$ (respectively, $\alpha \mapsto {\mathsf w}^{-1} ({\rm Sp}_\mathbb{R}(\mathcal{M}, \alpha))$) gives a bijective map from $\pi(\mathcal{N})$ (respectively, $\pi(\mathcal{M})$) to the set of $r$-dimensional cones in $\Sigma(\mathcal{M})$ (respectively, $\Sigma(\mathcal{N})$).
\end{enumerate}
In this case, we call $(\mathcal{M}, \mathcal{N}, {\mathsf v}, {\mathsf w})$ (or simply $(\mathcal{M}, \mathcal{N})$) a \emph{strict dual pair} of polyptych lattices.
\end{defi}

For $\varphi \in {\rm Sp}_{\mathbb{R}}(\mathcal{M})$ and $a \in \mathbb{R}$, we set 
\[\mathcal{H}_{\varphi, a} \coloneqq \{m \in \mathcal{M}_\mathbb{R} \mid \varphi(m) \geq a\}. \]
Following \cite[Definition 3.22]{EHM}, we define the \emph{point-convex hull} $\text{p-conv}_\mathbb{Z} (S)$ of $\mathcal{S} \subseteq \mathcal{M}$ over $\mathbb{Z}$ by 
\[\text{p-conv}_\mathbb{Z} (S) \coloneqq \bigcap_{\substack{\varphi  \in {\rm Sp}(\mathcal{M}), a \in \mathbb{Z};\\ S \subseteq \mathcal{H}_{\varphi, a}}} \mathcal{H}_{\varphi, a}.\]
Similarly, $\text{p-conv}_\mathbb{R} (S)$ is defined by replacing $\varphi \in {\rm Sp}(\mathcal{M})$ and $a \in \mathbb{Z}$ with $\varphi \in {\rm Sp}_\mathbb{R}(\mathcal{M})$ and $a \in \mathbb{R}$. 
Let $S_{\mathcal{M}}$ be the canonical $\mathbb{Z}_{\geq 0}$-semialgebra defined in \cite[Definition 3.35]{EHM} with additive operation $\oplus$ and with product operation $\star$, which has the additive identity $\infty$ and the multiplicative identity $0$.
Every element of $S_{\mathcal{M}} \setminus \{\infty\}$ can be written as $\bigoplus_{m \in \mathcal{S}} m$ for some finite subset $\mathcal{S} \subseteq \mathcal{M}$, and we have $\bigoplus_{m \in \mathcal{S}} m = \bigoplus_{m^\prime \in \mathcal{S}^\prime} m^\prime$ for finite subsets $\mathcal{S}, \mathcal{S}^\prime \subseteq \mathcal{M}$ if and only if $\text{p-conv}_\mathbb{Z} (\mathcal{S}) = \text{p-conv}_\mathbb{Z} (\mathcal{S}^\prime)$. 
In addition, for $m_1, m_2 \in \mathcal{M}$, we have 
\[m_1 \star m_2 \coloneqq \bigoplus_{m \in \Upsilon(m_1, m_2)} m,\]
where $\Upsilon(m_1, m_2) \coloneqq \{m_1 +_\alpha m_2 \mid \alpha \in \pi(\mathcal{M})\}$; see \cite[Definition 3.31]{EHM}. 
The semialgebra $S_{\mathcal{M}}$ is \emph{idempotent} in the sense that $a \oplus a = a$ for all $a \in S_{\mathcal{M}}$. 
Hence it is equipped with a partial order defined by $a \leq b$ if and only if $a \oplus b = a$ for $a, b \in S_{\mathcal{M}}$. 
Let $\Bbbk$ be a field, and $\Bbbk^\times \coloneqq \Bbbk \setminus \{0\}$.

\begin{defi}[{\cite[Definition 6.3]{EHM}}]\label{d:detropicalization}
A \emph{detropicalization} $(\mathcal{A}, \nu)$ of $\mathcal{M}$ is a pair of 
\begin{itemize}
\item a Noetherian $\Bbbk$-algebra $\mathcal{A}$ that is an integral domain, and
\item a valuation $\nu \colon \mathcal{A} \rightarrow S_{\mathcal{M}}$
\end{itemize}
such that every element of $\mathcal{M}$ is included in $\nu (\mathcal{A})$ and such that the Krull dimension of $\mathcal{A}$ equals the rank $r$ of $\mathcal{M}$, where 
a map $\nu \colon \mathcal{A} \rightarrow S_{\mathcal{M}}$ is said to be a \emph{valuation} on $\mathcal{A}$ with values in $S_{\mathcal{M}}$ if for $f, g \in \mathcal{A}$ and $c \in \Bbbk^\times$, 
\begin{enumerate}
\item[(i)] $\nu (f g) = \nu (f) \star \nu (g)$, 
\item[(ii)] $\nu (f + g) \geq \nu (f) \oplus \nu (g)$, 
\item[(iii)] $\nu (c f) = \nu (f)$, 
\item[(iv)] $\nu (f) = \infty$ if and only if $f = 0$.
\end{enumerate}
\end{defi}

\begin{defi}[{\cite[Definition 6.5]{EHM}}]\label{d:adapted_basis}
Let $(\mathcal{A}, \nu)$ be a detropicalization of $\mathcal{M}$.
A $\Bbbk$-basis ${\bf B}$ of $\mathcal{A}$ is a \emph{convex adapted basis} for $\nu$ if 
\begin{enumerate}
\item[(i)] $\nu (\sum_i c_i b_i) = \bigoplus_i \nu (b_i)$\quad for each finite collection $c_i \in \Bbbk^\times$ and $b_i \in {\bf B}$,
\item[(ii)] $\nu (b)$ is included in $\mathcal{M}$ for all $b \in {\bf B}$.
\end{enumerate}
\end{defi}

\section{Relation with marked chain-order polytopes}

\subsection{Marked chain-order polytopes}\label{ss:marked_poset_polytopes}

In this subsection, we recall some basic definitions and properties of marked chain-order polytopes, following \cite{FFLP, FFP}. 
Let $\Pi$ be a finite poset with a partial order $\preceq$, and take a subset $\Pi^\ast \subseteq \Pi$ of $\Pi$ such that all minimal and maximal elements in $\Pi$ are included in $\Pi^\ast$. 
For an element $\lambda = (\lambda_a)_{a \in \Pi^\ast} \in \r^{\Pi^\ast}$, called a \emph{marking}, such that $\lambda_a \leq \lambda_b$ if $a \preceq b$ in $\Pi$, we call the triple $(\Pi, \Pi^\ast, \lambda)$ a \emph{marked poset}.
For $p, q \in \Pi$, write $q \lessdot p$ if $p$ \emph{covers} $q$, that is, $q \prec p$ and there is no $q' \in \Pi \setminus \{p, q\}$ with $q \prec q' \prec p$. 

\begin{defi}[{\cite[Section 1.3]{FFLP}}]
For a partition $\Pi \setminus \Pi^\ast = \mathcal{C} \sqcup \mathcal{O}$, the \emph{marked chain-order polytope} $\Delta_{\mathcal{C}, \mathcal{O}} (\Pi, \Pi^\ast, \lambda)$ is defined to be the set of $(x_p)_{p \in \Pi \setminus \Pi^\ast} \in \r^{\Pi \setminus \Pi^\ast}$ satisfying the following conditions:
\begin{itemize}
\item $x_p \geq 0$ for all $p \in \mathcal{C}$, 
\item $\sum_{i = 1}^\ell x_{p_i} \leq y_b - y_a$ for $a \lessdot p_1 \lessdot \cdots \lessdot p_\ell \lessdot b$ with $p_i \in \mathcal{C}$ and $a,b \in \Pi^\ast \sqcup \mathcal{O}$,
\end{itemize}
where we set 
\[y_c \coloneqq 
\begin{cases} 
\lambda_c &\text{if}\ c \in \Pi^\ast,\\ 
x_c &\text{if}\ c \in \mathcal{O}
\end{cases}\]
for $c \in \Pi^\ast \sqcup \mathcal{O}$. 
\end{defi}

By definition, $\Delta_{\emptyset, \Pi \setminus \Pi^\ast}(\Pi, \Pi^\ast, \lambda)$ (respectively, $\Delta_{\Pi \setminus \Pi^\ast, \emptyset}(\Pi, \Pi^\ast, \lambda)$) coincides with the marked order polytope $\mathcal{O}(\Pi, \Pi^\ast, \lambda)$ (respectively, the marked chain polytope $\mathcal{C}(\Pi, \Pi^\ast, \lambda)$) introduced in \cite[Definition 1.2]{ABS}. 
When $\lambda \in \z^{\Pi^\ast}$, it follows by \cite[Proposition 2.4]{FFLP} that $\Delta_{\mathcal{C}, \mathcal{O}}(\Pi, \Pi^\ast, \lambda)$ is an integral polytope for each partition $\Pi \setminus \Pi^\ast = \mathcal{C} \sqcup \mathcal{O}$. 
In the present paper, we always assume that $\lambda \in \z^{\Pi^\ast}$.
Generalizing Stanley's and Ardila--Bliem--Salazar's transfer maps \cite{Sta, ABS}, Fang--Fourier--Litza--Pegel \cite{FFLP} gave a \emph{transfer map} $\phi_{\mathcal{C}, \mathcal{O}}$ for marked chain-order polytopes. 
More precisely, they defined a piecewise-affine transformation $\phi_{\mathcal{C}, \mathcal{O}} \colon \r^{\Pi \setminus \Pi^\ast} \rightarrow \r^{\Pi \setminus \Pi^\ast}$, $(x_p)_p \mapsto (x_p^\prime)_p$, by 
\begin{align*}
&x_p^\prime \coloneqq 
\begin{cases}
x_p + \min(\{-x_q \mid q \lessdot p, \ q \in \Pi \setminus \Pi^\ast\} \cup \{-\lambda_q \mid q \lessdot p, \ q \in \Pi^\ast\}) &\text{if}\ p \in \mathcal{C}, \\
x_p &\text{otherwise}
\end{cases}
\end{align*}
for $p \in \Pi \setminus \Pi^\ast$.

\begin{thm}[{see \cite[Theorem 2.1 and Corollary 2.5]{FFLP}}]\label{t:marked_CO_transfer}
For every partition $\Pi \setminus \Pi^\ast = \mathcal{C} \sqcup \mathcal{O}$, the piecewise-affine transformation $\phi_{\mathcal{C}, \mathcal{O}}$ gives a bijective map from $\mathcal{O}(\Pi, \Pi^\ast, \lambda)$ to $\Delta_{\mathcal{C}, \mathcal{O}}(\Pi, \Pi^\ast, \lambda)$, which induces a bijective map between the sets of lattice points.
\end{thm}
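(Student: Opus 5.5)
We will prove Theorem \ref{t:marked_CO_transfer} by writing down an explicit inverse of $\phi_{\mathcal{C}, \mathcal{O}}$ and checking that both maps respect the defining inequalities. For $x \in \mathcal{O}(\Pi, \Pi^\ast, \lambda)$, write $\tilde x_c$ for $x_c$ when $c \notin \Pi^\ast$ and for $\lambda_c$ when $c \in \Pi^\ast$. Then $x^\prime = \phi_{\mathcal{C}, \mathcal{O}}(x)$ has $x^\prime_p = x_p - \max_{q \lessdot p} \tilde x_q$ for $p \in \mathcal{C}$, and $x^\prime_p = x_p$ for $p \in \mathcal{O}$.

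\emph{Inverse map.} Define $\psi$ recursively along a linear extension of $\Pi$:
\begin{itemize}
\item $y_p \coloneqq x^\prime_p$ for $p \in \mathcal{O}$;
\item $y_p \coloneqq x^\prime_p + \max_{q \lessdot p} \tilde y_q$ for $p \in \mathcal{C}$, where $\tilde y_q$ is $\lambda_q$, $x^\prime_q$ or $y_q$ according to whether $q$ lies in $\Pi^\ast$, $\mathcal{O}$ or $\mathcal{C}$.
\end{itemize}
Each $y_p$ uses only elements below $p$, so the recursion is well defined. Moreover $\psi \circ \phi = {\rm id}$ and $\phi \circ \psi = {\rm id}$ on all of $\mathbb{R}^{\Pi \setminus \Pi^\ast}$, so $\phi_{\mathcal{C}, \mathcal{O}}$ is a bijection of $\mathbb{R}^{\Pi \setminus \Pi^\ast}$. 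Both $\phi$ and $\psi$ involve only sums, differences and maxima of coordinates and of the integers $\lambda_a$. Hence they preserve $\mathbb{Z}^{\Pi \setminus \Pi^\ast}$, and the lattice-point statement follows once the polytope statement is proved.

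\emph{$\phi$ maps into $\Delta_{\mathcal{C}, \mathcal{O}}$.} Let $x \in \mathcal{O}$. Since $x$ is order preserving, $x^\prime_p \geq 0$ for $p \in \mathcal{C}$. Now take a saturated chain $a \lessdot p_1 \lessdot \cdots \lessdot p_\ell \lessdot b$ with all $p_i \in \mathcal{C}$ and $a, b \in \Pi^\ast \sqcup \mathcal{O}$. Then
\[
x^\prime_{p_i} \leq \tilde x_{p_i} - \tilde x_{p_{i-1}} \quad (p_0 \coloneqq a),
\]
and the sum telescopes to
\[
\sum_{i=1}^{\ell} x^\prime_{p_i} \leq \tilde x_{p_\ell} - \tilde x_a \leq \tilde x_b - \tilde x_a = y_b - y_a .
\]

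\emph{$\psi$ maps into $\mathcal{O}$; this is the main obstacle.} Let $x^\prime \in \Delta_{\mathcal{C}, \mathcal{O}}$ and $y = \psi(x^\prime)$. We must show $\tilde y_q \leq \tilde y_p$ for every cover $q \lessdot p$. When $p \in \mathcal{C}$ this is immediate from $x^\prime_p \geq 0$ and the $\max$ in the definition. The delicate case is $p \in \mathcal{O} \sqcup \Pi^\ast$ with $q \in \mathcal{C}$. Here we unfold the recursion downward: choose $q_1 \coloneqq q$, then at each step pick a cover achieving the maximum, continuing while we stay in $\mathcal{C}$. This yields a chain $a \lessdot q_\ell \lessdot \cdots \lessdot q_1 = q$ with all $q_i \in \mathcal{C}$ and $a \in \Pi^\ast \sqcup \mathcal{O}$, and
\[
\tilde y_q = \tilde y_a + \sum_{i=1}^{\ell} x^\prime_{q_i} .
\]
The process terminates because minimal elements of $\Pi$ lie in $\Pi^\ast$. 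Applying the chain inequality of $\Delta_{\mathcal{C}, \mathcal{O}}$ to the chain extended by $p$ gives $\tilde y_q \leq \tilde y_p$. The remaining case, where both $q$ and $p$ lie in $\mathcal{O} \sqcup \Pi^\ast$, is the chain inequality with $\ell = 0$, that is, $y_a \leq y_b$ for covers among non-chain elements.

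Finally, the marking $\lambda$ is order preserving by assumption, so the inequalities among marked elements hold automatically. This completes the plan.
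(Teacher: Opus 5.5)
Your proof is correct and complete. The recursive inverse is well defined and inverts $\phi_{\mathcal{C},\mathcal{O}}$ on all of $\mathbb{R}^{\Pi\setminus\Pi^\ast}$. Telescoping along a saturated chain gives $\phi_{\mathcal{C},\mathcal{O}}(\mathcal{O}(\Pi,\Pi^\ast,\lambda))\subseteq\Delta_{\mathcal{C},\mathcal{O}}(\Pi,\Pi^\ast,\lambda)$. For the reverse inclusion, unfolding the maxima produces exactly the saturated chain $a\lessdot q_\ell\lessdot\cdots\lessdot q_1\lessdot p$ needed to apply a defining inequality of $\Delta_{\mathcal{C},\mathcal{O}}(\Pi,\Pi^\ast,\lambda)$. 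The lattice-point claim then follows because both maps preserve $\mathbb{Z}^{\Pi\setminus\Pi^\ast}$ for integral $\lambda$.

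The paper gives no proof of its own here and simply cites Fang--Fourier--Litza--Pegel. As far as I recall, their transfer-map argument is of the same kind but is set up for the continuous family indexed by $t\in[0,1]^{\Pi\setminus\Pi^\ast}$, with the lattice-point statement obtained by specializing to $t\in\{0,1\}^{\Pi\setminus\Pi^\ast}$. Your direct treatment of the $0/1$ case is a self-contained substitute for that citation.
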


Assume that $\Pi$ is \emph{graded}, that is, every maximal chain in $\Pi$ has the same length. 
Then all chains starting from a minimal element in $\Pi$ and ending at $p$ have the same length for each $p \in \Pi$; 
the length of such chains is denoted by $r(p)$. 
Assume that the marking $\lambda$ satisfies $\lambda_a=\lambda_b$ for all $a,b \in \Pi^\ast$ with $r(a)=r(b)$. 
For instance, the marking $\lambda^r$ given by $(\lambda^r)_a = r(a)$ for $a \in \Pi^\ast$ satisfies this condition. 
Then there exists ${\bm u} = (u_p)_{p \in \Pi \setminus \Pi^\ast} \in {\mathcal O}(\Pi, \Pi^\ast, \lambda)  \cap \z^{\Pi \setminus \Pi^\ast}$ such that
\begin{equation}\label{eq:assumption}
\begin{split}
u_p &= u_{p'} \text{ for all }p,p' \in \Pi \setminus \Pi^\ast \text{ with } r(p)=r(p'), \text{ and }\\
u_p &= \lambda_a \text{ for all }p \in \Pi \setminus \Pi^\ast \text{ and }a \in \Pi^\ast\text{ with }r(p)=r(a). 
\end{split}
\end{equation}
Fix such ${\bm u}$, and denote the translation ${\mathcal O}(\Pi, \Pi^\ast, \lambda) - {\bm u}$ by $\widehat{\mathcal O}(\Pi, \Pi^\ast, \lambda)$. 
Similarly, we set
\[\widehat{\Delta}_{\mathcal{C}}(\Pi, \Pi^\ast, \lambda) \coloneqq \Delta_{\mathcal{C}, \mathcal{O}}(\Pi, \Pi^\ast, \lambda)  - \phi_{\mathcal{C}, \mathcal{O}}({\bm u})\]
for $\mathcal{C} \in 2^{\Pi \setminus \Pi^\ast}$, where $\mathcal{O} \coloneqq \Pi \setminus (\Pi^\ast \cup \mathcal{C})$.
Define a piecewise-linear transformation $\mu_{\mathcal{C}} \colon \r^{\Pi \setminus \Pi^\ast} \rightarrow \r^{\Pi \setminus \Pi^\ast}$, $(x_p)_p \mapsto (x_p^\prime)_p$, by 
\begin{equation}\label{eq:transfer_linear}
\begin{aligned}
&x_p^\prime \coloneqq 
\begin{cases}
x_p + \min(\{-x_q \mid q \lessdot p, \ q \in \Pi \setminus \Pi^\ast\} \cup \{0 \mid q \lessdot p, \ q \in \Pi^\ast\}) &\text{if}\ p \in \mathcal{C}, \\
x_p &\text{otherwise}
\end{cases}
\end{aligned}
\end{equation}
for $p \in \Pi \setminus \Pi^\ast$.
Then the authors \cite[Proof of Theorem 5.3]{FH} proved that the piecewise-linear transformation $\mu_{\mathcal{C}}$ can be described as a composition of dual operations of combinatorial mutations. 
In addition, it is proved in \cite[Proof of Theorem 5.3]{FH} that $\mu_{\mathcal{C}} \colon \r^{\Pi \setminus \Pi^\ast} \rightarrow \r^{\Pi \setminus \Pi^\ast}$ coincides with a composition of the transfer map $\phi_{\mathcal{C}, \Pi \setminus (\Pi^\ast \cup \mathcal{C})}$ with translations. 
More precisely, we have 
\[\mu_{\mathcal{C}} ({\bm a}) = \phi_{\mathcal{C}, \Pi \setminus (\Pi^\ast \cup \mathcal{C})} ({\bm a} + {\bm u}) - \phi_{\mathcal{C}, \Pi \setminus (\Pi^\ast \cup \mathcal{C})}({\bm u})\]
for all ${\bm a} \in \r^{\Pi \setminus \Pi^\ast}$. 
In particular, the map $\mu_{\mathcal{C}}$ gives a bijective map from $\widehat{\mathcal{O}}(\Pi, \Pi^\ast, \lambda)$ to $\widehat{\Delta}_{\mathcal{C}}(\Pi, \Pi^\ast, \lambda)$, which induces a bijective map between the sets of lattice points. 

\subsection{Marked chain-order polytopes as PL polytopes}

We take $\mathcal{I}$ to be the power set $2^{\Pi \setminus \Pi^\ast}$ of $\Pi \setminus \Pi^\ast$. 
For each $\mathcal{C} \in 2^{\Pi \setminus \Pi^\ast}$, which is a subset of $\Pi \setminus \Pi^\ast$, set $M_{\mathcal{C}} \coloneqq \mathbb{Z}^{\Pi \setminus \Pi^\ast}$.
For $\mathcal{C}_1, \mathcal{C}_2 \in 2^{\Pi \setminus \Pi^\ast}$, define $\mu_{\mathcal{C}_1, \mathcal{C}_2} \colon M_{\mathcal{C}_1} \rightarrow M_{\mathcal{C}_2}$ by 
\[\mu_{\mathcal{C}_1, \mathcal{C}_2} \coloneqq \mu_{\mathcal{C}_2} \circ \mu_{\mathcal{C}_1}^{-1}.\]
Then it follows easily by definition that $\mathcal{M} = (\{M_{\mathcal{C}}\}_{\mathcal{C} \in 2^{\Pi \setminus \Pi^\ast}}, \{\mu_{\mathcal{C}_1, \mathcal{C}_2}\}_{\mathcal{C}_1, \mathcal{C}_2 \in 2^{\Pi \setminus \Pi^\ast}})$ becomes a finite polyptych lattice over $\mathbb{Z}$. 
Note that the polyptych lattice $\mathcal{M}$ is independent of the choice of the marking $\lambda$.
For $p \in \Pi \setminus \Pi^\ast$, define $\varepsilon_{p}, \varepsilon_{p}^\prime \in \mathcal{M}$ by $\pi_\emptyset (\varepsilon_{p}) = {\bm e}_{p} \in M_\emptyset$ and $\pi_\emptyset (\varepsilon_{p}^\prime) = -{\bm e}_{p} \in M_\emptyset$, where ${\bm e}_p \in \mathbb{Z}^{\Pi \setminus \Pi^\ast}$ denotes the unit vector corresponding to $p \in \Pi \setminus \Pi^\ast$.
Since $\mu_{\mathcal{C}} (\widehat{\mathcal{O}}(\Pi, \Pi^\ast, \lambda)) = \widehat{\Delta}_{\mathcal{C}}(\Pi, \Pi^\ast, \lambda)$, we obtain a subset $\widehat{\Delta}(\Pi, \Pi^\ast, \lambda) \subseteq \mathcal{M}_\mathbb{R}$ such that $\pi_{\mathcal{C}} (\widehat{\Delta}(\Pi, \Pi^\ast, \lambda)) = \widehat{\Delta}_{\mathcal{C}}(\Pi, \Pi^\ast, \lambda)$ for all $\mathcal{C} \in 2^{\Pi \setminus \Pi^\ast}$.
For $p \in \Pi \setminus \Pi^\ast$, define $\varphi_p \colon \mathcal{M} \rightarrow \mathbb{Z}$ by 
\[\varphi_p (m) \coloneqq y_p\]
for $m \in \mathcal{M}$, 
where $\pi_{\{p\}} (m) = (y_q)_{q \in \Pi \setminus \Pi^\ast}$. 
Writing $\pi_{\emptyset} (m)$ as $\pi_{\emptyset} (m) = (x_q)_{q \in \Pi \setminus \Pi^\ast}$, we have 
\[\varphi_p (m) = x_p + \min(\{-x_q \mid q \lessdot p, \ q \in \Pi \setminus \Pi^\ast\} \cup \{0 \mid q \lessdot p, \ q \in \Pi^\ast\})\]
by the definition of $\mu_{\emptyset, \{p\}} = \mu_{\{p\}} \colon M_{\emptyset} \rightarrow M_{\{p\}}$.

\begin{prop}
For $p \in \Pi \setminus \Pi^\ast$, it follows that $\varphi_p \in {\rm Sp}(\mathcal{M})$.
\end{prop}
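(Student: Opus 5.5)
The plan is to show two things about the function $\varphi_p \circ \pi_{\mathcal{C}}^{-1} \colon (M_{\mathcal{C}})_\mathbb{R} \rightarrow \mathbb{R}$, for every $\mathcal{C} \in 2^{\Pi \setminus \Pi^\ast}$:
\begin{enumerate}
\item[(a)] it is positively homogeneous and concave, hence superadditive;
\item[(b)] it is $\mathbb{R}$-linear for the particular chart $\mathcal{C} = \{p\}$.
\end{enumerate}
Granting these, the point condition follows at once. For $m_1, m_2 \in \mathcal{M}$ and any chart $\mathcal{C}$, superadditivity gives $\varphi_p(m_1 +_{\mathcal{C}} m_2) \geq \varphi_p(m_1) + \varphi_p(m_2)$. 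Linearity in the chart $\{p\}$ gives equality for $\mathcal{C} = \{p\}$, so the minimum over all charts equals $\varphi_p(m_1) + \varphi_p(m_2)$. Homogeneity $\varphi_p(\lambda m) = \lambda \varphi_p(m)$ for $\lambda \in \mathbb{Z}_{\geq 0}$ holds because every $\mu_{\mathcal{C}}$ is built from sums and minima of linear forms with no constant terms. Integrality of the values is clear, since every $\mu_{\mathcal{C}_1,\mathcal{C}_2}$ maps $\mathbb{Z}^{\Pi \setminus \Pi^\ast}$ to itself.

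Statement (b) is immediate, since by definition $\varphi_p \circ \pi_{\{p\}}^{-1}$ is the $p$-th coordinate function on $M_{\{p\}}$.

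For (a), fix $\mathcal{C}$ and write $z = \pi_{\mathcal{C}}(m)$ and $x = \pi_\emptyset(m) = \mu_{\mathcal{C}}^{-1}(z)$. The first step is to invert \eqref{eq:transfer_linear} explicitly. For $r \notin \mathcal{C}$ we have $x_r = z_r$, and for $r \in \mathcal{C}$ we have
\[x_r = z_r + \max(\{x_s \mid s \lessdot r,\ s \in \Pi \setminus \Pi^\ast\} \cup \{0 \mid s \lessdot r,\ s \in \Pi^\ast\}).\]
The right-hand side involves only elements of smaller rank. So by induction on $r(\cdot)$ (possible because $\Pi$ is graded, or simply finite), each $x_r$ is a convex, positively homogeneous, piecewise-linear function of $z$. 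The reason is that sums and maxima of convex homogeneous functions are again convex and homogeneous, and the base cases are the linear functions $z_r$ and the constant $0$.

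Next, substitute these expressions into
\[\varphi_p(m) = x_p + \min(\{-x_q \mid q \lessdot p,\ q \in \Pi \setminus \Pi^\ast\} \cup \{0 \mid q \lessdot p,\ q \in \Pi^\ast\}).\]
If $p \in \mathcal{C}$, this equals $z_p$, which is linear. If $p \notin \mathcal{C}$, then $x_p = z_p$ is linear, while each $-x_q$ is concave, and a minimum of concave functions together with $0$ is concave. Either way $\varphi_p \circ \pi_{\mathcal{C}}^{-1}$ is concave and positively homogeneous. Such a function satisfies $f(a+b) = 2f\bigl(\tfrac{a+b}{2}\bigr) \geq f(a) + f(b)$, which is the superadditivity needed in (a).

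I expect the only real care to lie in the inversion step. One must check that the inverse of $\mu_{\mathcal{C}}$ really is given by the recursive max-formula, with the recursion running through chains inside $\mathcal{C}$ downward until reaching elements outside $\mathcal{C}$ or marked elements. One must also track that the signs produce convexity of $x_q$ rather than concavity. Everything else is formal.
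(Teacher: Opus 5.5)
Your proof is correct. Its skeleton matches the paper's: equality $\varphi_p(m_1 +_{\mathcal{C}} m_2) = \varphi_p(m_1)+\varphi_p(m_2)$ in a chart containing $p$, and superadditivity in the remaining charts via $\min_i a_i + \min_i b_i \le \min_i (a_i+b_i)$.

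The difference is in the case $p\notin\mathcal{C}$. There the paper writes
\[\varphi_p(m_1 +_{\mathcal{C}} m_2) = x_p^{(1)}+x_p^{(2)} + \min(\{-x_q^{(1)}-x_q^{(2)} \mid q\lessdot p,\ q\in\Pi\setminus\Pi^\ast\}\cup\{0 \mid q \lessdot p,\ q\in\Pi^\ast\})\]
as an equality. This tacitly takes the $\emptyset$-coordinates of $m_1+_{\mathcal{C}}m_2$ at the elements $q\lessdot p$ to be $x_q^{(1)}+x_q^{(2)}$. That is true for $q\notin\mathcal{C}$, but not in general for $q\in\mathcal{C}$.

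For instance, take $a\lessdot s_1,s_2\lessdot q\lessdot p\lessdot b$ with $\Pi^\ast=\{a,b\}$ and $\mathcal{C}=\{q\}$. Let $\pi_\emptyset(m_1)={\bm e}_{s_1}$ and $\pi_\emptyset(m_2)={\bm e}_{s_2}$. Then $\pi_\emptyset(m_1+_{\{q\}}m_2)={\bm e}_{s_1}+{\bm e}_{s_2}-{\bm e}_q$, so $\varphi_p(m_1+_{\{q\}}m_2)=1$, whereas the displayed formula gives $0$.

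The discrepancy is in the harmless direction. What repairs it is exactly your key step: each $\emptyset$-coordinate $x_q$ is a convex, positively homogeneous function of the $\mathcal{C}$-coordinates, hence subadditive. This turns the displayed equality into the inequality $\geq$, which is all the argument needs.

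Your packaging ("$\varphi_p$ is concave in every chart and linear in the chart $\{p\}$") costs a short induction along the recursive inverse of $\mu_{\mathcal{C}}$. In exchange it is airtight and isolates the general mechanism by which such functions are points of $\mathcal{M}$.
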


\begin{proof}
By the definition of $\varphi_p$, we have $\varphi_p (k m) = k \varphi_p(m)$ for all $m \in \mathcal{M}$ and $k \in \mathbb{Z}_{\geq 0}$. 
Take $m_1, m_2 \in \mathcal{M}$ and $\mathcal{C} \in 2^{\Pi \setminus \Pi^\ast}$. 
For $i = 1, 2$, we write $\pi_{\{p\}} (m_i) = (y_q^{(i)})_{q \in \Pi \setminus \Pi^\ast}$ and $\pi_{\emptyset} (m) = (x_q^{(i)})_{q \in \Pi \setminus \Pi^\ast}$. 
By the definition of $\mu_{\mathcal{C}}$, we have $\varphi_p (m_1 +_{\mathcal{C}} m_2) = y_p^{(1)} + y_p^{(2)}$ if $p \in \mathcal{C}$ and 
\[\varphi_p (m_1 +_{\mathcal{C}} m_2) = x_p^{(1)} + x_p^{(2)} + \min(\{-x_q^{(1)}-x_q^{(2)} \mid q \lessdot p, \ q \in \Pi \setminus \Pi^\ast\} \cup \{0 \mid q \lessdot p, \ q \in \Pi^\ast\}) 
\]
if $p \notin \mathcal{C}$. 
Since 
\begin{align*}
&\min(\{-x_q^{(1)} \mid q \lessdot p, \ q \in \Pi \setminus \Pi^\ast\} \cup \{0 \mid q \lessdot p, \ q \in \Pi^\ast\}) \\
&+ \min(\{-x_q^{(2)} \mid q \lessdot p, \ q \in \Pi \setminus \Pi^\ast\} \cup \{0 \mid q \lessdot p, \ q \in \Pi^\ast\}) \\
\leq &\min(\{-x_q^{(1)}-x_q^{(2)} \mid q \lessdot p, \ q \in \Pi \setminus \Pi^\ast\} \cup \{0 \mid q \lessdot p, \ q \in \Pi^\ast\}), 
\end{align*}
it follows that 
\begin{align*}
y_p^{(1)} + y_p^{(2)} \leq x_p^{(1)} + x_p^{(2)} + \min(\{-x_q^{(1)}-x_q^{(2)} \mid q \lessdot p, \ q \in \Pi \setminus \Pi^\ast\} \cup \{0 \mid q \lessdot p, \ q \in \Pi^\ast\}).
\end{align*}
Hence we have 
\[\min \{\varphi_p (m_1 +_{\mathcal{C}} m_2) \mid \mathcal{C} \in 2^{\Pi \setminus \Pi^\ast}\} = y_p^{(1)} + y_p^{(2)} = \varphi_p (m_1) + \varphi_p (m_2),\]
which implies that $\varphi_p \in {\rm Sp}(\mathcal{M})$. 
\end{proof}

For $p \in \Pi^\ast$ and $p^\prime \in \Pi \setminus \Pi^\ast$ with $p^\prime \lessdot p$, define $\varphi_{p, p^\prime} \colon \mathcal{M} \rightarrow \mathbb{Z}$ by 
\[\varphi_{p, p^\prime} (m) \coloneqq -x_{p^\prime}\]
for $m \in \mathcal{M}$, 
where $\pi_{\emptyset} (m) = (x_q)_{q \in \Pi \setminus \Pi^\ast}$. 

\begin{prop}
For $p \in \Pi^\ast$ and $p^\prime \in \Pi \setminus \Pi^\ast$ with $p^\prime \lessdot p$, it follows that $\varphi_{p, p^\prime} \in {\rm Sp}(\mathcal{M})$.
\end{prop}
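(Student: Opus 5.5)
The plan is to verify the two defining conditions of a point directly, as in the proof of the preceding proposition. Homogeneity is immediate. For $k \in \mathbb{Z}_{\geq 0}$ we have $\pi_\emptyset(km) = k\,\pi_\emptyset(m)$ by the definition of $km$, hence $\varphi_{p,p^\prime}(km) = k\varphi_{p,p^\prime}(m)$.

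For the additivity condition, take $m_1, m_2 \in \mathcal{M}$ and write $\pi_\emptyset(m_i) = (x^{(i)}_q)_q$. For $\mathcal{C} = \emptyset$ we get $\varphi_{p,p^\prime}(m_1 +_\emptyset m_2) = -x^{(1)}_{p^\prime} - x^{(2)}_{p^\prime}$. So it suffices to show that for every $\mathcal{C} \in 2^{\Pi\setminus\Pi^\ast}$ the $p^\prime$-th $\emptyset$-coordinate $x_{p^\prime}(m_1 +_{\mathcal{C}} m_2)$ is at most $x^{(1)}_{p^\prime} + x^{(2)}_{p^\prime}$. The minimum over $\mathcal{C}$ is then attained at $\mathcal{C} = \emptyset$ and equals $\varphi_{p,p^\prime}(m_1) + \varphi_{p,p^\prime}(m_2)$.

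I will prove the stronger claim that for every $q \in \Pi \setminus \Pi^\ast$ and every $\mathcal{C}$,
\[x_q(m_1 +_{\mathcal{C}} m_2) \leq x_q(m_1) + x_q(m_2),\]
by induction on $r(q)$. Fix $\mathcal{C}$ and write $\pi_{\mathcal{C}}(m) = (y_q)_q$. Inverting \eqref{eq:transfer_linear} gives $x_q = y_q$ if $q \notin \mathcal{C}$, and
\[x_q = y_q + \max\bigl(\{x_{q^\prime} \mid q^\prime \lessdot q,\ q^\prime \in \Pi\setminus\Pi^\ast\} \cup \{0 \mid q^\prime \lessdot q,\ q^\prime \in \Pi^\ast\}\bigr)\]
if $q \in \mathcal{C}$. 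This is a recursive expression in terms of elements of lower rank, so $\mu_{\mathcal{C}}^{-1}$ is computed triangularly with respect to $r$.

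The $\mathcal{C}$-coordinates $y_q$ are additive under $+_{\mathcal{C}}$ by definition. If $q \notin \mathcal{C}$, the inequality therefore holds with equality. If $q \in \mathcal{C}$, the induction hypothesis applies to each $q^\prime \lessdot q$, because $r(q^\prime) < r(q)$. Since $\max$ is monotone, this gives $\max(x_{q^\prime}(m_1 +_{\mathcal{C}} m_2), \ldots, 0) \leq \max(x_{q^\prime}(m_1) + x_{q^\prime}(m_2), \ldots, 0)$. Subadditivity $\max(a_j + b_j) \leq \max(a_j) + \max(b_j)$ then bounds the right-hand side by the sum of the two separate maxima, where $0 = 0 + 0$ is used for the marked entries. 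Adding $y^{(1)}_q + y^{(2)}_q$ yields the claim for $q$. The base case, where $q$ covers only marked elements, is the same computation with only the constant $0$ inside the max.

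The only delicate point is this induction. One must handle the fact that the lower neighbours $q^\prime$ of $p^\prime$ may themselves lie in $\mathcal{C}$, so $x_{p^\prime}$ is not a single max of linear forms in the $\mathcal{C}$-chart but an iterated one. The induction on rank settles this. Note that the hypothesis $p^\prime \lessdot p$ with $p \in \Pi^\ast$ is not used in this argument; it only serves to index the point, as in \eqref{eq:number_of_divisors}.
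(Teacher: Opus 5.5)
Your proof is correct and follows the same strategy as the paper. You check homogeneity, then show that a chart with $p^\prime \notin \mathcal{C}$ (such as $\mathcal{C} = \emptyset$) realizes the minimum, using superadditivity of $\min$ (equivalently, subadditivity of $\max$).

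The one real difference is your induction on rank, and it is worth keeping. For $p^\prime \in \mathcal{C}$, the paper writes $\varphi_{p,p^\prime}(m_1 +_{\mathcal{C}} m_2)$ as an explicit expression. In it, the $\emptyset$-coordinates of $m_1 +_{\mathcal{C}} m_2$ at the lower covers $q$ of $p^\prime$ are taken to be $x^{(1)}_q + x^{(2)}_q$. That is literally true only when those $q$ lie outside $\mathcal{C}$.

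In general one only has your inequality $x_q(m_1 +_{\mathcal{C}} m_2) \leq x_q(m_1) + x_q(m_2)$. This makes the paper's expression a lower bound for $\varphi_{p,p^\prime}(m_1 +_{\mathcal{C}} m_2)$, so the conclusion still follows. Your version is thus the fully rigorous form of the same argument. The same inequality also repairs the analogous step in the paper's proof for $\varphi_p$.
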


\begin{proof}
By the definition of $\varphi_{p, p^\prime}$, we have $\varphi_{p, p^\prime} (k m) = k \varphi_{p, p^\prime}(m)$ for all $m \in \mathcal{M}$ and $k \in \mathbb{Z}_{\geq 0}$. 
Take $m_1, m_2 \in \mathcal{M}$ and $\mathcal{C} \in 2^{\Pi \setminus \Pi^\ast}$. 
For $i = 1, 2$, we write $\pi_{\emptyset} (m) = (x_q^{(i)})_{q \in \Pi \setminus \Pi^\ast}$. 
By the definition of $\mu_{\mathcal{C}}$, we have $\varphi_{p, p^\prime} (m_1 +_{\mathcal{C}} m_2) = -x_{p^\prime}^{(1)} - x_{p^\prime}^{(2)}$ if $p^\prime \notin \mathcal{C}$ and 
\begin{align*}
\varphi_{p, p^\prime} (m_1 +_{\mathcal{C}} m_2) = &-x_{p^\prime}^{(1)} - x_{p^\prime}^{(2)} - \min(\{-x_q^{(1)} \mid q \lessdot p^\prime, \ q \in \Pi \setminus \Pi^\ast\} \cup \{0 \mid q \lessdot p^\prime, \ q \in \Pi^\ast\})\\ 
&- \min(\{-x_q^{(2)} \mid q \lessdot p^\prime, \ q \in \Pi \setminus \Pi^\ast\} \cup \{0 \mid q \lessdot p^\prime, \ q \in \Pi^\ast\}) \\
&+ \min(\{-x_q^{(1)}-x_q^{(2)} \mid q \lessdot p^\prime, \ q \in \Pi \setminus \Pi^\ast\} \cup \{0 \mid q \lessdot p^\prime, \ q \in \Pi^\ast\}) 
\end{align*}
if $p^\prime \in \mathcal{C}$. 
Since 
\begin{align*}
&\min(\{-x_q^{(1)} \mid q \lessdot p^\prime, \ q \in \Pi \setminus \Pi^\ast\} \cup \{0 \mid q \lessdot p^\prime, \ q \in \Pi^\ast\}) \\
&+ \min(\{-x_q^{(2)} \mid q \lessdot p^\prime, \ q \in \Pi \setminus \Pi^\ast\} \cup \{0 \mid q \lessdot p^\prime, \ q \in \Pi^\ast\}) \\
\leq &\min(\{-x_q^{(1)}-x_q^{(2)} \mid q \lessdot p^\prime, \ q \in \Pi \setminus \Pi^\ast\} \cup \{0 \mid q \lessdot p^\prime, \ q \in \Pi^\ast\}),
\end{align*}
it follows that 
\[\min \{\varphi_{p, p^\prime} (m_1 +_{\mathcal{C}} m_2) \mid \mathcal{C} \in 2^{\Pi \setminus \Pi^\ast}\} = -x_{p^\prime}^{(1)} - x_{p^\prime}^{(2)} = \varphi_{p, p^\prime} (m_1) + \varphi_{p, p^\prime} (m_2),\]
which implies that $\varphi_{p, p^\prime} \in {\rm Sp}(\mathcal{M})$. 
\end{proof}

Recall that $\Pi$ is graded and that we fix ${\bm u} = (u_p)_{p \in \Pi \setminus \Pi^\ast} \in {\mathcal O}(\Pi, \Pi^\ast, \lambda)  \cap \z^{\Pi \setminus \Pi^\ast}$ satisfying \eqref{eq:assumption} to define $\widehat{\Delta}_{\mathcal{C}}(\Pi, \Pi^\ast, \lambda)$. 
For $p \in \Pi \setminus \Pi^\ast$, since $p$ is not minimal, there exists $q \in \Pi$ such that $q \lessdot p$.
Then we set $a_p \coloneqq u_q - u_p \in \mathbb{Z}$, where $u_q \coloneqq \lambda_q$ if $q \in \Pi^\ast$.
This $a_p$ is independent of the choice of $q \in \Pi$ such that $q \lessdot p$.
For $p \in \Pi^\ast$ and $p^\prime \in \Pi \setminus \Pi^\ast$ with $p^\prime \lessdot p$, define $a_{p,p^\prime} \in \mathbb{Z}$ by $a_{p,p^\prime} \coloneqq u_{p^\prime} - \lambda_p$.

\begin{prop}\label{p:PL_description}
With the notation above, it holds that 
\[\widehat{\Delta}(\Pi, \Pi^\ast, \lambda) = \left(\bigcap_{p \in \Pi \setminus \Pi^\ast} \mathcal{H}_{\varphi_p, a_p}\right) \cap \left(\bigcap_{p \in \Pi^\ast, p^\prime \in \Pi \setminus \Pi^\ast; p^\prime \lessdot p} \mathcal{H}_{\varphi_{p, p^\prime} , a_{p, p^\prime}}\right).\]
In particular, the set $\widehat{\Delta}(\Pi, \Pi^\ast, \lambda) \subseteq \mathcal{M}_{\mathbb{R}}$ is a \emph{PL polytope} over $\mathbb{Z}$ in the sense of \cite[Definition 5.1]{EHM}.
\end{prop}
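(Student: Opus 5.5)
Since $\widehat{\Delta}(\Pi, \Pi^\ast, \lambda)$ is determined by its $\emptyset$-th coordinate, namely $\pi_\emptyset(\widehat{\Delta}(\Pi, \Pi^\ast, \lambda)) = \widehat{\mathcal{O}}(\Pi, \Pi^\ast, \lambda) = \mathcal{O}(\Pi, \Pi^\ast, \lambda) - {\bm u}$, it suffices to compare both sides in the chart $M_\emptyset$. Write $\pi_\emptyset(m) = (x_q)_q$ and put $z_q \coloneqq x_q + u_q$ for $q \in \Pi \setminus \Pi^\ast$ and $z_q \coloneqq \lambda_q$ for $q \in \Pi^\ast$. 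Then $m \in \widehat{\Delta}(\Pi, \Pi^\ast, \lambda)$ if and only if $(z_q)_{q \in \Pi \setminus \Pi^\ast} \in \mathcal{O}(\Pi, \Pi^\ast, \lambda)$. This in turn means that $z_q \leq z_p$ for every cover relation $q \lessdot p$ in $\Pi$; the marked order polytope is cut out by cover relations alone, since its defining chain inequalities reduce to covering pairs when $\mathcal{C} = \emptyset$. I will translate each cover inequality into one of the half-spaces $\mathcal{H}_{\varphi, a}$.

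There are three kinds of covers $q \lessdot p$. First, if $p, q \in \Pi^\ast$, the inequality $\lambda_q \leq \lambda_p$ holds automatically, because $\lambda$ is a marking. Second, suppose $p \in \Pi \setminus \Pi^\ast$, with $q$ arbitrary. By \eqref{eq:assumption}, the quantity $u_q - u_p$ (with $u_q = \lambda_q$ when $q \in \Pi^\ast$) equals $a_p$ for every $q \lessdot p$. Hence $z_q \leq z_p$ becomes $x_p - x_q \geq a_p$ when $q \notin \Pi^\ast$, and $x_p \geq a_p$ when $q \in \Pi^\ast$. Taking all $q \lessdot p$ together, these conditions are equivalent to
\[x_p + \min(\{-x_q \mid q \lessdot p,\ q \in \Pi \setminus \Pi^\ast\} \cup \{0 \mid q \lessdot p,\ q \in \Pi^\ast\}) \geq a_p,\]
which is exactly $\varphi_p(m) \geq a_p$ by the explicit formula for $\varphi_p$ in the chart $M_\emptyset$. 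Third, suppose $p \in \Pi^\ast$ and $p^\prime = q \in \Pi \setminus \Pi^\ast$. Then $x_{p^\prime} + u_{p^\prime} \leq \lambda_p$ is equivalent to $-x_{p^\prime} \geq u_{p^\prime} - \lambda_p = a_{p,p^\prime}$, that is, to $\varphi_{p,p^\prime}(m) \geq a_{p,p^\prime}$. Intersecting over all covers yields the claimed equality. The one point needing care is the second case: the constant $a_p$ must be the same for all lower covers $q$, which is what lets the several cover inequalities at $p$ merge into the single min-expression. This uniformity is precisely what \eqref{eq:assumption} and the gradedness of $\Pi$ provide, and I would verify it before anything else.

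For the last assertion, the equality just obtained exhibits $\widehat{\Delta}(\Pi, \Pi^\ast, \lambda)$ as a finite intersection of half-spaces $\mathcal{H}_{\varphi, a}$. Each $\varphi$ is a point of $\mathcal{M}$ by the two preceding propositions, and each $a \in \mathbb{Z}$. The set is bounded, because its $\emptyset$-chart is the polytope $\widehat{\mathcal{O}}(\Pi, \Pi^\ast, \lambda)$. These are the conditions required for a PL polytope over $\mathbb{Z}$ in \cite[Definition 5.1]{EHM}.
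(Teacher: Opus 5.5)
Your proposal is correct and follows the paper's proof. You reduce to the $\emptyset$-chart, where $\pi_\emptyset(\widehat{\Delta}(\Pi,\Pi^\ast,\lambda)) = \widehat{\mathcal{O}}(\Pi,\Pi^\ast,\lambda)$, and identify $\pi_\emptyset(\mathcal{H}_{\varphi_p,a_p})$ and $\pi_\emptyset(\mathcal{H}_{\varphi_{p,p^\prime},a_{p,p^\prime}})$ with the cover-relation inequalities defining the translated marked order polytope. Your additional remarks (marked--marked covers are automatic, $a_p$ is independent of the lower cover by gradedness and \eqref{eq:assumption}, and the explicit check of the PL-polytope conditions) only spell out points the paper leaves implicit.
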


\begin{proof}
Since $\pi_\emptyset (\widehat{\Delta}(\Pi, \Pi^\ast, \lambda)) = \widehat{\mathcal{O}}(\Pi, \Pi^\ast, \lambda)$, it suffices to prove that 
\begin{equation}\label{eq:PL_description_order}
\widehat{\mathcal{O}}(\Pi, \Pi^\ast, \lambda) = \left(\bigcap_{p \in \Pi \setminus \Pi^\ast} \pi_\emptyset (\mathcal{H}_{\varphi_p, a_p})\right) \cap \left(\bigcap_{p \in \Pi^\ast, p^\prime \in \Pi \setminus \Pi^\ast; p^\prime \lessdot p} \pi_\emptyset (\mathcal{H}_{\varphi_{p, p^\prime} , a_{p, p^\prime}})\right).
\end{equation}
For $p \in \Pi \setminus \Pi^\ast$, we see that $\pi_\emptyset (\mathcal{H}_{\varphi_p, a_p})$ is the set of $(x_q)_{q \in \Pi \setminus \Pi^\ast} \in M_\emptyset$ satisfying 
\[x_p + \min(\{-x_q \mid q \lessdot p, \ q \in \Pi \setminus \Pi^\ast\} \cup \{0 \mid q \lessdot p, \ q \in \Pi^\ast\}) \geq a_p,\]
that is, $x_p + u_p \geq x_q + u_q$ for all $q \in \Pi \setminus \Pi^\ast$ with $q \lessdot p$ and $x_p + u_p \geq \lambda_q$ for all $q \in \Pi^\ast$ with $q \lessdot p$. 
For $p \in \Pi^\ast$ and $p^\prime \in \Pi \setminus \Pi^\ast$ with $p^\prime \lessdot p$, we see that $\pi_\emptyset (\mathcal{H}_{\varphi_{p, p^\prime} , a_{p, p^\prime}})$ is the set of $(x_q)_{q \in \Pi \setminus \Pi^\ast} \in M_\emptyset$ satisfying $-x_{p^\prime} \geq a_{p, p^\prime}$, that is, $\lambda_p \geq x_{p^\prime} + u_{p^\prime}$. 
Since $\widehat{\mathcal{O}}(\Pi, \Pi^\ast, \lambda)$ is defined by these inequalities, we conclude \eqref{eq:PL_description_order}.
\end{proof}

\begin{rem}
Consider the marking $\lambda^r = ((\lambda^r)_a)_{a \in \Pi^\ast}$ given by $(\lambda^r)_a = r(a)$ for $a \in \Pi^\ast$, and define ${\bm u} = (u_p)_{p \in \Pi \setminus \Pi^\ast} \in {\mathcal O}(\Pi, \Pi^\ast, \lambda)  \cap \z^{\Pi \setminus \Pi^\ast}$ by $u_p \coloneqq r(p)$, which satisfies \eqref{eq:assumption}.
In this case, we have $a_p = -1$ (respectively, $a_{p,p^\prime} = -1$) for all $p \in \Pi \setminus \Pi^\ast$ (respectively, for all $p \in \Pi^\ast$ and $p^\prime \in \Pi \setminus \Pi^\ast$ with $p^\prime \lessdot p$).
Hence the PL polytope $\widehat{\Delta}(\Pi, \Pi^\ast, \lambda) \subseteq \mathcal{M}_{\mathbb{R}}$ is \emph{chart-Gorenstein-Fano} in the sense of \cite[Definition 5.21]{EHM}.
\end{rem}

\subsection{Marked posets and spaces of points}\label{ss:basic_marked_posets}

Let us consider the poset $\Pi_1$ whose Hasse diagram is given in Figure \ref{Hasse_nonmarked}, where the circles denote the elements of $\Pi_1$, and we write 
\[\Pi_1 = \{q_1, \ldots, q_n, p_1, \ldots, p_{n+1}\}.\]
Adding a new minimum element $\hat{0}$ and maximum element $\hat{1}$ to $\Pi_1$, we have $\widehat{\Pi}_1 \coloneqq \Pi_1 \sqcup \{\hat{0}, \hat{1}\}$.
The poset $\widehat{\Pi}_1$ is regarded as a marked poset with $\widehat{\Pi}_1^\ast = \{\hat{0}, \hat{1}\}$ and with some marking $\lambda$.
\begin{figure}[!ht]
\begin{center}
   \includegraphics[width=7.0cm,bb=50mm 210mm 160mm 230mm,clip]{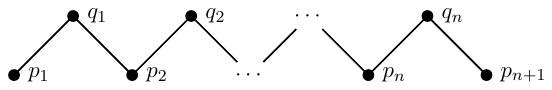}
	\caption{The Hasse diagram of $\Pi_1$.}
	\label{Hasse_nonmarked}
\end{center}
\end{figure}

Let $\mathcal{M}^{(1)}$ denote the polyptych lattice corresponding to $(\widehat{\Pi}_1, \widehat{\Pi}_1^\ast, \lambda)$, and write $M_{\mathcal{C}}^{(1)} \coloneqq \pi_{\mathcal{C}} (\mathcal{M}^{(1)})$ for $\mathcal{C} \in 2^{\widehat{\Pi}_1 \setminus \widehat{\Pi}_1^\ast} = 2^{\Pi_1}$.
Note that it suffices to consider $\mu_\mathcal{C}$ only for $\mathcal{C} \subseteq \{q_1, \ldots, q_n\}$ since $\mu_\mathcal{C} = \mu_{\mathcal{C} \cap \{q_1, \ldots, q_n\}}$ for $\mathcal{C} \in 2^{\widehat{\Pi}_1 \setminus \widehat{\Pi}_1^\ast}$.
For $\epsilon_1, \ldots, \epsilon_n \in \{\pm 1\}$, define $\sigma_{\epsilon_1, \ldots, \epsilon_n} \subseteq (\mathcal{M}^{(1)})_\mathbb{R}$ so that $\pi_\emptyset (\sigma_{\epsilon_1, \ldots, \epsilon_n}) \subseteq (M_\emptyset^{(1)})_{\mathbb{R}}$ is given by $\epsilon_1 y_1, \ldots, \epsilon_n y_n \geq 0$ for $(x_q)_{q \in \widehat{\Pi}_1 \setminus \widehat{\Pi}_1^\ast} \in (M_\emptyset^{(1)})_{\mathbb{R}}$, where $y_k \coloneqq x_{p_k} - x_{p_{k+1}}$ for $1 \leq k \leq n$.
For each $\mathcal{C} \in 2^{\widehat{\Pi}_1 \setminus \widehat{\Pi}_1^\ast}$, we see by the definition of $\mu_{\mathcal{C}}$ that $\mu_{\mathcal{C}}$ is linear on the cone $\pi_\emptyset (\sigma_{\epsilon_1, \ldots, \epsilon_n})$.
More strongly, the set $\{\sigma_{\epsilon_1, \ldots, \epsilon_n} \mid \epsilon_1, \ldots, \epsilon_n \in \{\pm 1\}\}$ forms a complete list of maximal cones of the PL fan $\Sigma(\mathcal{M}^{(1)})$. 
In particular, the linearity space $V(\mathcal{M}^{(1)}, \emptyset)$ is spanned by 
\[{\bm e}_{p_1} + \cdots + {\bm e}_{p_{n+1}}, {\bm e}_{q_1}, \ldots, {\bm e}_{q_n}.\]
For every $1 \leq k \leq n+1$, define $\varepsilon_{1,\ldots, k}, \varepsilon_{1,\ldots, k}^\prime \in \mathcal{M}^{(1)}$ by $\pi_\emptyset (\varepsilon_{1,\ldots, k}) = {\bm e}_{1, \ldots, k} \coloneqq {\bm e}_{p_1} + \cdots + {\bm e}_{p_k} \in M_\emptyset^{(1)}$ and $\pi_\emptyset (\varepsilon_{1,\ldots, k}^\prime) = -({\bm e}_{p_1} + \cdots + {\bm e}_{p_k}) \in M_\emptyset^{(1)}$.
Under the coordinate transformation 
\[(x_{q_1}, \ldots, x_{q_n}, x_{p_1}, \ldots, x_{p_{n+1}}) \mapsto (x_{q_1}, \ldots, x_{q_n}, y_1, \ldots, y_n, x_{p_{n+1}}),\]
${\bm e}_{p_1} + \cdots + {\bm e}_{p_k} \in M_\emptyset^{(1)}$ for $1 \leq k \leq n$ corresponds to the unit vector for the coordinate $y_k$.
Hence $\sigma_{\epsilon_1, \ldots, \epsilon_n}$ is spanned by 
\begin{align*}
&\sigma_{\epsilon_1, \ldots, \epsilon_n} \cap (\{\varepsilon_{1,\ldots, k}, \varepsilon_{1,\ldots, k}^\prime \mid 1 \leq k \leq n+1\} \cup \{\varepsilon_{q_1}, \ldots, \varepsilon_{q_n}, \varepsilon_{q_1}^\prime, \ldots, \varepsilon_{q_n}^\prime\})\\
&= \{\varepsilon_{1,\ldots, k}^{(\epsilon_k)} \mid 1 \leq k \leq n\} \cup \{\varepsilon_{1,\ldots, n+1}, \varepsilon_{1,\ldots, n+1}^\prime, \varepsilon_{q_1}, \ldots, \varepsilon_{q_n}, \varepsilon_{q_1}^\prime, \ldots, \varepsilon_{q_n}^\prime\}
\end{align*}
as a real cone in every coordinate $(M_{\mathcal{C}}^{(1)})_\mathbb{R}$ of $(\mathcal{M}^{(1)})_\mathbb{R}$, where 
\[\varepsilon_{1,\ldots, k}^{(\epsilon_k)} \coloneqq 
\begin{cases} 
\varepsilon_{1,\ldots, k} &\text{if}\ \epsilon_k = 1,\\ 
\varepsilon_{1,\ldots, k}^\prime &\text{if}\ \epsilon_k = -1.
\end{cases}\]
Define $\mathbb{M}^{(1)}$ (respectively, $\mathbb{M}^{(1)}_\mathbb{R}$) to be the set of 
\[(a_1, b_1, a_{1,2}, b_{1,2}, \ldots, a_{1,\ldots, n}, b_{1,\ldots, n}, a, b, c_1, \ldots, c_n) \in \mathbb{Z}^{3n+2}\ (\text{respectively,} \in \mathbb{R}^{3n+2})\] 
satisfying the following equalities: 
\[a + b = 0,\quad a_{1,\ldots, k} + b_{1,\ldots, k} = \min \{0, c_k\}\quad \text{for}\ 1 \leq k \leq n.\]

\begin{prop}\label{p:relation_sp_points_M_1}
Each point $\varphi \in {\rm Sp}(\mathcal{M}^{(1)})$ satisfies $\varphi(\varepsilon_{1,\ldots, n+1}) + \varphi(\varepsilon_{1,\ldots, n+1}^\prime) = 0$ and 
\begin{equation}\label{eq:relation_sp_points_M_1}
\varphi(\varepsilon_{1,\ldots, k}) + \varphi(\varepsilon_{1,\ldots, k}^\prime) = \min \{0, \varphi(\varepsilon_{q_k}^\prime)\}
\end{equation}
for $1 \leq k \leq n$. 
In addition, there exist bijective maps ${\rm Sp}(\mathcal{M}^{(1)}) \rightarrow \mathbb{M}^{(1)}$ and ${\rm Sp}_\mathbb{R}(\mathcal{M}^{(1)}) \rightarrow \mathbb{M}_\mathbb{R}^{(1)}$ given by 
\begin{equation}\label{eq:identification_polyptych_M_1}
\varphi \mapsto (a_1, b_1, a_{1,2}, b_{1,2}, \ldots, a_{1,\ldots, n}, b_{1,\ldots, n}, \varphi(\varepsilon_{1,\ldots, n+1}), \varphi(\varepsilon_{1,\ldots, n+1}^\prime), \varphi(\varepsilon_{q_1}^\prime), \ldots, \varphi(\varepsilon_{q_n}^\prime)),
\end{equation}
where $a_{1,\ldots, k} \coloneqq \varphi(\varepsilon_{1,\ldots, k})$ and $b_{1,\ldots, k} \coloneqq \varphi(\varepsilon_{1,\ldots, k}^\prime)$ for $1 \leq k \leq n$.
\end{prop}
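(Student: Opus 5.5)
The plan is to work entirely in the chart $M_\emptyset^{(1)}$ and make the mutations explicit. Since the elements $p_k$ are covered only by $\hat{0}\in\widehat{\Pi}_1^\ast$, for $\mathcal{C}\subseteq\{q_1,\ldots,q_n\}$ the map $\mu_{\mathcal{C}}$ fixes every $x_{p_k}$. It replaces $x_{q_k}$ by $x_{q_k}-\max\{x_{p_k},x_{p_{k+1}}\}$ for $q_k\in\mathcal{C}$. Writing $\max\{x_{p_k},x_{p_{k+1}}\}=x_{p_{k+1}}+g(y_k)$ with $g(y)\coloneqq\max\{y,0\}$, we get the following description of $m_1+_{\mathcal{C}}m_2$ in $M_\emptyset^{(1)}$:
\[
\pi_\emptyset(m_1+_{\mathcal{C}}m_2)=\pi_\emptyset(m_1)+\pi_\emptyset(m_2)+\sum_{q_k\in\mathcal{C}}\delta_k\,{\bm e}_{q_k},
\qquad \delta_k\coloneqq g(y_k^{(1)}+y_k^{(2)})-g(y_k^{(1)})-g(y_k^{(2)})\le 0 .
\]
This is the only computational input. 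Everything else is bookkeeping with it.

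\emph{The two relations.} Take $m_1=\varepsilon_{1,\ldots,n+1}$ and $m_2=\varepsilon'_{1,\ldots,n+1}$. All $y_k$ vanish, so every $\delta_k=0$ and every $m_1+_{\mathcal{C}}m_2=0$. The defining identity of a point, together with $\varphi(0)=0$ from homogeneity, then gives $\varphi(\varepsilon_{1,\ldots,n+1})+\varphi(\varepsilon'_{1,\ldots,n+1})=0$.

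Now take $m_1=\varepsilon_{1,\ldots,k}$ and $m_2=\varepsilon'_{1,\ldots,k}$. Only $y_k$ is nonzero, with $y_k^{(1)}=1$ and $y_k^{(2)}=-1$. Hence $\delta_k=-1$ and all other $\delta_j=0$. So $m_1+_{\mathcal{C}}m_2$ equals $0$ if $q_k\notin\mathcal{C}$ and $\varepsilon'_{q_k}$ if $q_k\in\mathcal{C}$. Taking the minimum over $\mathcal{C}$ gives \eqref{eq:relation_sp_points_M_1}. Thus the map \eqref{eq:identification_polyptych_M_1} lands in $\mathbb{M}^{(1)}$, and likewise for $\mathbb{R}$-points in $\mathbb{M}^{(1)}_\mathbb{R}$.

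\emph{Injectivity.} On a cone $\sigma$ of $\Sigma(\mathcal{M}^{(1)})$ all the sums $+_{\mathcal{C}}$ agree, so a point satisfies $\varphi(m_1)+\varphi(m_2)=\varphi(m_1+m_2)$ there. Hence $\varphi$ is linear on each $\sigma_{\epsilon_1,\ldots,\epsilon_n}$. By the spanning statement preceding the proposition, each maximal cone is generated by $\varepsilon^{(\epsilon_k)}_{1,\ldots,k}$, $\varepsilon^{\pm}_{1,\ldots,n+1}$ and $\varepsilon_{q_j},\varepsilon'_{q_j}$. Since ${\bm e}_{q_j}$ lies in the lineality space, $\varphi(\varepsilon_{q_j})=-\varphi(\varepsilon'_{q_j})$. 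Therefore $\varphi$ is determined by the tuple in \eqref{eq:identification_polyptych_M_1}.

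\emph{Surjectivity, the main step.} Given a tuple in $\mathbb{M}^{(1)}$ (or $\mathbb{M}^{(1)}_\mathbb{R}$), define
\[
\varphi(x)=\sum_{k=1}^n\bigl(a_{1,\ldots,k}\,g(y_k)+b_{1,\ldots,k}\,h(y_k)\bigr)+a\,x_{p_{n+1}}-\sum_{k=1}^n c_k\,x_{q_k},
\qquad h(y)\coloneqq\max\{-y,0\}.
\]
This is the function that is linear on each maximal cone and takes the prescribed values on the generators. Using $b_{1,\ldots,k}=\min\{0,c_k\}-a_{1,\ldots,k}$ and $g-h=\mathrm{id}$, it becomes
\[
\varphi=L+\sum_k \min\{0,c_k\}\,h(y_k),
\]
where $L$ is linear and contains the term $-\sum_k c_kx_{q_k}$. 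Positive homogeneity is clear.

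For the point axiom, insert the formula for $m_1+_{\mathcal{C}}m_2$. Since $g-h$ is linear, $\delta_k$ also equals $h(y^{(1)}_k+y^{(2)}_k)-h(y^{(1)}_k)-h(y^{(2)}_k)$. This gives
\[
\varphi(m_1+_{\mathcal{C}}m_2)-\varphi(m_1)-\varphi(m_2)=\sum_k\Bigl(\min\{0,c_k\}\,\delta_k-[q_k\in\mathcal{C}]\,c_k\delta_k\Bigr).
\]
The choices $q_k\in\mathcal{C}$ are independent, so the minimum over $\mathcal{C}$ splits as a sum over $k$ of $\min\{\min\{0,c_k\}\delta_k,\ \min\{0,c_k\}\delta_k-c_k\delta_k\}$. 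Since $\delta_k\le 0$, this vanishes in both cases:
\begin{itemize}
\item if $c_k\ge 0$, the first entry is $0$ and the second is $-c_k\delta_k\ge0$;
\item if $c_k<0$, the first entry is $c_k\delta_k\ge0$ and the second is $0$.
\end{itemize}
Hence $\varphi$ is a point, and it is integral when the tuple is.

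I expect this last verification, and in particular getting the sign bookkeeping of $\delta_k$ right, to be the crux. The rest follows from the unimodular cone description.
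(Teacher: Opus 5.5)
Your proof is correct and takes essentially the paper's route: your $\varphi$ is exactly the paper's inverse $\varphi_{\bm a}$ written in the coordinates $y_k=x_{p_k}-x_{p_{k+1}}$, and, as in the paper, the point axiom splits into independent choices of whether $q_k\in\mathcal{C}$. The differences are only in presentation. You run the final check through the uniform defect $\delta_k\le 0$ and the sign of $c_k$ rather than the paper's case split on the signs of $z_{1,\ldots,k},z'_{1,\ldots,k}$, and you make explicit the injectivity argument (a point is linear on the unimodular cones $\sigma_{\epsilon_1,\ldots,\epsilon_n}$) that the paper leaves implicit when it calls ${\bm a}\mapsto\varphi_{\bm a}$ the inverse; one small wording slip is that $p_k$ covers $\hat{0}$, not the other way round.
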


\begin{proof}
By the definition of ${\rm Sp}(\mathcal{M}^{(1)})$, each point $\varphi \in {\rm Sp}(\mathcal{M}^{(1)})$ satisfies 
\[\varphi(\varepsilon_{1,\ldots, k}) + \varphi(\varepsilon_{1,\ldots, k}^\prime) = \min \{\varphi(\varepsilon_{1,\ldots, k} +_{\mathcal{C}} \varepsilon_{1,\ldots, k}^\prime) \mid \mathcal{C} \in 2^{\widehat{\Pi}_1 \setminus \widehat{\Pi}_1^\ast}\}\]
for $1 \leq k \leq n+1$. 
Since $\varepsilon_{1,\ldots, n+1} +_{\mathcal{C}} \varepsilon_{1,\ldots, n+1}^\prime = 0$ for all $\mathcal{C} \in 2^{\widehat{\Pi}_1 \setminus \widehat{\Pi}_1^\ast}$, we have $\varphi(\varepsilon_{1,\ldots, n+1}) + \varphi(\varepsilon_{1,\ldots, n+1}^\prime) = 0$. 
Let $1 \leq k \leq n$, and $\mathcal{C} \in 2^{\widehat{\Pi}_1 \setminus \widehat{\Pi}_1^\ast}$.
If $q_k \notin \mathcal{C}$, then $\varepsilon_{1,\ldots, k} +_{\mathcal{C}} \varepsilon_{1,\ldots, k}^\prime = 0$.
If $q_k \notin \mathcal{C}$, then $\varepsilon_{1,\ldots, k} +_{\mathcal{C}} \varepsilon_{1,\ldots, k}^\prime = \varepsilon_{1,\ldots, k} +_{\{q_k\}} \varepsilon_{1,\ldots, k}^\prime = \varepsilon_{q_k}^\prime$. 
From these, we deduce \eqref{eq:relation_sp_points_M_1}. 
Hence we obtain a map ${\rm Sp}(\mathcal{M}^{(1)}) \rightarrow \mathbb{M}^{(1)}$ given by \eqref{eq:identification_polyptych_M_1}. 
To construct its inverse map, let us take 
${\bm a} = (a_1, b_1, \ldots, a_{1,\ldots, n}, b_{1,\ldots, n}, a, b, c_1, \ldots, c_n) \in \mathbb{M}^{(1)}$.
Then we define $\varphi_{\bm a} \in {\rm Sp}(\mathcal{M}^{(1)})$ as follows. 
Note that the set $\{{\bm e}_{1}, {\bm e}_{1, 2}, \ldots, {\bm e}_{1, \ldots, n+1}, {\bm e}_{q_1}, \ldots, {\bm e}_{q_n}\}$ forms a $\mathbb{Z}$-basis of $M_{\emptyset}^{(1)} = \mathbb{Z}^{\widehat{\Pi}_1 \setminus \widehat{\Pi}_1^\ast}$.
We take an arbitrary element ${\bm z} = z_1 {\bm e}_{1} + \cdots + z_{1, \ldots, n+1} {\bm e}_{1, \ldots, n+1} + z_{q_1} {\bm e}_{q_1} + \cdots + z_{q_n} {\bm e}_{q_n} \in M_{\emptyset}^{(1)}$ with $z_1, \ldots, z_{1, \ldots, n+1} , z_{q_1}, \ldots, z_{q_n} \in \mathbb{Z}$, and define $\varphi_{{\bm a},\emptyset} ({\bm z}) \in \mathbb{Z}$ by 
\[\varphi_{{\bm a},\emptyset} ({\bm z}) \coloneqq \varphi_{{\bm a},\emptyset}(z_1 {\bm e}_{1}) + \cdots + \varphi_{{\bm a},\emptyset}(z_{1, \ldots, n} {\bm e}_{1, \ldots, n}) + z_{1, \ldots, n+1} a - z_{q_1} c_1 - \cdots - z_{q_n} c_n,\]
where 
\[\varphi_{{\bm a},\emptyset}(z_{1, \ldots, k} {\bm e}_{1, \ldots, k}) \coloneqq 
\begin{cases} 
z_{1, \ldots, k} a_{1, \ldots, k} &\text{if}\ z_{1, \ldots, k} \geq 0,\\ 
-z_{1, \ldots, k} b_{1, \ldots, k} &\text{if}\ z_{1, \ldots, k} \leq 0
\end{cases}\]
for $1 \leq k \leq n$. 
Setting $\varphi_{\bm a} \coloneqq \varphi_{{\bm a},\emptyset} \circ \pi_\emptyset$, let us prove that $\varphi_{\bm a} \in {\rm Sp}(\mathcal{M}^{(1)})$. 
By definition, we have $\varphi_{\bm a} (k m) = k \varphi_{\bm a} (m)$ for all $m \in \mathcal{M}^{(1)}$ and $k \in \mathbb{Z}_{\geq 0}$.
Take $m, m^\prime \in \mathcal{M}^{(1)}$.
We write $\pi_\emptyset (m) = z_1 {\bm e}_{1} + \cdots + z_{1, \ldots, n+1} {\bm e}_{1, \ldots, n+1} + z_{q_1} {\bm e}_{q_1} + \cdots + z_{q_n} {\bm e}_{q_n}$ and $\pi_\emptyset (m^\prime) = z_1^\prime {\bm e}_{1} + \cdots + z_{1, \ldots, n+1}^\prime {\bm e}_{1, \ldots, n+1} + z_{q_1}^\prime {\bm e}_{q_1} + \cdots + z_{q_n}^\prime {\bm e}_{q_n}$. 
Since $\{{\bm e}_{1}, \ldots, {\bm e}_{1, \ldots, n+1}, {\bm e}_{q_1}, \ldots, {\bm e}_{q_n}\} \setminus \{{\bm e}_{1, \ldots, k}\}$ is included in the linearity space $V(\mathcal{M}^{(1)}, \mu_{\{q_k\}})$, we see by the definition of $\mu_{\mathcal{C}}$ that 
\[\pi_\emptyset (m +_\mathcal{C} m^\prime) = \sum_{1 \leq k \leq n+1} F_{k, \mathcal{C}}(z_{1, \ldots, k}, z_{1, \ldots, k}^\prime) + \sum_{1 \leq l \leq n} (z_{q_l}+ z_{q_l}^\prime) {\bm e}_{q_l},\]
where $F_{k, \mathcal{C}}(z_{1, \ldots, k}, z_{1, \ldots, k}^\prime) \coloneqq \pi_\emptyset(\pi_\emptyset^{-1} (z_{1, \ldots, k} {\bm e}_{1, \ldots, k}) +_{\mathcal{C}} \pi_\emptyset^{-1} (z_{1, \ldots, k}^\prime {\bm e}_{1, \ldots, k}))$.
Hence it follows that 
\[\varphi_{\bm a}(m +_\mathcal{C} m^\prime) = \left(\sum_{1 \leq k \leq n} \varphi_{{\bm a},\emptyset} (F_{k, \mathcal{C}}(z_{1, \ldots, k}, z_{1, \ldots, k}^\prime))\right) + (z_{1, \ldots, n+1} + z_{1, \ldots, n+1}^\prime) a - \sum_{1 \leq l \leq n} (z_{q_l}+ z_{q_l}^\prime) c_l.\]
In addition, since $F_{k, \mathcal{C}}(z_{1, \ldots, k}, z_{1, \ldots, k}^\prime)$ only depends on whether $q_k \in \mathcal{C}$ or not, we have
\begin{align*}
\min \{\varphi_{\bm a}(m +_\mathcal{C} m^\prime) \mid \mathcal{C} \in 2^{\widehat{\Pi}_1 \setminus \widehat{\Pi}_1^\ast}\} = &\left(\sum_{1 \leq k \leq n} \min\{\varphi_{{\bm a},\emptyset} (F_{k, \mathcal{C}}(z_{1, \ldots, k}, z_{1, \ldots, k}^\prime)) \mid \mathcal{C} \in 2^{\widehat{\Pi}_1 \setminus \widehat{\Pi}_1^\ast}\}\right) \\
&+ (z_{1, \ldots, n+1} + z_{1, \ldots, n+1}^\prime) a - \sum_{1 \leq l \leq n} (z_{q_l}+ z_{q_l}^\prime) c_l.
\end{align*}
Hence it suffices to show that 
\begin{equation}\label{eq:point_condition_for_varphi_empty}
\varphi_{{\bm a},\emptyset}(z_{1, \ldots, k} {\bm e}_{1, \ldots, k}) + \varphi_{{\bm a},\emptyset}(z_{1, \ldots, k}^\prime {\bm e}_{1, \ldots, k}) = \min\{\varphi_{{\bm a},\emptyset} (F_{k, \mathcal{C}}(z_{1, \ldots, k}, z_{1, \ldots, k}^\prime)) \mid \mathcal{C} \in 2^{\widehat{\Pi}_1 \setminus \widehat{\Pi}_1^\ast}\}
\end{equation}
for all $1 \leq k \leq n$.
Note that 
\[\{F_{k, \mathcal{C}}(z_{1, \ldots, k}, z_{1, \ldots, k}^\prime) \mid \mathcal{C} \in 2^{\widehat{\Pi}_1 \setminus \widehat{\Pi}_1^\ast}\} = \{F_{k, \emptyset}(z_{1, \ldots, k}, z_{1, \ldots, k}^\prime), F_{k, \{q_k\}}(z_{1, \ldots, k}, z_{1, \ldots, k}^\prime)\}\]
and that $F_{k, \emptyset}(z_{1, \ldots, k}, z_{1, \ldots, k}^\prime) = (z_{1, \ldots, k} + z_{1, \ldots, k}^\prime){\bm e}_{1, \ldots, k}$. 
Let us first consider the case $z_{1, \ldots, k} z_{1, \ldots, k}^\prime \geq 0$, that is, $z_{1, \ldots, k}, z_{1, \ldots, k}^\prime \geq 0$ or $z_{1, \ldots, k}, z_{1, \ldots, k}^\prime \leq 0$. 
Then we see by the definition of $\mu_{\{q_k\}}$ that 
\[F_{k, \{q_k\}}(z_{1, \ldots, k}, z_{1, \ldots, k}^\prime) = (z_{1, \ldots, k} + z_{1, \ldots, k}^\prime){\bm e}_{1, \ldots, k} = F_{k, \emptyset}(z_{1, \ldots, k}, z_{1, \ldots, k}^\prime)\]
and hence that 
\begin{align*}
\min\{\varphi_{{\bm a},\emptyset} (F_{k, \mathcal{C}}(z_{1, \ldots, k}, z_{1, \ldots, k}^\prime)) \mid \mathcal{C} \in 2^{\widehat{\Pi}_1 \setminus \widehat{\Pi}_1^\ast}\} &= \varphi_{{\bm a},\emptyset} ((z_{1, \ldots, k} + z_{1, \ldots, k}^\prime){\bm e}_{1, \ldots, k})\\
&= 
\begin{cases} 
(z_{1, \ldots, k} + z_{1, \ldots, k}^\prime) a_{1, \ldots, k} &\text{if}\ z_{1, \ldots, k}, z_{1, \ldots, k}^\prime \geq 0,\\ 
-(z_{1, \ldots, k} + z_{1, \ldots, k}^\prime) b_{1, \ldots, k} &\text{if}\ z_{1, \ldots, k}, z_{1, \ldots, k}^\prime \leq 0,
\end{cases}
\end{align*}
which coincides with $\varphi_{{\bm a},\emptyset}(z_{1, \ldots, k} {\bm e}_{1, \ldots, k}) + \varphi_{{\bm a},\emptyset}(z_{1, \ldots, k}^\prime {\bm e}_{1, \ldots, k})$.
Let us next consider the case $z_{1, \ldots, k} z_{1, \ldots, k}^\prime < 0$.
Without loss of generality, we may assume that $z_{1, \ldots, k} > 0$ and $z_{1, \ldots, k}^\prime < 0$.

If $z_{1, \ldots, k} + z_{1, \ldots, k}^\prime \geq 0$, then it follows by the definition of $\mu_{\{q_k\}}$ that 
\[F_{k, \{q_k\}}(z_{1, \ldots, k}, z_{1, \ldots, k}^\prime) = (z_{1, \ldots, k} + z_{1, \ldots, k}^\prime){\bm e}_{1, \ldots, k} + z_{1, \ldots, k}^\prime {\bm e}_{q_k},\]
which implies that 
\[\varphi_{{\bm a},\emptyset}(F_{k, \{q_k\}}(z_{1, \ldots, k}, z_{1, \ldots, k}^\prime)) = (z_{1, \ldots, k} + z_{1, \ldots, k}^\prime) a_{1, \ldots, k} - z_{1, \ldots, k}^\prime c_k.\]
Hence we deduce that
\begin{align*}
&\min\{\varphi_{{\bm a},\emptyset} (F_{k, \mathcal{C}}(z_{1, \ldots, k}, z_{1, \ldots, k}^\prime)) \mid \mathcal{C} \in 2^{\widehat{\Pi}_1 \setminus \widehat{\Pi}_1^\ast}\}\\
&= \min\{(z_{1, \ldots, k} + z_{1, \ldots, k}^\prime) a_{1, \ldots, k}, (z_{1, \ldots, k} + z_{1, \ldots, k}^\prime) a_{1, \ldots, k} - z_{1, \ldots, k}^\prime c_k\}\\
&= (z_{1, \ldots, k} + z_{1, \ldots, k}^\prime) a_{1, \ldots, k} - z_{1, \ldots, k}^\prime \min \{0, c_k\}\\
&= (z_{1, \ldots, k} + z_{1, \ldots, k}^\prime) a_{1, \ldots, k} - z_{1, \ldots, k}^\prime (a_{1, \ldots, k} + b_{1, \ldots, k})\\
&= z_{1, \ldots, k} a_{1, \ldots, k} - z_{1, \ldots, k}^\prime b_{1, \ldots, k} \\
&= \varphi_{{\bm a},\emptyset}(z_{1, \ldots, k} {\bm e}_{1, \ldots, k}) + \varphi_{{\bm a},\emptyset}(z_{1, \ldots, k}^\prime {\bm e}_{1, \ldots, k}).
\end{align*}

If $z_{1, \ldots, k} + z_{1, \ldots, k}^\prime \leq 0$, then it follows by the definition of $\mu_{\{q_k\}}$ that 
\[F_{k, \{q_k\}}(z_{1, \ldots, k}, z_{1, \ldots, k}^\prime) = (z_{1, \ldots, k} + z_{1, \ldots, k}^\prime){\bm e}_{1, \ldots, k} - z_{1, \ldots, k} {\bm e}_{q_k},\]
which implies that 
\[\varphi_{{\bm a},\emptyset}(F_{k, \{q_k\}}(z_{1, \ldots, k}, z_{1, \ldots, k}^\prime)) = -(z_{1, \ldots, k} + z_{1, \ldots, k}^\prime) b_{1, \ldots, k} + z_{1, \ldots, k} c_k.\]
Hence we deduce that
\begin{align*}
&\min\{\varphi_{{\bm a},\emptyset} (F_{k, \mathcal{C}}(z_{1, \ldots, k}, z_{1, \ldots, k}^\prime)) \mid \mathcal{C} \in 2^{\widehat{\Pi}_1 \setminus \widehat{\Pi}_1^\ast}\} \\
&= \min\{-(z_{1, \ldots, k} + z_{1, \ldots, k}^\prime) b_{1, \ldots, k}, -(z_{1, \ldots, k} + z_{1, \ldots, k}^\prime) b_{1, \ldots, k} + z_{1, \ldots, k} c_k\}\\
&= -(z_{1, \ldots, k} + z_{1, \ldots, k}^\prime) b_{1, \ldots, k} + z_{1, \ldots, k} \min \{0, c_k\}\\
&= -(z_{1, \ldots, k} + z_{1, \ldots, k}^\prime) b_{1, \ldots, k} + z_{1, \ldots, k} (a_{1, \ldots, k} + b_{1, \ldots, k})\\
&= z_{1, \ldots, k} a_{1, \ldots, k} - z_{1, \ldots, k}^\prime b_{1, \ldots, k} \\
&= \varphi_{{\bm a},\emptyset}(z_{1, \ldots, k} {\bm e}_{1, \ldots, k}) + \varphi_{{\bm a},\emptyset}(z_{1, \ldots, k}^\prime {\bm e}_{1, \ldots, k}).
\end{align*}
Thus, we conclude \eqref{eq:point_condition_for_varphi_empty}, which proves $\varphi_{{\bm a}} \in {\rm Sp}(\mathcal{M}^{(1)})$. 
Hence the map ${\rm Sp}(\mathcal{M}^{(1)}) \rightarrow \mathbb{M}^{(1)}$ given by \eqref{eq:identification_polyptych_M_1} is bijective since ${\bm a} \mapsto \varphi_{{\bm a}}$ is the inverse map. 
The assertion on the map ${\rm Sp}_\mathbb{R}(\mathcal{M}^{(1)}) \rightarrow \mathbb{M}_\mathbb{R}^{(1)}$ is similarly proved.
\end{proof}

We next consider a marked poset $(\Pi_2, \Pi_2^\ast, \lambda)$ whose marked Hasse diagram is given in Figure \ref{Hasse_marked}, where the circles denote the elements of $\Pi_2 \setminus \Pi_2^\ast$, and we write 
\[\Pi_2 \setminus \Pi_2^\ast = \{q_1, \ldots, q_n, p_1, \ldots, p_n\}.\]
Note that the square is only one element of $\Pi_2^\ast$, denoted by $p_{n+1}$, and the marking is given by $\lambda_{p_{n+1}} \in \mathbb{R}$.
Adding a new minimum element $\hat{0}$ and maximum element $\hat{1}$ to $\Pi_2$, we have $\widehat{\Pi}_2 \coloneqq \Pi_2 \sqcup \{\hat{0}, \hat{1}\}$.
The poset $\widehat{\Pi}_2$ is regarded as a marked poset with $\widehat{\Pi}_2^\ast = \{\hat{0}, \hat{1}, p_{n+1}\}$ and with some marking $\hat{\lambda}$ such that $\hat{\lambda}_{p_{n+1}} = \lambda_{p_{n+1}}$.
\begin{figure}[!ht]
\begin{center}
   \includegraphics[width=7.0cm,bb=50mm 210mm 160mm 230mm,clip]{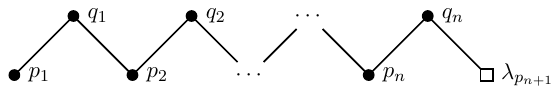}
	\caption{The marked Hasse diagram of $(\Pi_2, \Pi_2^\ast, \lambda)$.}
	\label{Hasse_marked}
\end{center}
\end{figure}

We denote by $\mathcal{M}^{(2)}$ the polyptych lattice corresponding to $(\widehat{\Pi}_2, \widehat{\Pi}_2^\ast, \hat{\lambda})$, and write $M_{\mathcal{C}}^{(2)} \coloneqq \pi_{\mathcal{C}} (\mathcal{M}^{(2)})$ for $\mathcal{C} \in 2^{\widehat{\Pi}_2 \setminus \widehat{\Pi}_2^\ast}$.
For $\epsilon_1, \ldots, \epsilon_n \in \{\pm 1\}$, define $\sigma_{\epsilon_1, \ldots, \epsilon_n} \subseteq (\mathcal{M}^{(2)})_\mathbb{R}$ so that $\pi_\emptyset (\sigma_{\epsilon_1, \ldots, \epsilon_n}) \subseteq (M_\emptyset^{(2)})_{\mathbb{R}}$ is given by $\epsilon_1 y_1, \ldots, \epsilon_n y_n \geq 0$ for $(x_q)_{q \in \widehat{\Pi}_2 \setminus \widehat{\Pi}_2^\ast} \in (M_\emptyset^{(2)})_{\mathbb{R}}$, where $y_k \coloneqq x_{p_k} - x_{p_{k+1}}$ for $1 \leq k \leq n-1$ and $y_n \coloneqq x_{p_n}$.
For each $\mathcal{C} \in 2^{\widehat{\Pi}_2 \setminus \widehat{\Pi}_2^\ast}$, we see by the definition of $\mu_{\mathcal{C}}$ that $\mu_{\mathcal{C}}$ is linear on the cone $\pi_\emptyset (\sigma_{\epsilon_1, \ldots, \epsilon_n})$.
More strongly, the set $\{\sigma_{\epsilon_1, \ldots, \epsilon_n} \mid \epsilon_1, \ldots, \epsilon_n \in \{\pm 1\}\}$ forms a complete list of maximal cones of the PL fan $\Sigma(\mathcal{M}^{(2)})$. 
In particular, the linearity space $V(\mathcal{M}^{(2)}, \emptyset)$ is spanned by ${\bm e}_{q_1}, \ldots, {\bm e}_{q_n}$.
For every $1 \leq k \leq n$, define $\varepsilon_{1,\ldots, k}, \varepsilon_{1,\ldots, k}^\prime \in \mathcal{M}^{(2)}$ by $\pi_\emptyset (\varepsilon_{1,\ldots, k}) = {\bm e}_{p_1} + \cdots + {\bm e}_{p_k} \in M_\emptyset^{(2)}$ and $\pi_\emptyset (\varepsilon_{1,\ldots, k}^\prime) = -({\bm e}_{p_1} + \cdots + {\bm e}_{p_k}) \in M_\emptyset^{(2)}$.
Under the coordinate transformation 
\[(x_{q_1}, \ldots, x_{q_n}, x_{p_1}, \ldots, x_{p_n}) \mapsto (x_{q_1}, \ldots, x_{q_n}, y_1, \ldots, y_n),\]
${\bm e}_{p_1} + \cdots + {\bm e}_{p_k} \in M_\emptyset^{(2)}$ corresponds to the unit vector for the coordinate $y_k$.
Hence $\sigma_{\epsilon_1, \ldots, \epsilon_n}$ is spanned by 
\begin{align*}
&\sigma_{\epsilon_1, \ldots, \epsilon_n} \cap (\{\varepsilon_{1,\ldots, k}, \varepsilon_{1,\ldots, k}^\prime \mid 1 \leq k \leq n\} \cup \{\varepsilon_{q_1}, \ldots, \varepsilon_{q_n}, \varepsilon_{q_1}^\prime, \ldots, \varepsilon_{q_n}^\prime\})\\
&= \{\varepsilon_{1,\ldots, k}^{(\epsilon_k)} \mid 1 \leq k \leq n\} \cup \{\varepsilon_{q_1}, \ldots, \varepsilon_{q_n}, \varepsilon_{q_1}^\prime, \ldots, \varepsilon_{q_n}^\prime\}
\end{align*}
as a real cone in every coordinate $(M_{\mathcal{C}}^{(2)})_\mathbb{R}$ of $(\mathcal{M}^{(2)})_\mathbb{R}$, where $\varepsilon_{1,\ldots, k}^{(\epsilon_k)}$ is defined as that for $\mathcal{M}^{(1)}$. 
Define $\mathbb{M}^{(2)}$ (respectively, $\mathbb{M}^{(2)}_\mathbb{R}$) to be the set of 
\[(a_1, b_1, a_{1,2}, b_{1,2}, \ldots, a_{1,\ldots, n}, b_{1,\ldots, n}, c_1, \ldots, c_n) \in \mathbb{Z}^{3n}\ (\text{respectively,}\ \mathbb{R}^{3n})\] 
satisfying the following equalities: 
\[a_{1,\ldots, k} + b_{1,\ldots, k} = \min \{0, c_k\}\quad \text{for}\ 1 \leq k \leq n.\]
In a way similar to the proof of Proposition \ref{p:relation_sp_points_M_1}, we obtain the following.

\begin{prop}\label{p:relation_sp_points_M_2}
Each point $\varphi \in {\rm Sp}(\mathcal{M}^{(2)})$ satisfies 
\begin{equation}\label{eq:relation_sp_points_M_2}
\varphi(\varepsilon_{1,\ldots, k}) + \varphi(\varepsilon_{1,\ldots, k}^\prime) = \min \{0, \varphi(\varepsilon_{q_k}^\prime)\}
\end{equation}
for $1 \leq k \leq n$. 
In addition, there exist bijective maps ${\rm Sp}(\mathcal{M}^{(2)}) \rightarrow \mathbb{M}^{(2)}$ and ${\rm Sp}_\mathbb{R}(\mathcal{M}^{(2)}) \rightarrow \mathbb{M}_\mathbb{R}^{(2)}$ given by 
\[\varphi \mapsto (a_1, b_1, a_{1,2}, b_{1,2}, \ldots, a_{1,\ldots, n}, b_{1,\ldots, n}, \varphi(\varepsilon_{q_1}^\prime), \ldots, \varphi(\varepsilon_{q_n}^\prime)),\]
where $a_{1,\ldots, k} \coloneqq \varphi(\varepsilon_{1,\ldots, k})$ and $b_{1,\ldots, k} \coloneqq \varphi(\varepsilon_{1,\ldots, k}^\prime)$ for $1 \leq k \leq n$.
\end{prop}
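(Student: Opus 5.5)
We follow the proof of Proposition \ref{p:relation_sp_points_M_1} closely; the only change is the boundary behaviour at $p_{n+1}$, which is now the marked element.

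\textbf{Step 1: the relations.} Fix $1 \leq k \leq n$. For every $\mathcal{C}$, we compute $\varepsilon_{1,\ldots,k} +_{\mathcal{C}} \varepsilon_{1,\ldots,k}^\prime$. Passing to $M_{\mathcal{C}}$, the only coordinate where $\mu_{\mathcal{C}}$ can fail to be additive on $\pm{\bm e}_{1,\ldots,k}$ is $q_k$. Indeed, $q_k$ covers $p_k$ and $p_{k+1}$, and $x_{p_k} - x_{p_{k+1}} = \pm 1$ on these vectors. For $k = n$ the cover is $p_n$ together with the marked $p_{n+1}$, which contributes $0$, and $x_{p_n} = \pm 1$. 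All other $q_l$ see a difference $0$ on both summands. Hence the sum is $0$ if $q_k \notin \mathcal{C}$, and it equals $\varepsilon_{q_k}^\prime$ if $q_k \in \mathcal{C}$. The point axiom then gives \eqref{eq:relation_sp_points_M_2}, using $\varphi(0) = 0$. This yields a well-defined map ${\rm Sp}(\mathcal{M}^{(2)}) \rightarrow \mathbb{M}^{(2)}$.

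\textbf{Step 2: the inverse.} We use the $\mathbb{Z}$-basis $\{{\bm e}_{1,\ldots,k}\}_{1 \leq k \leq n} \cup \{{\bm e}_{q_l}\}_{1 \leq l \leq n}$ of $M^{(2)}_\emptyset$. It is a basis because $y_1, \ldots, y_n$ with $y_n = x_{p_n}$ are dual coordinates. Given ${\bm a} \in \mathbb{M}^{(2)}$, set
\[\varphi_{{\bm a},\emptyset}({\bm z}) = \sum_{k=1}^n \varphi_{{\bm a},\emptyset}(z_{1,\ldots,k}{\bm e}_{1,\ldots,k}) - \sum_{l=1}^n z_{q_l} c_l,\]
with the same sign-dependent rule using $a_{1,\ldots,k}$ and $b_{1,\ldots,k}$. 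Put $\varphi_{\bm a} \coloneqq \varphi_{{\bm a},\emptyset} \circ \pi_\emptyset$. Positive homogeneity is immediate.

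\textbf{Step 3: the point condition, which is the only real work.} We need the splitting
\[\pi_\emptyset(m +_{\mathcal{C}} m^\prime) = \sum_k F_{k,\mathcal{C}}(z_{1,\ldots,k}, z_{1,\ldots,k}^\prime) + \sum_l (z_{q_l} + z_{q_l}^\prime){\bm e}_{q_l}.\]
This holds because $\mu_{\mathcal{C}}$ acts coordinatewise on the $q$-coordinates. The $q_k$-coordinate of $\mu_{\mathcal{C}}$ depends only on $y_k$, through $\min\{0, -y_k\}$ up to a linear term, for $k \leq n-1$ exactly as before. For $k = n$ the set of covers is $\{p_n, p_{n+1}\}$ with $p_{n+1}$ marked, so the term is $x_{q_n} + \min\{-x_{p_n}, 0\} = x_{q_n} + \min\{-y_n, 0\}$. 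This has the same shape, so the analysis is unchanged. Every ${\bm e}_{1,\ldots,j}$ with $j \neq k$, as well as every ${\bm e}_{q_l}$, lies in $V(\mathcal{M}^{(2)}, \mu_{\{q_k\}})$. Moreover $F_{k,\mathcal{C}}$ depends only on whether $q_k \in \mathcal{C}$.

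The minimum over $\mathcal{C}$ therefore reduces to the one-variable identity \eqref{eq:point_condition_for_varphi_empty} for each $k$. We verify it by the same case split as in Proposition \ref{p:relation_sp_points_M_1}:
\begin{itemize}
\item[(a)] if $z z^\prime \geq 0$, then both $F$ agree and the identity is immediate;
\item[(b)] if $z > 0 > z^\prime$ with $z + z^\prime \geq 0$, then $F_{k,\{q_k\}} = (z + z^\prime){\bm e}_{1,\ldots,k} + z^\prime{\bm e}_{q_k}$;
\item[(c)] if $z > 0 > z^\prime$ with $z + z^\prime \leq 0$, then $F_{k,\{q_k\}} = (z + z^\prime){\bm e}_{1,\ldots,k} - z{\bm e}_{q_k}$.
\end{itemize}
In cases (b) and (c), the relation $a_{1,\ldots,k} + b_{1,\ldots,k} = \min\{0, c_k\}$ closes the computation.

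The only point to double-check is case $k = n$, namely that $\mu_{\{q_n\}}$ produces exactly these $F$. Once confirmed, ${\bm a} \mapsto \varphi_{\bm a}$ inverts the map of Step 1, since $\varphi_{\bm a}$ recovers $a_{1,\ldots,k}$, $b_{1,\ldots,k}$ and $\varphi_{\bm a}(\varepsilon_{q_k}^\prime) = c_k$. The $\mathbb{R}$-version of the statement is proved identically.
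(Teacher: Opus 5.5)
Your proposal is correct and carries out exactly the adaptation of the proof of Proposition~\ref{p:relation_sp_points_M_1} that the paper intends, with the right change at $q_n$: the marked cover $p_{n+1}$ gives $\min\{-y_n,0\}$ with $y_n=x_{p_n}$, and the $\varepsilon_{1,\ldots,n+1}$ direction disappears. One small addition: like the paper, you only check that ${\bm a}\mapsto\varphi_{\bm a}$ is a right inverse. For the other composition, note that every $\mu_{\mathcal{C}}$ is linear on each cone $\sigma_{\epsilon_1,\ldots,\epsilon_n}$, so all sums $+_{\mathcal{C}}$ agree there and every point is additive on it; a point is therefore determined by its values on $\varepsilon_{1,\ldots,k}$, $\varepsilon_{1,\ldots,k}^\prime$, $\varepsilon_{q_l}^\prime$ and $\varepsilon_{q_l}$, with $\varphi(\varepsilon_{q_l})=-\varphi(\varepsilon_{q_l}^\prime)$, and hence $\varphi=\varphi_{\bm a}$ for ${\bm a}$ the image of $\varphi$.
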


\section{Case of Gelfand--Tsetlin posets of type $C$}\label{s:type_C}

\subsection{Spaces of points and strict dual pairs}

Fix $n \in \mathbb{Z}_{>0}$ and take $(\lambda_1, \ldots, \lambda_n) \in \mathbb{Z}^n$ such that $0 \leq \lambda_1 \leq \cdots \leq \lambda_n$.
In this section, we restrict ourselves to the \emph{Gelfand--Tsetlin poset} $(\Pi_C, \Pi^\ast_C, \lambda)$ of type $C_n$ whose marked Hasse diagram is described in Figure \ref{type_C_Hasse}, where we write 
\[\Pi_C \setminus \Pi^\ast_C = \{q_{i, j} \mid 1 \leq j \leq n,\ 1 \leq i \leq 2n+1 -2j\},\]
and denote by the circles (respectively, the squares) the elements of $\Pi_C \setminus \Pi^\ast_C$ (respectively, $\Pi^\ast_C$).
The marking $\lambda = (\lambda_a)_{a \in \Pi^\ast_C}$ is given as $(\underbrace{0, \ldots, 0}_n, \lambda_1, \lambda_{2}, \ldots, \lambda_{n})$.
For $1 \leq i \leq n$, let $q_i^\ast$ denote the only one element of $\Pi^\ast_C$ with $r(q_i^\ast) = 2i$, that is, $\lambda_{q_i^\ast} = \lambda_i$. 
\begin{figure}[!ht]
\begin{center}
   \includegraphics[width=10.0cm,bb=40mm 140mm 170mm 230mm,clip]{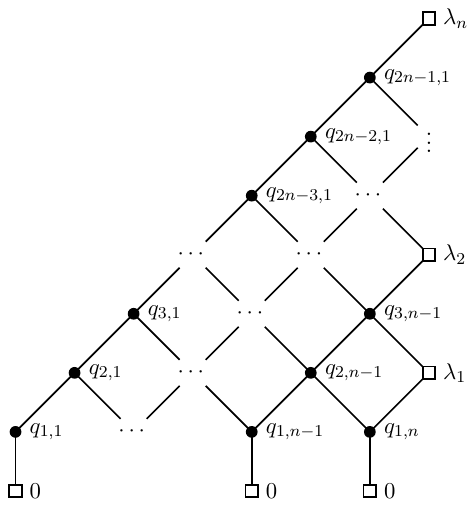}
	\caption{The marked Hasse diagram of the Gelfand--Tsetlin poset $(\Pi_C, \Pi^\ast_C, \lambda)$.}
	\label{type_C_Hasse}
\end{center}
\end{figure}

\begin{rem}
By definition, the marked order polytope $\mathcal{O}(\Pi_C, \Pi^\ast_C, \lambda)$ coincides with a Gelfand--Tsetlin polytope of type $C_n$ (see \cite[Section 6]{Lit} for the definition), and the marked chain polytope $\mathcal{C}(\Pi_C, \Pi^\ast_C, \lambda)$ coincides with an FFLV (Feigin--Fourier--Littelmann--Vinberg) polytope of type $C_n$ (see \cite[equation (2.2)]{FeFL2} for the definition). 
\end{rem}

For $i \in \mathbb{Z}_{>0}$, write $\Pi_C(i) \coloneqq \{p \in \Pi_C \mid r(p) = i\}$. 
More precisely, we have $\Pi_C(2i-1) = \{q_{2i-1, j} \mid 1 \leq j \leq n+1-i\}$ for $1 \leq i \leq n$ and $\Pi_C(2i) = \{q_{2i, j} \mid 1 \leq j \leq n-i+1\}$ for $1 \leq i \leq n$, where $q_{2i, n-i+1} \coloneqq q_i^\ast$.
Let $\mathcal{M}_C$ denote the polyptych lattice corresponding to $(\Pi_C, \Pi^\ast_C, \lambda)$, and write $M^{(C)}_{\mathcal{C}} \coloneqq \pi_{\mathcal{C}} (\mathcal{M}_C)$ for $\mathcal{C} \in 2^{\Pi_C \setminus \Pi^\ast_C}$.
We set 
\[\widehat{\Pi}_C^\circ \coloneqq \{q_{i,j} \in \Pi_C \setminus \Pi_C^\ast \mid i + 2j \leq 2n\}.\]
Note that for $q_{i,j} \in \Pi_C \setminus \Pi_C^\ast$, we have $q_{i,j} \in \widehat{\Pi}_C^\circ$ if and only if $q_{i+1,j} \in \Pi_C \setminus \Pi_C^\ast$.
For ${\bm \epsilon} = (\epsilon_{i,j} \mid q_{i,j} \in \widehat{\Pi}_C^\circ) \in \{\pm 1\}^{\widehat{\Pi}_C^\circ}$, define $\sigma_{\bm \epsilon} \subseteq (\mathcal{M}_C)_\mathbb{R}$ by 
\[\pi_\emptyset (\sigma_{\bm \epsilon}) = \{(x_{i,j})_{i,j} \in \mathbb{R}^{\Pi_C \setminus \Pi^\ast_C} \mid \epsilon_{i,j} (x_{i,j} - x_{i,j+1}) \geq 0\ \text{for all}\ q_{i,j} \in \widehat{\Pi}_C^\circ\},\]
where $x_{i,j}$ denotes the coordinate of $\mathbb{R}^{\Pi_C \setminus \Pi^\ast_C}$ corresponding to $q_{i,j} \in \Pi_C \setminus \Pi^\ast_C$. 
Then the set $\{\sigma_{\bm \epsilon} \mid {\bm \epsilon} \in \{\pm 1\}^{\widehat{\Pi}_C^\circ}\}$ forms a complete list of maximal cones of the PL fan $\Sigma(\mathcal{M}_C)$ of $\mathcal{M}_C$.
In particular, the linearity space $V(\mathcal{M}_C, \emptyset) \subseteq M_\emptyset^{(C)} = \mathbb{R}^{\Pi_C \setminus \Pi^\ast_C}$ is spanned by 
\[{\bm e}_{q_{2i-1,1}} + {\bm e}_{q_{2i-1,2}} + \cdots + {\bm e}_{q_{2i-1,n+1-i}}\]
for $1 \leq i \leq n$, where ${\bm e}_q \in \mathbb{R}^{\Pi_C \setminus \Pi^\ast_C}$ denotes the unit vector corresponding to $q \in \Pi_C \setminus \Pi^\ast_C$. 
For every $q_{i, j} \in \Pi_C \setminus \Pi^\ast_C$, define $\varepsilon_{i, \leq j}, \varepsilon_{i, \leq j}^\prime \in \mathcal{M}_C$ by $\pi_\emptyset (\varepsilon_{i, \leq j}) = {\bm e}_{q_{i,1}} + \cdots + {\bm e}_{q_{i, j}} \in M_\emptyset^{(C)}$ and $\pi_\emptyset (\varepsilon_{i, \leq j}^\prime) = -({\bm e}_{q_{i,1}} + \cdots + {\bm e}_{q_{i, j}}) \in M_\emptyset^{(C)}$.
In a way similar to that for $\mathcal{M}^{(1)}$ and $\mathcal{M}^{(2)}$ appearing in Section \ref{ss:basic_marked_posets}, we see that $\sigma_{\bm \epsilon}$ is spanned by 
\begin{align*}
&\sigma_{\bm \epsilon} \cap \{\varepsilon_{i, \leq j}, \varepsilon_{i, \leq j}^\prime \mid q_{i,j} \in \Pi_C \setminus \Pi^\ast_C\}\\
&= \{\varepsilon_{i, \leq j}^{(\epsilon_{i, j})} \mid q_{i,j} \in \widehat{\Pi}_C^\circ\} \cup \{\varepsilon_{2i-1, \leq n+1-i}, \varepsilon_{2i-1, \leq n+1-i}^\prime \mid 1 \leq i \leq n\}
\end{align*}
as a real cone in every coordinate $(M_{\mathcal{C}^{(C)}})_\mathbb{R} = \mathbb{R}^{\Pi_C \setminus \Pi^\ast_C}$ of $(\mathcal{M}_C)_\mathbb{R}$, where $\varepsilon_{i, \leq j}^{(\epsilon_{i, j})}$ is defined as $\varepsilon_{1,\ldots, k}^{(\epsilon_k)}$ for $\mathcal{M}^{(1)}$. 
Let $\mathbb{M}_C$ (respectively, $(\mathbb{M}_C)_\mathbb{R}$) denote the set of 
\[(y_{i,j}, y_{i,j}^\prime \mid q_{i, j} \in \Pi_C \setminus \Pi^\ast_C) \in (\mathbb{Z}^{\Pi_C \setminus \Pi^\ast_C})^2\ (\text{respectively,}\ \in (\mathbb{R}^{\Pi_C \setminus \Pi^\ast_C})^2)\] 
satisfying the following equalities: 
\[y_{i,j} - y_{i,j}^\prime = 
\begin{cases}
\min \{0, -y_{i+1,j}^\prime + y_{i+1,j-1}\} &\text{if}\ q_{i,j} \in \widehat{\Pi}_C^\circ,\\
0 &\text{otherwise},
\end{cases}\]
where $y_{i+1,0} \coloneqq 0$ when $j = 1$.
In a way similar to the proof of Proposition \ref{p:relation_sp_points_M_1} (see also Proposition \ref{p:relation_sp_points_M_2}), we deduce the following.

\begin{prop}\label{p:relation_sp_points_type_C_GT}
Each point $\varphi \in {\rm Sp}(\mathcal{M}_C)$ satisfies 
\[\varphi(\varepsilon_{i, \leq j}) + \varphi(\varepsilon_{i, \leq j}^\prime) = 
\begin{cases}
\min \{0, \varphi(\varepsilon_{i+1, \leq j}^\prime) + \varphi(\varepsilon_{i+1, \leq j-1})\} &\text{if}\ q_{i,j} \in \widehat{\Pi}_C^\circ,\\
0 &\text{otherwise}
\end{cases}\]
for $q_{i,j} \in \Pi_C \setminus \Pi^\ast_C$, where $\varepsilon_{i+1, \leq 0} \coloneqq 0 \in \mathcal{M}_C$ when $j = 1$. 
In addition, there exists a bijective map ${\rm Sp}(\mathcal{M}_C) \rightarrow \mathbb{M}_C$, $\varphi \mapsto (y_{i,j}, y_{i,j}^\prime \mid q_{i, j} \in \Pi_C \setminus \Pi^\ast_C)$, given by $y_{i,j} \coloneqq \varphi(\varepsilon_{i, \leq j})$ and $y_{i,j}^\prime \coloneqq -\varphi(\varepsilon_{i, \leq j}^\prime)$, which naturally extends to a bijective map ${\rm Sp}_\mathbb{R}(\mathcal{M}_C) \rightarrow (\mathbb{M}_C)_\mathbb{R}$.
\end{prop}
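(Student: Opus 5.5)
The plan is to imitate the proof of Proposition \ref{p:relation_sp_points_M_1}, treating $\mathcal{M}_C$ as a product of local pieces of the type $\mathcal{M}^{(1)}$ and $\mathcal{M}^{(2)}$.

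\emph{Step 1: the relations.} Let $\varphi \in {\rm Sp}(\mathcal{M}_C)$ and fix $q_{i,j}$. I evaluate
\[\varphi(\varepsilon_{i,\leq j}) + \varphi(\varepsilon_{i,\leq j}^\prime) = \min\{\varphi(\varepsilon_{i,\leq j} +_{\mathcal{C}} \varepsilon_{i,\leq j}^\prime) \mid \mathcal{C}\}.\]
The chart sum $\varepsilon_{i,\leq j} +_{\mathcal{C}} \varepsilon_{i,\leq j}^\prime$ depends only on whether the element covering both $q_{i,j}$ and $q_{i,j+1}$ lies in $\mathcal{C}$. That element is $q_{i+1,j}$, and it lies in $\Pi_C\setminus\Pi_C^\ast$ exactly when $q_{i,j} \in \widehat{\Pi}_C^\circ$. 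If $q_{i,j}\notin\widehat{\Pi}_C^\circ$, the sum is $0$ in every chart, so the right-hand side is $0$. If $q_{i,j}\in\widehat{\Pi}_C^\circ$ and $q_{i+1,j}\in\mathcal{C}$, I compute $\mu_{\{q_{i+1,j}\}}$ on $\pm({\bm e}_{q_{i,1}}+\cdots+{\bm e}_{q_{i,j}})$. In chart $\{q_{i+1,j}\}$, one summand has $x'_{q_{i+1,j}} = 0$ and the other has $x'_{q_{i+1,j}} = -1$, because $q_{i+1,j}$ covers $q_{i,j}$ and $q_{i,j+1}$. So the sum in that chart is $-{\bm e}_{q_{i+1,j}}$, and pulled back to $M_\emptyset$ it is $-{\bm e}_{q_{i+1,j}}$ again.

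Rewriting $-{\bm e}_{q_{i+1,j}} = \pi_\emptyset(\varepsilon_{i+1,\leq j}^\prime) + \pi_\emptyset(\varepsilon_{i+1,\leq j-1})$ is not literally a point evaluation. I therefore need the additivity
\[\varphi(\pi_\emptyset^{-1}(-{\bm e}_{q_{i+1,j}})) = \varphi(\varepsilon_{i+1,\leq j}^\prime) + \varphi(\varepsilon_{i+1,\leq j-1}).\]
I would justify it by noting that both vectors lie in a common cone $\sigma_{\bm\epsilon}$: choose $\epsilon_{i+1,j-1}=+1$ and $\epsilon_{i+1,j}=-1$. On that cone every $\mu_{\mathcal{C}_1,\mathcal{C}_2}$ is linear, so $m_1 +_{\mathcal{C}} m_2$ is independent of $\mathcal{C}$, and the point condition gives additivity. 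The case $j=1$ is handled by the convention $\varepsilon_{i+1,\leq 0}=0$. This yields the stated relation, and hence a well-defined map ${\rm Sp}(\mathcal{M}_C)\to\mathbb{M}_C$ after the sign change $y'_{i,j} = -\varphi(\varepsilon'_{i,\leq j})$.

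\emph{Step 2: the inverse map.} Given $(y_{i,j},y'_{i,j})\in\mathbb{M}_C$, I use the $\mathbb{Z}$-basis $\{{\bm e}_{q_{i,1}}+\cdots+{\bm e}_{q_{i,j}}\}$ of $M^{(C)}_\emptyset$. I define $\varphi_\emptyset$ as the sum over basis directions of the one-variable functions $z\mapsto zy_{i,j}$ for $z\ge0$ and $z\mapsto zy'_{i,j}$ for $z\le0$. For directions in the linearity space $V(\mathcal{M}_C,\emptyset)$, i.e.\ $q_{i,j}\notin\widehat{\Pi}_C^\circ$, the relation $y_{i,j}=y'_{i,j}$ makes this function linear. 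Set $\varphi=\varphi_\emptyset\circ\pi_\emptyset$.

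Homogeneity is clear. For the min condition, I use that each $\mu_{\{q_{i+1,j}\}}$ is linear along all basis directions except ${\bm e}_{q_{i,1}}+\cdots+{\bm e}_{q_{i,j}}$, and that $\mu_{\mathcal{C}}$ is a composition of such single-element maps. This should split $\pi_\emptyset(m+_{\mathcal{C}}m')$ direction by direction, as in the $F_{k,\mathcal{C}}$ decomposition of Proposition \ref{p:relation_sp_points_M_1}, and reduce the claim to one-dimensional checks.

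The extra correction term in direction $(i,j)$ is now $z\,{\bm e}_{q_{i+1,j}}$ rather than a basis vector. Its value under $\varphi_\emptyset$ is $z(-y'_{i+1,j}+y_{i+1,j-1})$ or its negative-side analogue, chosen by the sign of $z$. Then the same two-case computation as in \eqref{eq:point_condition_for_varphi_empty}, with $c_k$ replaced by $-y'_{i+1,j}+y_{i+1,j-1}$, closes the argument.

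\emph{Main obstacle.} The main obstacle is the decoupling in Step 2. Unlike for $\mathcal{M}^{(1)}$, the correction vector ${\bm e}_{q_{i+1,j}}$ is not in the linearity space. Also, sets $\mathcal{C}$ containing elements at several ranks make $\mu_{\mathcal{C}}$ a genuine composition, so the minimum over $\mathcal{C}$ need not split as a sum of independent minima. To handle this, I would first show that for fixed $m,m'$ the set $\Upsilon(m,m')$ is generated coordinatewise. Then I would argue via the cone decomposition that $\varphi$ is linear on each $\sigma_{\bm\epsilon}$; this reduces the point condition to pairs $m,m'$ in adjacent cones differing in a single sign $\epsilon_{i,j}$, where the rank-one computation applies. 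The real case is identical.
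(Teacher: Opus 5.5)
Your Step 1 is correct, and it is what the paper means by ``in a way similar to Proposition \ref{p:relation_sp_points_M_1}''. The chart sum is $-{\bm e}_{q_{i+1,j}}$ exactly when $q_{i+1,j}\in\mathcal{C}$, and additivity of a point on a common cone $\sigma_{\bm\epsilon}$ splits it as $\varepsilon_{i+1,\leq j}^\prime+\varepsilon_{i+1,\leq j-1}$. You should also state that the same cone-additivity gives injectivity of ${\rm Sp}(\mathcal{M}_C)\to\mathbb{M}_C$, since each $\sigma_{\bm\epsilon}$ is an orthant in the basis ${\bm e}_{q_{i,1}}+\cdots+{\bm e}_{q_{i,j}}$.

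The gap is in Step 2, and the remedy you sketch is not correct. Because $\mu_{\mathcal{C}}^{-1}$ is computed rank by rank, the correction created at $q_{i+1,j}$ feeds into the corrections at rank $i+2$. So $\Upsilon(m,m^\prime)$ is \emph{not} generated coordinatewise. Take $n=2$, $\pi_\emptyset(m)={\bm e}_{q_{1,1}}+{\bm e}_{q_{2,1}}$ and $\pi_\emptyset(m^\prime)=-{\bm e}_{q_{1,1}}$. Then $\pi_\emptyset(m+_{\mathcal{C}}m^\prime)$ is ${\bm e}_{q_{2,1}}$, ${\bm e}_{q_{2,1}}$, $0$, $-{\bm e}_{q_{3,1}}$ according as $\mathcal{C}\cap\{q_{2,1},q_{3,1}\}$ is $\emptyset$, $\{q_{3,1}\}$, $\{q_{2,1}\}$, $\{q_{2,1},q_{3,1}\}$. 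The $q_{3,1}$-correction appears only when $q_{2,1}\in\mathcal{C}$, and ${\bm e}_{q_{2,1}}-{\bm e}_{q_{3,1}}$ never occurs. These $m,m^\prime$ lie in adjacent cones, yet for your $\varphi$ the point condition reads
\[y_{1,1}-y_{1,1}^\prime+y_{2,1}=\min\{y_{2,1},0,-y_{3,1}\}.\]
This holds only after using \emph{both} relations $y_{1,1}-y_{1,1}^\prime=\min\{0,-y_{2,1}^\prime\}$ and $y_{2,1}-y_{2,1}^\prime=\min\{0,-y_{3,1}^\prime\}$; the single identity \eqref{eq:point_condition_for_varphi_empty} is not enough. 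Moreover, an adjacent-cone analysis can at best give superadditivity in each chart, since concavity of $\varphi\circ\pi_{\mathcal{C}}^{-1}$ is local. Attainment of the minimum is a global statement, and you give no mechanism for it.

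Two ingredients are missing.

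\textbf{Attainment.} Set $c_{i,j}\coloneqq y_{i+1,j-1}-y_{i+1,j}^\prime$ and show that $\varphi$ is linear in the single chart
\[\mathcal{C}_0\coloneqq\{q_{i+1,j}\mid q_{i,j}\in\widehat{\Pi}_C^\circ,\ c_{i,j}\leq 0\}.\]
To do this, write $\varphi=\sum\bigl(y_{i,j}^\prime z_{i,j}+(y_{i,j}-y_{i,j}^\prime)\max\{z_{i,j},0\}\bigr)$. Note that $y_{i,j}-y_{i,j}^\prime$ equals $c_{i,j}$ or $0$ according as $q_{i+1,j}\in\mathcal{C}_0$ or not. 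Substitute $\max\{z_{i,j},0\}=x_{i+1,j}-x_{i,j+1}-x^{(\mathcal{C}_0)}_{i+1,j}$, with $x_{k,l}=0$ for $q_{k,l}\notin\Pi_C\setminus\Pi_C^\ast$. The coefficients of $x_p$ for $p\in\mathcal{C}_0$ then cancel by the relations of $\mathbb{M}_C$. Hence $\varphi(m)+\varphi(m^\prime)=\varphi(m+_{\mathcal{C}_0}m^\prime)$ for all pairs.

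\textbf{Superadditivity in every chart.} Argue by induction on the number of rows, removing the bottom row. Put $D_l\coloneqq\max\{z_{1,l},0\}+\max\{\tilde z_{1,l},0\}-\max\{z_{1,l}+\tilde z_{1,l},0\}\geq 0$, where $z,\tilde z$ are the coordinates of $m,m^\prime$.
\begin{enumerate}
\item[(a)] In the truncated lattice (rows $\geq 2$, with $\mathcal{C}^\prime$ the rank-$\geq 3$ part of $\mathcal{C}$), rows $\geq 2$ of $m+_{\mathcal{C}}m^\prime$ equal $(m_{\geq 2}+_{\mathcal{C}^\prime}m^\prime_{\geq 2})+_{\mathcal{C}^\prime}\delta$. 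Here $\delta$ has chart-$\mathcal{C}^\prime$ coordinates $-\sum_{q_{2,l}\in\mathcal{C}}D_l{\bm e}_{q_{2,l}}$.
\item[(b)] Row $1$ contributes $-\sum_l\min\{0,c_{1,l}\}D_l$.
\item[(c)] Superadditivity of the truncated function (the inductive hypothesis), together with $\varphi(\pi_\emptyset^{-1}(-{\bm e}_{q_{2,l}}))=c_{1,l}$, gives $\varphi(m+_{\mathcal{C}}m^\prime)-\varphi(m)-\varphi(m^\prime)\geq\sum_l D_l\theta_l$. Here $\theta_l=\max\{0,-c_{1,l}\}$ if $q_{2,l}\notin\mathcal{C}$ and $\theta_l=\max\{c_{1,l},0\}$ if $q_{2,l}\in\mathcal{C}$, so the sum is $\geq 0$.
\end{enumerate}
This cascading bookkeeping is what the paper's appeal to Proposition \ref{p:relation_sp_points_M_1} leaves implicit, and what your Step 2 does not supply.
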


For $\mathcal{C} \in 2^{\Pi_C \setminus \Pi_C^\ast}$, define $\hat{\pi}_{\mathcal{C}} \colon \mathbb{M}_C \rightarrow \mathbb{Z}^{\Pi_C \setminus \Pi^\ast_C}$ by $\hat{\pi}_{\mathcal{C}} ({\bm y}) = (y_{i,j}^{(\mathcal{C})})_{i,j}$, where 
\[y_{i,j}^{(\mathcal{C})} \coloneqq \begin{cases}
y_{i,j} &\text{if}\ q_{i,j} \notin \mathcal{C},\\
y_{i,j}^\prime &\text{if}\ q_{i,j} \in \mathcal{C}.
\end{cases}\]
Then it follows by the definition of $\mathbb{M}_C$ that $\hat{\pi}_{\mathcal{C}}$ is bijective. 
Hence $\mathbb{M}_C$ can be identified with a polyptych lattice $\mathcal{N}_C = (\{N_{\mathcal{C}}^{(C)} \coloneqq \mathbb{Z}^{\Pi_C \setminus \Pi_C^\ast}\}_{\mathcal{C} \in 2^{\Pi_C \setminus \Pi_C^\ast}}, \{\hat{\mu}_{\mathcal{C}_1, \mathcal{C}_2}\}_{\mathcal{C}_1, \mathcal{C}_2 \in 2^{\Pi_C \setminus \Pi_C^\ast}})$ over $\mathbb{Z}$ defined so that $\pi_{\mathcal{C}} \colon \mathcal{N}_C \rightarrow \mathbb{Z}^{\Pi_C \setminus \Pi^\ast_C}$ is identical to $\hat{\pi}_{\mathcal{C}} \colon \mathbb{M}_C \rightarrow \mathbb{Z}^{\Pi_C \setminus \Pi^\ast_C}$.
In particular, the piecewise-linear transformation $\hat{\mu}_{\mathcal{C}_1, \mathcal{C}_2} = \pi_{\mathcal{C}_2} \circ \pi_{\mathcal{C}_1}^{-1}$ is given by changing the coordinate $y_{i,j}$ (respectively, $y_{i,j}^\prime$) by $y_{i,j}^\prime$ (respectively, $y_{i,j}$) if $q_{i,j} \in \mathcal{C}_2 \setminus \mathcal{C}_1$ (respectively, $q_{i,j} \in \mathcal{C}_1 \setminus \mathcal{C}_2$).
The identification $\mathbb{M}_C \simeq \mathcal{N}_C$ induces an identification $(\mathbb{M}_C)_\mathbb{R} \simeq (\mathcal{N}_C)_\mathbb{R}$. 
For ${\bm \epsilon} = (\epsilon_{i,j} \mid q_{i,j} \in \widehat{\Pi}_C^\circ) \in \{\pm 1\}^{\widehat{\Pi}_C^\circ}$, define $\sigma_{\bm \epsilon}^\prime \subseteq (\mathcal{N}_C)_\mathbb{R}$ by 
\[\sigma_{\bm \epsilon}^\prime = \{(y_{i,j}, y_{i,j}^\prime)_{i,j} \in (\mathbb{M}_C)_\mathbb{R} \mid \epsilon_{i,j} (y_{i+1,j}^\prime - y_{i+1,j-1}) \geq 0\ \text{for all}\ q_{i,j} \in \widehat{\Pi}_C^\circ\}.\]
Then the set $\{\sigma_{\bm \epsilon}^\prime \mid {\bm \epsilon} \in \{\pm 1\}^{\widehat{\Pi}_C^\circ}\}$ forms a complete list of maximal cones in the PL fan $\Sigma(\mathcal{N}_C)$ of $\mathcal{N}_C$.
In particular, the linearity space $V(\mathcal{N}_C, \emptyset) \subseteq \mathbb{R}^{\Pi_C \setminus \Pi^\ast_C}$ is spanned by ${\bm e}_{q_{1,j}}$ for $1 \leq j \leq n$.
For $q_{i,j} \in \Pi_C \setminus \Pi^\ast_C$, define $\varepsilon_{i,j} = (y_{k,l}, y_{k,l}^\prime \mid q_{k, l} \in \Pi_C \setminus \Pi^\ast_C) \in \mathbb{M}_C = \mathcal{N}_C$ by 
\[y_{k,l} = 
\begin{cases}
1 &\text{if}\ k = i,\ l \geq j,\\
-1 &\text{if}\ k = i-1,\ l \geq j,\\
0 &\text{otherwise},
\end{cases}\]
which implies that
\[y_{k,l}^\prime = 
\begin{cases}
1 &\text{if}\ k = i,\ l \geq j,\\
-1 &\text{if}\ k = i-1,\ l \geq j+1,\\
0 &\text{otherwise}.
\end{cases}\]
Similarly, we define $\varepsilon_{i,j}^\prime = (y_{k,l}, y_{k,l}^\prime \mid q_{k, l} \in \Pi_C \setminus \Pi^\ast_C) \in \mathbb{M}_C = \mathcal{N}_C$ by 
\[y_{k,l} = 
\begin{cases}
-1 &\text{if}\ k = i,\ l \geq j,\\
0 &\text{otherwise},
\end{cases}\]
which implies that $y_{k,l}^\prime = y_{k,l}$ for all $q_{k, l} \in \Pi_C \setminus \Pi^\ast_C$.
Then we see that $\sigma_{\bm \epsilon}^\prime$ is spanned by 
\begin{align*}
&\sigma_{\bm \epsilon}^\prime \cap \{\varepsilon_{i, j}, \varepsilon_{i, j}^\prime \mid q_{i,j} \in \Pi_C \setminus \Pi^\ast_C\}\\
&= \{\varepsilon_{i+1, j}^{(\epsilon_{i, j})} \mid q_{i,j} \in \widehat{\Pi}_C^\circ\} \cup \{\varepsilon_{1, j}, \varepsilon_{1, j}^\prime \mid 1 \leq j \leq n\}
\end{align*}
as a real cone in every coordinate $(N_{\mathcal{C}}^{(C)})_\mathbb{R} = \mathbb{R}^{\Pi_C \setminus \Pi^\ast_C}$ of $(\mathcal{N}_C)_\mathbb{R}$, where $\varepsilon_{i+1, j}^{(\epsilon_{i, j})}$ is defined as $\varepsilon_{1,\ldots, k}^{(\epsilon_k)}$ for $\mathcal{M}^{(1)}$. 

Define $\mathbb{M}_C^\vee$ (respectively, $(\mathbb{M}_C^\vee)_\mathbb{R}$) to be the set of 
\[(x_{i,j}, x_{i,j}^\prime \mid q_{i, j} \in \Pi_C \setminus \Pi^\ast_C) \in (\mathbb{Z}^{\Pi_C \setminus \Pi^\ast_C})^2\ (\text{respectively,}\ \in (\mathbb{R}^{\Pi_C \setminus \Pi^\ast_C})^2)\] 
satisfying the following equalities: 
\[x_{i,j}^\prime - x_{i,j} = \min \{-x_{i-1,j}, -x_{i-1,j+1}\}\]
for $q_{i,j} \in \Pi_C \setminus \Pi^\ast_C$, where we set $x_{k,l} \coloneqq 0$ unless $q_{k, l} \in \Pi_C \setminus \Pi^\ast_C$.
Then the polyptych lattice $\mathcal{M}_C$ is naturally identified with $\mathbb{M}_C^\vee$ so that $x_{i,j}$ (respectively, $x_{i,j}^\prime$) is the $q_{i, j}$-th coordinate of $M_\mathcal{C}^{(C)}$ if $q_{i, j} \notin \mathcal{C}$ (respectively, $q_{i, j} \in \mathcal{C}$). 
The identification $\mathbb{M}_C^\vee \simeq \mathcal{M}_C$ induces an identification $(\mathbb{M}_C^\vee)_\mathbb{R} \simeq (\mathcal{M}_C)_\mathbb{R}$. 
In a way similar to the proof of Proposition \ref{p:relation_sp_points_M_1}, we deduce the following.

\begin{prop}\label{p:dual_sp_points_type_C_GT}
Each point $\varphi \in {\rm Sp}(\mathcal{N}_C)$ satisfies 
\[\varphi(\varepsilon_{i, j}) + \varphi(\varepsilon_{i, j}^\prime) = \min \{\varphi(\varepsilon_{i-1, j}^\prime), \varphi(\varepsilon_{i-1, j+1}^\prime)\},\]
where $\varepsilon_{k, l}^\prime \coloneqq 0 \in \mathcal{N}_C$ unless $q_{k,l} \in \Pi_C \setminus \Pi^\ast_C$. 
In addition, there exists a bijective map ${\rm Sp}(\mathcal{N}_C) \rightarrow \mathbb{M}_C^\vee$, $\varphi \mapsto (x_{i,j}, x_{i,j}^\prime \mid q_{i, j} \in \Pi_C \setminus \Pi^\ast_C)$, given by $x_{i,j} \coloneqq -\varphi(\varepsilon_{i, j}^\prime)$ and $x_{i,j}^\prime \coloneqq \varphi(\varepsilon_{i, j})$, which naturally extends to a bijective map ${\rm Sp}_\mathbb{R}(\mathcal{N}_C) \rightarrow (\mathbb{M}_C^\vee)_\mathbb{R}$.
\end{prop}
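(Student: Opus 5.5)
The plan mirrors the proof of Proposition \ref{p:relation_sp_points_M_1}, with the roles of $\mathcal{M}_C$ and $\mathcal{N}_C$ exchanged. First I derive the relation. Take $\varphi \in {\rm Sp}(\mathcal{N}_C)$ and compute $\Upsilon(\varepsilon_{i,j}, \varepsilon_{i,j}^\prime) = \{\varepsilon_{i,j} +_{\mathcal{C}} \varepsilon_{i,j}^\prime \mid \mathcal{C}\}$ in $\mathcal{N}_C$. In a chart $N_{\mathcal{C}}^{(C)}$ the two vectors have coordinates $y^{(\mathcal{C})}$ read off from the explicit descriptions of $\varepsilon_{i,j}, \varepsilon_{i,j}^\prime$. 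Their sum is $0$ in the rows $k=i$, where the two contributions cancel, and it is supported on the row $i-1$ part coming from $\varepsilon_{i,j}$. In row $i-1$, the vector $\varepsilon_{i,j}$ has $y=-1$ for $l\ge j$ but $y^\prime=-1$ only for $l \ge j+1$. So the sum depends only on whether $q_{i-1,j} \in \mathcal{C}$, and I expect it to equal $\varepsilon_{i-1,j}^\prime$ or $\varepsilon_{i-1,j+1}^\prime$ accordingly. This has to be checked by solving the defining relations of $\mathbb{M}_C$ for the point with the prescribed $\mathcal{C}$-coordinates, which gives a point whose $y$-row $i-1$ is $-1$ from $l\ge j$ (resp. $j+1$) on. Checking that these row vectors are indeed $\varepsilon^\prime_{i-1,\cdot}$ uses the fact that $y=y^\prime$ for them, since $\min\{0,\cdot\}$ vanishes on row $i$ there. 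Boundary cases, namely $i=1$ and $q_{i-1,j+1}\notin \Pi_C\setminus\Pi_C^\ast$, are covered by the convention $\varepsilon^\prime=0$. The point axiom then gives the displayed relation.

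Next I build the map and its inverse. The relation shows that $\varphi \mapsto (x_{i,j}, x^\prime_{i,j})$ with $x_{i,j}=-\varphi(\varepsilon^\prime_{i,j})$ and $x^\prime_{i,j}=\varphi(\varepsilon_{i,j})$ lands in $\mathbb{M}_C^\vee$, since $x^\prime-x=\min\{-x_{i-1,j},-x_{i-1,j+1}\}$. Injectivity holds because $\varphi$ is linear on each maximal cone $\sigma^\prime_{\bm\epsilon}$. These cones are spanned by the listed $\varepsilon^{(\epsilon)}_{i+1,j}$ and $\varepsilon_{1,j}^{\pm}$, which form a $\mathbb{Z}$-basis of the chart $N_\emptyset^{(C)}$ up to sign choices. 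Hence $\varphi$ is determined by its values on all $\varepsilon_{i,j}, \varepsilon_{i,j}^\prime$.

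For surjectivity, given ${\bm x}\in\mathbb{M}_C^\vee$, I define $\varphi_{\bm x}$ on $N_\emptyset^{(C)}$ by expanding in this basis. It is linear with value $-x_{1,j}$ on the linear directions $\pm\varepsilon_{1,j}$, and for each non-linear direction it is given by the sign-dependent rule: value $x^\prime_{i+1,j}$ per positive unit and $x_{i+1,j}$ per negative unit. As in Proposition \ref{p:relation_sp_points_M_1}, the sum $m+_{\mathcal{C}}m^\prime$ decomposes coordinatewise, because each basis direction except one lies in the linearity space of the relevant elementary mutation. So the point condition reduces to a one-variable check for each $(i,j)$ with opposite signs $z>0>z^\prime$. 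That check is again split into the cases $z+z^\prime\ge0$ and $z+z^\prime\le0$, and it closes exactly by the identity $x^\prime+(-x)=\min\{-x_{i-1,j},-x_{i-1,j+1}\}$. The $\mathbb{R}$-version is identical.

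The main obstacle is the decoupling step. Unlike $\mathcal{M}^{(1)}$, here mutating at $q_{i,j}$ involves row $i+1$ entries $y^\prime_{i+1,j}, y_{i+1,j-1}$, so I must verify that the chosen basis makes each mutation affect only one basis coordinate, up to adding multiples of $\varepsilon^\prime_{i,\cdot}$ vectors. I would first verify this on the explicit formula for $\hat\mu_{\emptyset,\{q\}}$ and then argue that general $\mathcal{C}$ is a composition of these commuting pieces.
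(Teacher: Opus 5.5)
Your computation of $\Upsilon(\varepsilon_{i,j},\varepsilon_{i,j}^\prime)$ is correct: the sum is $\varepsilon_{i-1,j}^\prime$ or $\varepsilon_{i-1,j+1}^\prime$ according as $q_{i-1,j}\notin\mathcal{C}$ or $q_{i-1,j}\in\mathcal{C}$. The resulting relation and your injectivity argument, via linearity of points on the cones $\sigma_{\bm\epsilon}^\prime$, are also correct. Imitating Proposition~\ref{p:relation_sp_points_M_1} is all the paper itself says.

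The gap is surjectivity. The decoupling you flag as the main obstacle genuinely fails for $\mathcal{N}_C$, so the one-variable reduction cannot be carried out. In $\mathcal{M}^{(1)}$, the generators on the two sides of each wall are opposite vectors of one fixed basis, and each elementary mutation is nonlinear in only one coefficient. In $\mathcal{N}_C$ neither holds.

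First, $\pi_\emptyset(\varepsilon_{i,j})+\pi_\emptyset(\varepsilon_{i,j}^\prime)=\pi_\emptyset(\varepsilon_{i-1,j}^\prime)\neq 0$ for $i\geq 2$. So the cones $\sigma^\prime_{\bm\epsilon}$ are not sign patterns of coefficients in any fixed basis of $N_\emptyset^{(C)}$, and a sign rule gives wrong values. For $n=2$ one has $\pi_\emptyset(\varepsilon_{3,1}^\prime)=-\pi_\emptyset(\varepsilon_{3,1})-\pi_\emptyset(\varepsilon_{2,1})-\pi_\emptyset(\varepsilon_{1,1})$, so your rule assigns $-x_{3,1}-x_{2,1}-x_{1,1}$ instead of $-x_{3,1}$.

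Second, the mutations are nested: the walls $y^\prime_{i+1,j}=y_{i+1,j-1}$ are not hyperplanes in chart $\emptyset$, since $y^\prime_{i+1,j}$ is itself piecewise linear there. For $n=2$, $\hat{\mu}_{\emptyset,\{q_{1,1}\}}$ replaces $y_{1,1}$ by $y_{1,1}+\max\{0,y_{2,1}+\max\{0,y_{3,1}\}\}$. Its linearity space is $\{y_{2,1}=y_{3,1}=0\}$, which does not contain $\pi_\emptyset(\varepsilon_{3,1})$. It also does not commute with $\hat{\mu}_{\emptyset,\{q_{2,1}\}}$ as self-maps of $\mathbb{Z}^4$.

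Switching charts does not help. For $n\geq 3$, linearizing the wall of $q_{1,1}$ needs $q_{2,1}\in\mathcal{C}$, while linearizing that of $q_{1,2}$ needs $q_{2,1}\notin\mathcal{C}$. Hence $z_1+_{\mathcal{C}}z_2$ does not split into one-variable pieces, and $\min_{\mathcal{C}}$ does not split into independent minima.

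Here is one way to close the gap without decoupling. For ${\bm x}\in\mathbb{M}_C^\vee$, put $c_{k,l}\coloneqq x_{k,l}-x_{k,l+1}$, with $x_{k,l+1}\coloneqq 0$ if $q_{k,l+1}\notin\Pi_C\setminus\Pi_C^\ast$. Define $\varphi_{\bm x}(z)\coloneqq\min_{\mathcal{C}}\sum_{k,l}c_{k,l}\,(\pi_{\mathcal{C}}(z))_{k,l}$.

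For $z=(y,y^\prime)\in\mathbb{M}_C$ one has $y^\prime_{k,l}-y_{k,l}=\max\{0,y^\prime_{k+1,l}-y_{k+1,l-1}\}\geq 0$, and this difference is $0$ off $\widehat{\Pi}_C^\circ$. So for every $z$ the minimum is attained at the fixed chart $\mathcal{C}_{\bm x}\coloneqq\{q_{k,l}\in\widehat{\Pi}_C^\circ\mid c_{k,l}<0\}$, and $\varphi_{\bm x}$ is linear in that chart.

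In any chart $\mathcal{D}$, $\varphi_{\bm x}$ is a linear function of the $\mathcal{D}$-coordinates plus a nonpositive combination of the functions $y^\prime_{k,l}-y_{k,l}$. These functions are convex in the $\mathcal{D}$-coordinates by downward induction on $k$. Indeed, each of $y^\prime_{k+1,l}$ and $-y_{k+1,l-1}$ equals $\pm$ a $\mathcal{D}$-coordinate plus possibly one of the functions $y^\prime_{k+1,\cdot}-y_{k+1,\cdot}$.

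Thus $\varphi_{\bm x}\circ\pi_{\mathcal{D}}^{-1}$ is concave and positively homogeneous. This gives $\varphi_{\bm x}(z_1)+\varphi_{\bm x}(z_2)\leq\varphi_{\bm x}(z_1+_{\mathcal{D}}z_2)$ for every $\mathcal{D}$, with equality at $\mathcal{D}=\mathcal{C}_{\bm x}$. That is, $\varphi_{\bm x}\in{\rm Sp}(\mathcal{N}_C)$.

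Direct evaluation gives $\varphi_{\bm x}(\varepsilon^\prime_{i,j})=-x_{i,j}$ and $\varphi_{\bm x}(\varepsilon_{i,j})=x_{i,j}+\min\{-x_{i-1,j},-x_{i-1,j+1}\}=x^\prime_{i,j}$. Combined with your injectivity argument, this yields the bijection. The $\mathbb{R}$-version is identical.
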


\begin{ex}
Let $n = 2$. 
Then we have $\Pi_C \setminus \Pi^\ast_C = \{q_{1,1}, q_{1,2}, q_{2,1}, q_{3,1}\}$. 
In this case, $\mathbb{M}_C$ is the set of $(y_{1,1}, y_{1,2}, y_{2,1}, y_{3,1}, y_{1,1}^\prime, y_{1,2}^\prime, y_{2,1}^\prime, y_{3,1}^\prime) \in \mathbb{Z}^8$ satisfying the following equations:
\[y_{3,1}^\prime = y_{3,1},\ y_{1,2}^\prime = y_{1,2},\ y_{2,1}^\prime = y_{2,1} - \min \{0, -y_{3,1}^\prime\},\ y_{1,1}^\prime = y_{1,1} - \min \{0, -y_{2,1}^\prime\}.\]
In addition, $\mathbb{M}_C^\vee$ is the set of $(x_{1,1}, x_{1,2}, x_{2,1}, x_{3,1}, x_{1,1}^\prime, x_{1,2}^\prime, x_{2,1}^\prime, x_{3,1}^\prime) \in \mathbb{Z}^8$ satisfying the following equations:
\[x_{1,1}^\prime = x_{1,1},\ x_{1,2}^\prime = x_{1,2},\ x_{2,1}^\prime = x_{2,1} + \min \{-x_{1,1}, -x_{1,2}\},\ x_{3,1}^\prime = x_{3,1} + \min \{-x_{2,1}, 0\}.\]
\end{ex}

Let ${\mathsf v} \colon (\mathcal{M}_C)_\mathbb{R} \rightarrow {\rm Sp}_\mathbb{R}(\mathcal{N}_C)$ and ${\mathsf w} \colon (\mathcal{N}_C)_\mathbb{R} \rightarrow {\rm Sp}_\mathbb{R}(\mathcal{M}_C)$ denote the bijective maps given as the above identifications $(\mathcal{M}_C)_\mathbb{R} \simeq (\mathbb{M}_C^\vee)_\mathbb{R} \simeq {\rm Sp}_\mathbb{R}(\mathcal{N}_C)$ and $(\mathcal{N}_C)_\mathbb{R} \simeq (\mathbb{M}_C)_\mathbb{R} \simeq {\rm Sp}_\mathbb{R}(\mathcal{M}_C)$, respectively.
In particular, for $m = (x_{i,j}, x_{i,j}^\prime \mid q_{i, j} \in \Pi_C \setminus \Pi^\ast_C) \in (\mathcal{M}_C)_\mathbb{R}$, $n = (y_{i,j}, y_{i,j}^\prime \mid q_{i, j} \in \Pi_C \setminus \Pi^\ast_C) \in (\mathcal{N}_C)_\mathbb{R}$, and $q_{i,j} \in \Pi_C \setminus \Pi^\ast_C$, we have 
\begin{equation}\label{eq:dual_map}
\begin{aligned}
&({\mathsf v} (m))(\varepsilon_{i, j}) =  x_{i, j}^\prime,\quad ({\mathsf v} (m))(\varepsilon_{i, j}^\prime) = -x_{i, j},\\
&({\mathsf w} (n)) (\varepsilon_{i, \leq j}) =  y_{i, j},\quad ({\mathsf w} (n)) (\varepsilon_{i, \leq j}^\prime) = -y_{i, j}^\prime.
\end{aligned}
\end{equation}

\begin{thm}\label{t:strict_dual_pair}
The polyptych lattices $\mathcal{M}_C$ and $\mathcal{N}_C$ form a strict dual $\mathbb{Z}$-pair via the above bijective maps ${\mathsf v} \colon (\mathcal{M}_C)_\mathbb{R} \rightarrow {\rm Sp}_\mathbb{R}(\mathcal{N}_C)$ and ${\mathsf w} \colon (\mathcal{N}_C)_\mathbb{R} \rightarrow {\rm Sp}_\mathbb{R}(\mathcal{M}_C)$.
\end{thm}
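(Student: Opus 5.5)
We verify conditions (i)--(iv) of Definition \ref{d:strict_dual} in turn, using the explicit descriptions $\mathcal{M}_C \simeq \mathbb{M}_C^\vee$ and $\mathcal{N}_C \simeq \mathbb{M}_C$ together with Propositions \ref{p:relation_sp_points_type_C_GT} and \ref{p:dual_sp_points_type_C_GT}.

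\textbf{Conditions (i) and (iii).} These are essentially built into the construction. The map ${\mathsf w}$ is the composite of the identification $(\mathcal{N}_C)_\mathbb{R} \simeq (\mathbb{M}_C)_\mathbb{R}$ with the inverse of the bijection ${\rm Sp}_\mathbb{R}(\mathcal{M}_C) \to (\mathbb{M}_C)_\mathbb{R}$ of Proposition \ref{p:relation_sp_points_type_C_GT}. That bijection restricts to a bijection ${\rm Sp}(\mathcal{M}_C) \to \mathbb{M}_C$, and $\mathcal{N}_C$ corresponds to the integer points $\mathbb{M}_C$. Hence ${\mathsf w}(\mathcal{N}_C) = {\rm Sp}(\mathcal{M}_C)$ bijectively. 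The same argument with Proposition \ref{p:dual_sp_points_type_C_GT} handles ${\mathsf v}$.

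\textbf{Condition (ii).} This is the pairing identity $({\mathsf v}(m))(n) = ({\mathsf w}(n))(m)$, and it is the main computational step.

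The plan is to write both sides as explicit piecewise-linear functions of the coordinates $(x_{i,j},x'_{i,j})$ and $(y_{i,j},y'_{i,j})$. From the proof of Proposition \ref{p:relation_sp_points_M_1}, a point $\varphi={\mathsf w}(n)$ is evaluated on $m$ as follows.
\begin{itemize}
\item Expand $\pi_\emptyset(m)$ in the basis $\{\bm e_{q_{i,1}}+\cdots+\bm e_{q_{i,j}}\}$. The coefficient of the $(i,j)$-th basis vector is $z_{i,j}=x_{i,j}-x_{i,j+1}$, with $x_{i,j+1}=0$ at the end of a row.
\item Then $\varphi(m)=\sum_{i,j}\bigl(\max(z_{i,j},0)\,y_{i,j}+\max(-z_{i,j},0)\,y'_{i,j}\bigr)$; on the last entry of each row $y=y'$, so there is no sign split.
\end{itemize}
Symmetrically, ${\mathsf v}(m)$ evaluated at $n$ is given by expanding $n$ along the cone generators $\varepsilon_{i,j},\varepsilon'_{i,j}$ of $\sigma'_{\bm\epsilon}$. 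The coefficients are $w_{i,j}=y'_{i,j}-y_{i,j-1}$-type differences, and ${\mathsf v}(m)(n)=\sum(\max(w,0)x'_{i,j}+\max(-w,0)x_{i,j})$.

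It is cleaner to avoid the sign splits. Both expressions should equal a single bilinear-plus-correction expression, for instance
\[\sum_{i,j} x_{i,j}\,(y_{i,j}-y_{i,j-1}) + (\text{terms coming from } x'-x \text{ and } y-y')\]
with $y_{i,0}=0$. Each side can be reduced to this form using the defining relations of $\mathbb{M}_C$ and $\mathbb{M}_C^\vee$, namely $y-y'=\min\{0,-y'_{i+1,j}+y_{i+1,j-1}\}$ and $x'-x=\min\{-x_{i-1,j},-x_{i-1,j+1}\}$. The main obstacle is to show that the two $\min$-corrections agree after an index shift $i\leftrightarrow i+1$ (Abel summation along rows). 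If the direct computation becomes messy, a fallback is available. Both sides are piecewise bilinear and, by point-additivity, linear in $m$ on each cone of $\Sigma(\mathcal{M}_C)$ and linear in $n$ on each cone of $\Sigma(\mathcal{N}_C)$. It therefore suffices to check equality on cone generators, $m\in\{\varepsilon_{i,\le j}^{\pm}\}$ and $n\in\{\varepsilon_{k,l}^{\pm}\}$, which is a finite case check using \eqref{eq:dual_map}. The fallback still requires justifying simultaneous linearity on pairs of cones, which is the delicate point.

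\textbf{Condition (iv).} We identify ${\mathsf v}^{-1}({\rm Sp}_\mathbb{R}(\mathcal{N}_C,\mathcal{C}))$ for $\mathcal{C}\in 2^{\Pi_C\setminus\Pi_C^\ast}$. A point ${\mathsf v}(m)$ is linear on the chart $N_\mathcal{C}$ exactly when, at each $q_{i,j}$ where the two charts differ, the relevant $\min$ in the relation of $\mathbb{M}_C^\vee$ is attained on a prescribed side. This is a sign condition $\epsilon_{i-1,j}(x_{i-1,j}-x_{i-1,j+1})\ge0$, so the preimage equals $\sigma_{\bm\epsilon}$ for the ${\bm\epsilon}$ determined by $\mathcal{C}$.

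The same reasoning applies to ${\mathsf w}$ with the cones $\sigma'_{\bm\epsilon}$. One must check that $\mathcal{C}\mapsto\bm\epsilon$ is a bijection onto the index set $\{\pm1\}^{\widehat{\Pi}_C^\circ}$. This matches the maximal cones listed before Proposition \ref{p:relation_sp_points_type_C_GT}; the counting is careful bookkeeping of which $q_{i,j}$ affect linearity, and should be compared with the stated maximal-cone lists.
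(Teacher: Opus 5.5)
The architecture matches the paper's: (i) and (iii) by construction, (ii) by reduction to cone generators, (iv) by sign conditions. As written, however, conditions (ii) and (iv) are not actually proved.

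\textbf{Condition (ii).} You do not carry out the direct route, and its starting formulas need correcting.
\begin{itemize}
\item By \eqref{eq:dual_map}, $({\mathsf w}(n))(\varepsilon'_{i,\le j})=-y'_{i,j}$. So a negative coefficient contributes $-\max(-z_{i,j},0)\,y'_{i,j}$, and likewise $-\max(-w,0)\,x_{i,j}$ on the ${\mathsf v}$ side.
\item The equality $y_{i,j}=y'_{i,j}$ holds only at the end of the odd rows $i=2s-1$. The last unmarked entry $q_{2s,n-s}$ of an even row lies in $\widehat{\Pi}_C^\circ$, because its right neighbour $q_s^\ast$ is marked.
\end{itemize}
Your fallback is exactly the paper's argument, and the ``delicate point'' is immediate:
\begin{itemize}
\item For fixed $m$, ${\mathsf v}(m)$ is a point, hence additive on each cone $\sigma'_{\bm\epsilon'}$, including the linearity directions $\pm\varepsilon_{1,t}$.
\item For a fixed generator $h\in\{\varepsilon_{i,j},\varepsilon'_{i,j}\}$, \eqref{eq:dual_map} says $m\mapsto({\mathsf v}(m))(h)$ is $x'_{i,j}$ or $-x_{i,j}$. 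That is $\pm$ a chart coordinate of $m$, hence linear on each $\sigma_{\bm\epsilon}$.
\end{itemize}
With cone expansions $m=\sum_g a_g g$ and $n=\sum_h b_h h$, this gives $({\mathsf v}(m))(n)=\sum a_g b_h\,({\mathsf v}(g))(h)$, and symmetrically $({\mathsf w}(n))(m)=\sum a_g b_h\,({\mathsf w}(h))(g)$. So (ii) reduces to the four-case table \eqref{p:strict_dual_basis}, which you still have to compute.

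\textbf{Condition (iv).} You only have the easy half: if ${\mathsf v}(m)$ is linear on $N_{\mathcal{C}}$, testing on the pairs $(\varepsilon_{i,j},\varepsilon'_{i,j})$ forces sign conditions on $m$. The converse is the real content: every such $m$ must give a point linear on the whole chart. The paper gets it from \cite[Proposition 3.10]{EHM} plus the argument of Proposition \ref{p:relation_sp_points_M_1}.

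Once (ii) is known there is a shortcut. For $m\in\sigma_{\bm\epsilon}$, expand $m$ in the generators of $\sigma_{\bm\epsilon}$. Then $({\mathsf v}(m))(n)=({\mathsf w}(n))(m)$ is a linear combination of $y_{k,l}$ (for $\epsilon_{k,l}=1$), $y'_{k,l}$ (for $\epsilon_{k,l}=-1$), and $y_{2t-1,n+1-t}=y'_{2t-1,n+1-t}$. This is linear in the chart $N_{\mathcal{C}}$ with $\mathcal{C}\cap\widehat{\Pi}_C^\circ=\{q_{k,l}\mid\epsilon_{k,l}=-1\}$.

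On this side the sign at $q_{k,l}$ is governed by $q_{k,l}$ itself, not by $q_{k+1,l}$ as on the $\mathcal{M}_C$ side. Indeed, for $i\geq 2$, in $\mathcal{N}_C$ one has $\varepsilon_{i,j}+_{\mathcal{C}}\varepsilon'_{i,j}=\varepsilon'_{i-1,j}$ if $q_{i-1,j}\notin\mathcal{C}$ and $\varepsilon'_{i-1,j+1}$ if $q_{i-1,j}\in\mathcal{C}$. The corresponding display in the ${\mathsf v}$-half of the paper's proof is misstated.

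Finally, the bijectivity you propose to check is false as formulated. The set $2^{\Pi_C\setminus\Pi_C^\ast}$ has $2^{n^2}$ elements, while $\{\pm1\}^{\widehat{\Pi}_C^\circ}$ has $2^{n^2-n}$.
\begin{itemize}
\item The chart $M_{\mathcal{C}}$ depends only on $\mathcal{C}\cap\{q_{i,j}\mid i\geq 2\}$, since each $q_{1,j}$ covers only marked elements and $\mu_{\mathcal{C}}$ ignores it.
\item The chart $N_{\mathcal{C}}$ depends only on $\mathcal{C}\cap\widehat{\Pi}_C^\circ$, since $y_q=y'_q$ off $\widehat{\Pi}_C^\circ$.
\end{itemize}
So (iv) can hold only after identifying coinciding charts, a point the paper's proof also passes over.
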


\begin{proof}
The conditions (i) and (iii) in Definition \ref{d:strict_dual} are obvious from the above constructions of ${\mathsf v}$ and ${\mathsf w}$.

Let us prove condition (ii).
By \eqref{eq:dual_map}, we see that 
\begin{equation}\label{p:strict_dual_basis}
\begin{aligned}
&({\mathsf v} (\varepsilon_{k, \leq l}))(\varepsilon_{i, j}) = ({\mathsf w} (\varepsilon_{i, j}))(\varepsilon_{k, \leq l}) =
\begin{cases}
1 &\text{if}\ k = i,\ l \geq j,\\
-1 &\text{if}\ i = k+1,\ j \leq l,\\
0 &\text{otherwise},
\end{cases}\\
&({\mathsf v} (\varepsilon_{k, \leq l}))(\varepsilon_{i, j}^\prime) = ({\mathsf w} (\varepsilon_{i, j}^\prime))(\varepsilon_{k, \leq l}) =
\begin{cases}
-1 &\text{if}\ k = i,\ l \geq j,\\
0 &\text{otherwise},
\end{cases}\\
&({\mathsf v} (\varepsilon_{k, \leq l}^\prime))(\varepsilon_{i, j}) = ({\mathsf w} (\varepsilon_{i, j}))(\varepsilon_{k, \leq l}^\prime) =
\begin{cases}
-1 &\text{if}\ k = i,\ l \geq j,\\
1 &\text{if}\ i = k+1,\ j \leq l-1,\\
0 &\text{otherwise},
\end{cases}\\
&({\mathsf v} (\varepsilon_{k, \leq l}^\prime))(\varepsilon_{i, j}^\prime) = ({\mathsf w} (\varepsilon_{i, j}^\prime))(\varepsilon_{k, \leq l}^\prime) = 
\begin{cases}
1 &\text{if}\ k = i,\ l \geq j,\\
0 &\text{otherwise}.
\end{cases}
\end{aligned}
\end{equation}
For $m = (x_{i,j}, x_{i,j}^\prime \mid q_{i, j} \in \Pi_C \setminus \Pi^\ast_C) \in \mathcal{M}_C$ and $n = (y_{i,j}, y_{i,j}^\prime \mid q_{i, j} \in \Pi_C \setminus \Pi^\ast_C) \in \mathcal{N}_C$, let us take ${\bm \epsilon} = (\epsilon_{i,j} \mid q_{i,j} \in \widehat{\Pi}_C^\circ) \in \{\pm 1\}^{\widehat{\Pi}_C^\circ}$ and ${\bm \epsilon}^\prime = (\epsilon_{i,j}^\prime \mid q_{i,j} \in \widehat{\Pi}_C^\circ) \in \{\pm 1\}^{\widehat{\Pi}_C^\circ}$ such that $m \in \sigma_{\bm \epsilon}$ and $n \in \sigma_{{\bm \epsilon}^\prime}^\prime$. 
Then there exist $a_{i,j} \in \mathbb{Z}_{\geq 0}$ for $q_{i,j} \in \widehat{\Pi}_C^\circ$ and $c_1, \ldots, c_n \in \mathbb{Z}$ such that 
\[\pi_{\mathcal{C}} (m) = \sum_{q_{i,j} \in \widehat{\Pi}_C^\circ} a_{i,j} \pi_{\mathcal{C}} (\varepsilon_{i, \leq j}^{(\epsilon_{i, j})}) + \sum_{1 \leq t \leq n} c_t \pi_{\mathcal{C}} (\varepsilon_{2t-1, \leq n+1-t})\]
for all $\mathcal{C} \in 2^{\Pi_C \setminus \Pi^\ast_C}$.
Similarly, there exist $a_{i,j}^\prime \in \mathbb{Z}_{\geq 0}$ for $q_{i,j} \in \widehat{\Pi}_C^\circ$ and $c_1^\prime, \ldots, c_n^\prime \in \mathbb{Z}$ such that 
\[\pi_{\mathcal{C}} (n) = \sum_{q_{i,j} \in \widehat{\Pi}_C^\circ} a_{i,j}^\prime \pi_{\mathcal{C}} (\varepsilon_{i+1, j}^{(\epsilon_{i, j}^\prime)}) + \sum_{1 \leq t \leq n} c_t^\prime \pi_{\mathcal{C}} (\varepsilon_{1, t})\]
for all $\mathcal{C} \in 2^{\Pi_C \setminus \Pi^\ast_C}$.
Hence we have 
\begin{align*}
({\mathsf v} (m))(n) =& \sum_{(k,l),(i,j)} a_{k,l} a_{i,j}^\prime ({\mathsf v} (\varepsilon_{k, \leq l}^{(\epsilon_{k, l})}))(\varepsilon_{i+1, j}^{(\epsilon_{i, j}^\prime)}) + \sum_{s,(i,j)} c_s a_{i,j}^\prime ({\mathsf v} (\varepsilon_{2s-1, \leq n+1-s}))(\varepsilon_{i+1, j}^{(\epsilon_{i, j}^\prime)})\\ 
&+ \sum_{(k,l),t} a_{k,l} c_{t}^\prime ({\mathsf v} (\varepsilon_{k, \leq l}^{(\epsilon_{k, l})}))(\varepsilon_{1, t}) + \sum_{s,t} c_s c_t^\prime ({\mathsf v} (\varepsilon_{2s-1, \leq n+1-s}))(\varepsilon_{1, t})
\end{align*}
and
\begin{align*}
({\mathsf w} (n))(m) =& \sum_{(k,l),(i,j)} a_{k,l} a_{i,j}^\prime ({\mathsf w} (\varepsilon_{i+1, j}^{(\epsilon_{i, j}^\prime)}))(\varepsilon_{k, \leq l}^{(\epsilon_{k, l})}) + \sum_{s,(i,j)} c_s a_{i,j}^\prime ({\mathsf w} (\varepsilon_{i+1, j}^{(\epsilon_{i, j}^\prime)}))(\varepsilon_{2s-1, \leq n+1-s})\\ 
&+ \sum_{(k,l),t} a_{k,l} c_{t}^\prime ({\mathsf w} (\varepsilon_{1, t}))(\varepsilon_{k, \leq l}^{(\epsilon_{k, l})}) + \sum_{s,t} c_s c_t^\prime ({\mathsf w} (\varepsilon_{1, t}))(\varepsilon_{2s-1, \leq n+1-s}),
\end{align*}
which implies by \eqref{p:strict_dual_basis} that $({\mathsf v} (m))(n) = ({\mathsf w} (n))(m)$.
This proves condition (ii) in Definition \ref{d:strict_dual}. 

Fix $\mathcal{C}_0 \in 2^{\Pi_C \setminus \Pi^\ast_C}$.
To prove condition (iv) in Definition \ref{d:strict_dual}, let us first show that ${\mathsf w}^{-1}({\rm Sp}_\mathbb{R}(\mathcal{M}_C, \mathcal{C}_0))$ coincides with $\sigma_{{\bm \epsilon}(\mathcal{C}_0)}^\prime$, where ${\bm \epsilon}(\mathcal{C}_0) = (\epsilon_{i,j} \mid q_{i,j} \in \widehat{\Pi}_C^\circ)$ is given by 
\[\epsilon_{i,j} = \begin{cases}
1 &\text{if}\ q_{i+1,j} \in \mathcal{C}_0,\\
-1 &\text{otherwise}.
\end{cases}\]
By \cite[Proposition 3.10]{EHM}, it suffices to prove that for ${\bm y} = (y_{i,j}, y_{i,j}^\prime \mid q_{i, j} \in \Pi_C \setminus \Pi^\ast_C) \in (\mathcal{N}_C)_\mathbb{R}$, we have ${\bm y} \in \sigma_{{\bm \epsilon}(\mathcal{C}_0)}^\prime$ if and only if 
\begin{equation}\label{eq:duality_linearity}
{\mathsf w} ({\bm y}) (m +_{\mathcal{C}_0} m^\prime) = \min\{{\mathsf w} ({\bm y}) (m +_{\mathcal{C}} m^\prime) \mid \mathcal{C} \in 2^{\Pi_C \setminus \Pi^\ast_C}\}
\end{equation}
for all $m, m^\prime \in (\mathcal{M}_C)_\mathbb{R}$. 
For $q_{i,j} \in \Pi_C \setminus \Pi^\ast_C$, we see by \eqref{eq:dual_map} that 
\begin{align*}
\min\{{\mathsf w} ({\bm y}) (\varepsilon_{i, \leq j} +_{\mathcal{C}} \varepsilon_{i, \leq j}^\prime) \mid \mathcal{C} \in 2^{\Pi_C \setminus \Pi^\ast_C}\} &= {\mathsf w} ({\bm y}) (\varepsilon_{i, \leq j}) + {\mathsf w} ({\bm y}) (\varepsilon_{i, \leq j}^\prime)\\ 
&= y_{i,j} -y_{i,j}^\prime \\
&= \begin{cases}
\min \{0, -y_{i+1,j}^\prime + y_{i+1,j-1}\} &\text{if}\ q_{i,j} \in \widehat{\Pi}_C^\circ,\\
0 &\text{otherwise}.
\end{cases}
\end{align*}
Since
\[{\mathsf w} ({\bm y}) (\varepsilon_{i, \leq j} +_{\mathcal{C}_0} \varepsilon_{i, \leq j}^\prime) = 
\begin{cases}
{\mathsf w} ({\bm y}) (\varepsilon_{i+1, \leq j}^\prime) + {\mathsf w} ({\bm y}) (\varepsilon_{i+1, \leq j-1}) = -y_{i+1,j}^\prime + y_{i+1,j-1}&\text{if}\ q_{i+1,j} \in \mathcal{C}_0,\\
0&\text{otherwise}, 
\end{cases}\]
it follows that 
\[{\mathsf w} ({\bm y}) (\varepsilon_{i, \leq j} +_{\mathcal{C}_0} \varepsilon_{i, \leq j}^\prime) =   \min\{{\mathsf w} ({\bm y}) (\varepsilon_{i, \leq j} +_{\mathcal{C}} \varepsilon_{i, \leq j}^\prime) \mid \mathcal{C} \in 2^{\Pi_C \setminus \Pi^\ast_C}\}\]
for all $q_{i, j} \in \Pi_C \setminus \Pi^\ast_C$ if and only if $y_{i+1,j}^\prime \geq y_{i+1,j-1}$ for all $q_{i,j} \in \widehat{\Pi}_C^\circ$ with $q_{i+1,j} \in \mathcal{C}_0$ and $y_{i+1,j}^\prime \leq y_{i+1,j-1}$ for all $q_{i,j} \in \widehat{\Pi}_C^\circ$ with $q_{i+1,j} \notin \mathcal{C}_0$, which is equivalent to the condition that ${\bm y} \in \sigma_{{\bm \epsilon}(\mathcal{C}_0)}^\prime$.
Using the argument in the proof of Proposition \ref{p:relation_sp_points_M_1}, we deduce that this condition is equivalent to \eqref{eq:duality_linearity}, which proves the assertion. 

Similarly, for ${\bm x} = (x_{i,j}, x_{i,j}^\prime \mid q_{i, j} \in \Pi_C \setminus \Pi^\ast_C) \in (\mathcal{M}_C)_\mathbb{R}$ and $q_{i,j} \in \Pi_C \setminus \Pi^\ast_C$,
\begin{align*}
\min\{{\mathsf v} ({\bm x}) (\varepsilon_{i, j} +_{\mathcal{C}} \varepsilon_{i, j}^\prime) \mid \mathcal{C} \in 2^{\Pi_C \setminus \Pi^\ast_C}\} &= {\mathsf v} ({\bm x}) (\varepsilon_{i, j}) + {\mathsf v} ({\bm x}) (\varepsilon_{i, j}^\prime)\\ 
&= x_{i,j}^\prime -x_{i,j} \\
&= \min \{-x_{i-1,j}, -x_{i-1,j+1}\}.
\end{align*}
Since
\[{\mathsf v} ({\bm x}) (\varepsilon_{i, j} +_{\mathcal{C}_0} \varepsilon_{i, j}^\prime) = 
\begin{cases}
{\mathsf v} ({\bm x}) (\varepsilon_{i-1, j}^\prime) = -x_{i-1,j}&\text{if}\ q_{i,j} \in \mathcal{C}_0,\\
0&\text{otherwise}, 
\end{cases}\]
it follows that \[{\mathsf v} ({\bm x}) (\varepsilon_{i, j} +_{\mathcal{C}_0} \varepsilon_{i, j}^\prime) =   \min\{{\mathsf v} ({\bm x}) (\varepsilon_{i, j} +_{\mathcal{C}} \varepsilon_{i, j}^\prime) \mid \mathcal{C} \in 2^{\Pi_C \setminus \Pi^\ast_C}\}\]
for all $q_{i, j} \in \Pi_C \setminus \Pi^\ast_C$ if and only if ${\bm x} \in \sigma_{{\bm \epsilon}(\mathcal{C}_0)}$. 
As in the proof of the assertion on ${\mathsf w}^{-1}({\rm Sp}_\mathbb{R}(\mathcal{M}_C, \mathcal{C}_0))$, we conclude that ${\mathsf v}^{-1}({\rm Sp}_\mathbb{R}(\mathcal{N}_C, \mathcal{C}_0))$ coincides with $\sigma_{{\bm \epsilon}(\mathcal{C}_0)}$. 
This proves condition (iv) in Definition \ref{d:strict_dual}, which shows the theorem.
\end{proof}

By \cite[Lemma 4.2]{EHM}, the polyptych lattices $\mathcal{M}_C$ and $\mathcal{N}_C$ are both full in the sense of \cite[Definition 3.11]{EHM}.

\subsection{Detropicalizations and toric degenerations}

Let $\Bbbk$ be an algebraically closed field of characteristic $0$, and
consider a polynomial ring $\Bbbk [X_q, Y_q \mid q \in \Pi_C \setminus \Pi^\ast_C]$.
For $q_{i, j} \in \Pi_C \setminus \Pi^\ast_C$, we write $X_{i,j} \coloneqq X_{q_{i,j}}$ and $Y_{i,j} \coloneqq Y_{q_{i,j}}$.
Define $I = I_C = (g_q \mid q \in \Pi_C \setminus \Pi^\ast_C)$, $\mathcal{A} = \mathcal{A}_C$, and ${\bf B} = {\bf B}_C$ as in Introduction. 
More precisely, $g_{i,j} \coloneqq g_{q_{i,j}}$ for $q_{i,j} \in \Pi_C \setminus \Pi^\ast_C$ is given by 
\[g_{i,j} = X_{i,j} Y_{i,j} - 1 - X_{i+1,j-1} Y_{i+1, j} \in \Bbbk [X_q, Y_q \mid q \in \Pi_C \setminus \Pi^\ast_C],\]
where $X_{i+1,j-1} \coloneqq 1$ if $j = 1$ and $Y_{i+1, j} \coloneqq 0$ unless $q_{i+1,j} \in \Pi_C \setminus \Pi^\ast_C$; cf.\ \cref{p:relation_sp_points_type_C_GT}.
In addition, we have $\mathcal{A}_C = \Bbbk [X_q, Y_q \mid q \in \Pi_C \setminus \Pi^\ast_C]/I_C$ and
\begin{equation}\label{eq:standard_monomial_type_C}
\begin{aligned}
{\bf B}_C = \left\{\prod_{q \in \Pi_C \setminus \Pi^\ast_C} X_q^{a_q} Y_q^{b_q} \mid \min\{a_q, b_q\} = 0\ \text{for all}\ q \in \Pi_C \setminus \Pi^\ast_C\right\}.
\end{aligned}
\end{equation}
Hence we obtain a bijective map 
\[{\bf B}_C \rightarrow \mathbb{Z}^{\Pi_C \setminus \Pi^\ast_C},\quad \prod_{q \in \Pi_C \setminus \Pi^\ast_C} X_{q}^{a_q} Y_{q}^{b_q} \mapsto (a_q -b_q)_{q \in \Pi_C \setminus \Pi^\ast_C}.\]
For $f \in \Bbbk [X_q, Y_q \mid q \in \Pi_C \setminus \Pi^\ast_C]$, set $\overline{f} \coloneqq f \bmod I_C \in \mathcal{A}_C$. 
Fix a total order on the set $\{X_{i,j}, Y_{i,j} \mid q_{i, j} \in \Pi_C \setminus \Pi^\ast_C\}$ of variables such that $X_{i,j} > X_{i+1,j-1}$ and $Y_{i,j} > Y_{i+1, j}$ for $q_{i,j} \in \widehat{\Pi}_C^\circ$, and define a monomial order $<$ on $\Bbbk [X_{i,j}, Y_{i,j} \mid q_{i, j} \in \Pi_C \setminus \Pi^\ast_C]$ to be the corresponding lexicographic order.
Then $\{g_{i,j} \mid q_{i,j} \in \Pi_C \setminus \Pi^\ast_C\}$ forms a Gr\"{o}bner basis of $I_C$ with respect to the monomial order $<$ since the leading monomials of these polynomials $g_{i,j}$ for $q_{i, j} \in \Pi_C \setminus \Pi^\ast_C$ are relatively prime.
In addition, ${\bf B}_C$ is the set of standard monomials with respect to this Gr\"{o}bner basis. 

\begin{ex}
Let $n = 2$. 
Then we have 
\[\mathcal{A}_C = \Bbbk [X_q, Y_q \mid q = q_{1,1}, q_{1,2}, q_{2,1}, q_{3,1}]/I_C,\]
where 
\[I_C = (X_{3,1} Y_{3,1} - 1,\ X_{2,1} Y_{2,1} - 1 - Y_{3,1},\ X_{1,2} Y_{1,2} - 1,\ X_{1,1} Y_{1,1} - 1 - Y_{2,1}).\]
The total order $<$ satisfies $Y_{1,1} > Y_{2,1} > Y_{3,1}$, which implies that $X_{2,1} Y_{2,1}$ and $X_{1,1} Y_{1,1}$ are the initial terms of $X_{2,1} Y_{2,1} - 1 - Y_{3,1}$ and $X_{1,1} Y_{1,1} - 1 - Y_{2,1}$, respectively.
\end{ex}

We arrange the elements of $\Pi_C \setminus \Pi^\ast_C$ as 
\begin{equation}\label{eq:arrangement_type_C_nonmarked}
\begin{aligned}
(q_1, \ldots, q_{n^2}) \coloneqq (q_{1,1}, q_{1,2}, \ldots, q_{1,n}, q_{2,1}, \ldots, q_{2n-2,1}, q_{2n-1,1}),
\end{aligned}
\end{equation}
and write $X_k \coloneqq X_{i,j}$, $Y_k \coloneqq Y_{i,j}$, and $g_k \coloneqq g_{i,j}$ when $q_k = q_{i,j}$.
Note that for $1 \leq k < \ell \leq n^2$, the polynomial $g_{\ell}$ does not contain the variables $X_k, Y_k$.
We set 
\[\mathcal{A}_C^{(k)} \coloneqq \Bbbk [X_k, \ldots, X_{n^2}, Y_k, \ldots, Y_{n^2}]/(g_k, \ldots, g_{n^2})\]
for $1 \leq k \leq n^2$. 
For $1 < k \leq n^2$, the inclusion map 
\[\Bbbk [X_k, \ldots, X_{n^2}, Y_k, \ldots, Y_{n^2}] \hookrightarrow \Bbbk [X_{k-1}, \ldots, X_{n^2}, Y_{k-1}, \ldots, Y_{n^2}]\] 
induces an injective $\Bbbk$-algebra homomorphism $\mathcal{A}_C^{(k)} \hookrightarrow \mathcal{A}_C^{(k-1)}$.
In particular, we can regard $\mathcal{A}_C^{(2)}, \ldots, \mathcal{A}_C^{(n^2)}$ as $\Bbbk$-subalgebras of $\mathcal{A}_C^{(1)} = \mathcal{A}_C$.

\begin{prop}\label{p:integral_domain}
The $\Bbbk$-algebra $\mathcal{A}_C$ is an integral domain. 
\end{prop}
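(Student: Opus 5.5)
We argue by descending induction on $k$, showing that each $\mathcal{A}_C^{(k)}$ is an integral domain; the case $k = 1$ is the proposition. We use the arrangement \eqref{eq:arrangement_type_C_nonmarked}, under which $g_\ell$ does not involve $X_k, Y_k$ for $k < \ell$. Hence
\[\mathcal{A}_C^{(k)} \cong \mathcal{A}_C^{(k+1)}[X_k, Y_k]/(X_k Y_k - h_k),\]
where $h_k \in \mathcal{A}_C^{(k+1)}$ is the image of $1 + X_{i+1,j-1}Y_{i+1,j}$ (for $q_k = q_{i,j}$). Here $h_k = 1$ if $q_{i+1,j} \notin \Pi_C\setminus\Pi_C^\ast$, and $h_k = 1 + \overline{Y_{i+1,1}}$ or $1 + \overline{X_{i+1,j-1}Y_{i+1,j}}$ otherwise. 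The variables occurring in $h_k$ belong to rows with larger first index, hence to $\mathcal{A}_C^{(k+1)}$. The base case $k = n^2$ gives $\Bbbk[X,Y]/(XY-1) \cong \Bbbk[X^{\pm1}]$, a domain.

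The key lemma is elementary. Let $R$ be a domain and $0 \neq h \in R$. Then $R[X,Y]/(XY-h)$ is a domain. To see this, note that $(XY-h)$ is the kernel of the map $R[X,Y] \to R[X^{\pm 1}]$ given by $Y \mapsto hX^{-1}$. Every class has a unique representative $\sum_{a\ge0} r_a X^a + \sum_{b>0} s_b Y^b$, since the standard monomials are those in $\mathbf{B}$. Its image is $\sum r_a X^a + \sum s_b h^b X^{-b}$, which vanishes only if all $r_a$ and $s_b$ vanish, because $h$ is a nonzero element of a domain. So the quotient embeds in the domain $R[X^{\pm1}]$. Alternatively, $XY - h$ is irreducible in $\operatorname{Frac}(R)[X,Y]$ and primitive, but the embedding argument is cleaner.

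The remaining step, which we expect to be the main obstacle, is checking that $h_k \neq 0$ in $\mathcal{A}_C^{(k+1)}$. We use the fact that $\mathcal{A}_C^{(k+1)}$ has the $\Bbbk$-basis of standard monomials: $\{g_\ell\}_{\ell > k}$ is a Gröbner basis, since the leading monomials $X_\ell Y_\ell$ are pairwise coprime. The polynomials $1 + Y_{i+1,1}$ and $1 + X_{i+1,j-1}Y_{i+1,j}$ are already sums of standard monomials, because they contain no product $X_qY_q$ (note $q_{i+1,j-1} \ne q_{i+1,j}$). Thus they are distinct nonzero combinations of basis elements, so $h_k \neq 0$. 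Induction then yields that $\mathcal{A}_C = \mathcal{A}_C^{(1)}$ is an integral domain.
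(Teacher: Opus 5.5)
Your proof is correct and follows essentially the same route as the paper. Both arguments peel off the pair $X_k, Y_k$ along the fixed row-by-row arrangement of $\Pi_C \setminus \Pi_C^\ast$ and descend to $\mathcal{A}_C^{(k+1)}$. The only difference is the one-step extension: the paper uses the $\mathbb{Z}$-grading $\deg X_1 = 1$, $\deg Y_1 = -1$ and a top-degree comparison to push a zero-divisor pair down to $\mathcal{A}_C^{(2)}$, whereas you embed $R[X,Y]/(XY-h)$ into $R[X^{\pm 1}]$ via $Y \mapsto hX^{-1}$. Your version also makes explicit the check $h_k \neq 0$ in $\mathcal{A}_C^{(k+1)}$, which the paper's ``continuing this argument'' uses only tacitly.
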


\begin{proof}
Assume for a contradiction that there exist $F_1, F_1^\prime \in \mathcal{A}_C \setminus \{0\}$ such that $F_1 F_1^\prime = 0$. 
Since ${\bf B}_C$ is a $\Bbbk$-basis of $\mathcal{A}_C$, the elements $F_1$ and $F_1^\prime$ can be written as 
\begin{equation}\label{eq:polynomial_one_variable}
F_1 =\sum_{k \in \mathbb{Z}} \overline{f_k Z_{1,k}},\quad F_1^\prime = \sum_{k \in \mathbb{Z}} \overline{f_k^\prime Z_{1,k}}
\end{equation}
for some $f_k, f_k^\prime \in \Bbbk [X_\ell, Y_\ell \mid 2 \leq \ell \leq n^2]$, where 
$f_k = f_k^\prime = 0$ for all but finitely many $k$, and 
\[Z_{1,k} \coloneqq \begin{cases}
X_1^k&\text{if}\ k \geq 0,\\
Y_1^{-k} &\text{if}\ k \leq 0.
\end{cases}\]
Let us regard $\Bbbk [X_{i,j}, Y_{i,j} \mid q_{i, j} \in \Pi_C \setminus \Pi^\ast_C]$ as a $\mathbb{Z}$-graded ring by 
\[\deg (X_1) = 1,\quad \deg (Y_1) = -1,\quad \deg (X_k) = \deg (Y_k) = 0\ (k > 1).\]
Since there are no $X_1, Y_1$ in $g_2, \ldots, g_{n^2}$, the ideal $I_C$ is a homogeneous ideal, and hence $\mathcal{A}_C$ is also a $\mathbb{Z}$-graded ring. 
Then we have 
\[F_1 F_1^\prime = \overline{f_{k_1} Z_{1, k_1} f_{k_2}^\prime Z_{1, k_2}} + (\text{lower terms})\]
with respect to this grading, where $k_1 \coloneqq \max \{k \in \mathbb{Z} \mid \overline{f_k Z_{1,k}} \neq 0\}$ and $k_2 \coloneqq \max \{k \in \mathbb{Z} \mid \overline{f_k^\prime Z_{1,k}} \neq 0\}$.
Since $F_1 F_1^\prime = 0$, it holds that $\overline{f_{k_1} Z_{1, k_1} f_{k_2}^\prime Z_{1, k_2}} = 0$.
Note that
\[\overline{Z_{1, k_1} Z_{1, k_2}} = \begin{cases}
\overline{Z_{1, k_1 + k_2}}&\text{if}\ k_1 k_2 \geq 0,\\
\overline{(1 + Y_{2,1})^{k_3} Z_{1, k_1 + k_2}} &\text{if}\ k_1 k_2 < 0,
\end{cases}\]
where $k_3 \coloneqq \min\{|k_1|, |k_2|\}$.
Since $\{\overline{Z_{1, k}} \mid k \in \mathbb{Z}\} \subseteq \mathcal{A}_C$ is linearly independent over $\mathcal{A}_C^{(2)}$, we see by $\overline{f_{k_1} Z_{1, k_1} f_{k_2} Z_{1, k_2}} = 0$ that there exist $F_2, F_2^\prime \in \mathcal{A}_C^{(2)} \setminus \{0\}$ such that $F_2 F_2^\prime = 0$.
Continuing this argument, we conclude a contradiction.  
This proves the proposition.
\end{proof}

\begin{prop}\label{p:unit_elements}
For each unit element $F \in \mathcal{A}_C^\times$, there exists $c \in \Bbbk^\times$ such that $c F$ is a Laurent monomial in variables $\overline{X_{i,j}}$ for $q_{i, j} \in \Pi_C \setminus (\Pi^\ast_C \cup \widehat{\Pi}_C^\circ)$. 
In particular, $\mathcal{A}_C^\times$ is isomorphic to $\Bbbk^\times \times \mathbb{Z}^n$.
\end{prop}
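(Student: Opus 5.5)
The plan is to proceed by induction along the chain of subalgebras $\mathcal{A}_C^{(n^2)} \subseteq \cdots \subseteq \mathcal{A}_C^{(1)} = \mathcal{A}_C$, peeling off one pair of variables $X_k, Y_k$ at a time, exactly as in the proof of Proposition \ref{p:integral_domain}. The claim to prove for each $k$ is this: every unit of $\mathcal{A}_C^{(k)}$ is a nonzero scalar times a Laurent monomial in those $\overline{X_\ell}$ with $\ell \geq k$ and $q_\ell \notin \widehat{\Pi}_C^\circ$.

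First I note which $q_{i,j}$ lie outside $\widehat{\Pi}_C^\circ$. These are exactly the $q_{i,j}$ with $i+2j = 2n+1$, that is, $q_{2n+1-2j,j}$ for $1 \leq j \leq n$. For these, $Y_{i+1,j} = 0$, so $g_{i,j} = X_{i,j}Y_{i,j}-1$ and $\overline{X_{i,j}}$ is a unit with inverse $\overline{Y_{i,j}}$. For all other $q_{i,j}$ we have $g_{i,j} = X_{i,j}Y_{i,j} - 1 - X_{i+1,j-1}Y_{i+1,j}$ with a genuinely nonconstant extra term.

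The same argument as in Proposition \ref{p:integral_domain} shows that each $\mathcal{A}_C^{(k)}$ is an integral domain. It also shows that $\mathcal{A}_C^{(k)}$ is $\mathbb{Z}$-graded by $\deg X_k = 1$, $\deg Y_k = -1$, and $\deg = 0$ on the remaining variables, with $\{\overline{Z_{k,m}}\}_{m\in\mathbb{Z}}$ linearly independent over $\mathcal{A}_C^{(k+1)}$.

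For the inductive step, let $F F' = 1$ in $\mathcal{A}_C^{(k)}$. Compare top-degree and bottom-degree components. Since the ring is a domain, the product of the top components of $F$ and $F'$ is nonzero, and it is the top component of $FF' = 1$, hence has degree $0$. The same holds for the bottom components. This forces $F$ and $F'$ to be homogeneous, say $F = \overline{f Z_{k,m}}$ and $F' = \overline{f' Z_{k,-m}}$ with $f, f' \in \mathcal{A}_C^{(k+1)}$.

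There are two cases.
\begin{itemize}
\item If $q_k \notin \widehat{\Pi}_C^\circ$, then $\overline{Z_{k,m}} = \overline{X_k}^{\,m}$ is a unit, so $f$ is a unit of $\mathcal{A}_C^{(k+1)}$. Induction finishes this case.
\item If $q_k = q_{i,j} \in \widehat{\Pi}_C^\circ$, then $\overline{Z_{k,m}Z_{k,-m}} = \overline{(1+X_{i+1,j-1}Y_{i+1,j})^{|m|}}$, so $f f' (1+X_{i+1,j-1}Y_{i+1,j})^{|m|} = 1$ in $\mathcal{A}_C^{(k+1)}$. If $m \neq 0$, then $\overline{1+X_{i+1,j-1}Y_{i+1,j}}$ is a unit of $\mathcal{A}_C^{(k+1)}$. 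By the induction hypothesis it would then equal a scalar times a Laurent monomial in the invertible $\overline{X}$'s, hence a scalar times a single element of $\overline{\bf B}_C$, because $X^{-1} = Y$ there. But $1$ and $X_{i+1,j-1}Y_{i+1,j}$ are two distinct standard monomials, since they involve different elements of $\Pi_C \setminus \Pi_C^\ast$. Hence $m=0$, $F = \overline{f}$ is a unit of $\mathcal{A}_C^{(k+1)}$, and induction applies.
\end{itemize}
The base case $k = n^2$ involves only $q_{2n-1,1} \notin \widehat{\Pi}_C^\circ$, and there $\mathcal{A}_C^{(n^2)} = \Bbbk[X,X^{-1}]$, which is clear.

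For the isomorphism $\mathcal{A}_C^\times \cong \Bbbk^\times \times \mathbb{Z}^n$, I map $c \prod \overline{X_{i,j}}^{\,m_{i,j}}$ to $(c, (m_{i,j}))$. This is well defined and injective because distinct exponent vectors give distinct standard monomials in ${\bf B}_C$, which are linearly independent. Surjectivity onto $\mathbb{Z}^n$ holds since there are exactly $n$ such variables, one for each $j$.

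The main obstacle I expect is the homogeneity step. It needs both that $\mathcal{A}_C^{(k+1)}$ is a domain and the precise multiplication rule for $\overline{Z_{k,a}Z_{k,b}}$, so that the product of the top components can be identified and shown nonzero. I would first check carefully that the extreme graded pieces multiply without cancellation.
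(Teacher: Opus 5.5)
Your proposal is correct and follows essentially the same route as the paper. Both arguments run a descending induction along the tower $\mathcal{A}_C^{(k)}$, use the $\mathbb{Z}$-grading in $X_k, Y_k$ together with the domain property to force a unit and its inverse to be homogeneous, and exclude $m \neq 0$ for $q_k \in \widehat{\Pi}_C^\circ$ because $\overline{1 + X_{i+1,j-1}Y_{i+1,j}}$ would then be a unit. Your explicit reason for that last step fills in what the paper leaves as ``continue this argument'': a unit must be a scalar times a single standard monomial, whereas this element is a sum of two distinct ones.
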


\begin{proof}
Let us use the argument in the proof of Proposition \ref{p:integral_domain}.
We set $F_1 \coloneqq F$, $F_1^\prime \coloneqq F^{-1}$, and write these elements as in \eqref{eq:polynomial_one_variable}.
Since $\mathcal{A}_C^{(2)}$ is an integral domain by Proposition \ref{p:integral_domain}, the equality $F_1 F_1^\prime = 1$ implies that $F_1 = \overline{f_k Z_{1, k}}$ and $F_1^\prime = \overline{f_{-k}^\prime Z_{1, -k}}$ for some $k \in \mathbb{Z}$. 
Then we see that 
\begin{align*}
1 = F_1 F_1^\prime = \overline{f_k Z_{1, k} f_{-k}^\prime Z_{1, -k}} = \overline{(1 + Y_{2,1})^{|k|} f_k f_{-k}^\prime}.
\end{align*}
Set $F_2 \coloneqq \overline{(1 + Y_{2,1})^{|k|} f_k}$, $F_2^\prime \coloneqq \overline{f_{-k}^\prime} \in \Bbbk [X_{i,j}, Y_{i,j} \mid q_{i, j} \in \Pi_C \setminus (\Pi^\ast_C \cup \{q_{1,1}\})]$, and continue this argument. 
Then we conclude that $k = 0$ if $q_{2,1} \in \Pi_C \setminus \Pi^\ast_C$ and that for some $c \in \Bbbk^\times$, $c \overline{f_k}$ is a Laurent monomial in variables $\overline{X_{i,j}}$ for $q_{i, j} \in \Pi_C \setminus (\Pi^\ast_C \cup \widehat{\Pi}_C^\circ \cup \{q_{1,1}\})$. 
This proves the proposition.
\end{proof}

Localizing $\mathcal{A}_C$ at the elements $\overline{X_{i,j}}$ for $q_{i, j} \in \widehat{\Pi}_C^\circ$, we obtain 
\begin{equation}\label{eq:detropicalization_localized}
\mathcal{A}_C[\overline{X_{i,j}}^{-1} \mid q_{i, j} \in \widehat{\Pi}_C^\circ] \simeq \Bbbk [X_{i,j}^{\pm 1} \mid q_{i, j} \in \Pi_C \setminus \Pi^\ast_C],
\end{equation}
which is a Laurent polynomial ring.
Hence the Krull dimension of $\mathcal{A}_C$ coincides with the cardinality of $\Pi_C \setminus \Pi^\ast_C$, i.e.\ $n^2$, which equals the rank of $\mathcal{M}_C$. 

\begin{prop}\label{p:nonsingular}
The affine scheme ${\rm Spec}(\mathcal{A}_C)$ is nonsingular and hence normal.
\end{prop}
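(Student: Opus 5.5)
We will apply the Jacobian criterion to the presentation $\mathcal{A}_C = \Bbbk[X_q, Y_q \mid q \in \Pi_C \setminus \Pi^\ast_C]/I_C$. The scheme ${\rm Spec}(\mathcal{A}_C)$ is integral of dimension $n^2$ (Proposition \ref{p:integral_domain} and the localization \eqref{eq:detropicalization_localized}). It is cut out in $\mathbb{A}^{2n^2}$ by exactly $n^2$ equations $g_1, \ldots, g_{n^2}$, so it is a complete intersection. Hence it suffices to show that the $n^2 \times 2n^2$ Jacobian matrix $(\partial g_k/\partial X_\ell, \partial g_k/\partial Y_\ell)$ has rank $n^2$ at every $\Bbbk$-point of ${\rm Spec}(\mathcal{A}_C)$. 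Normality then follows because a regular local ring is a normal domain (alternatively, one can invoke Serre's criterion).

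For the rank computation we use the ordering $(q_1, \ldots, q_{n^2})$ from \eqref{eq:arrangement_type_C_nonmarked}, under which $g_\ell$ involves no $X_k, Y_k$ with $k < \ell$. The Jacobian restricted to the columns $\partial/\partial X_k$ and $\partial/\partial Y_k$ is then block triangular. More precisely, row $g_k$ has entries $\partial g_k/\partial X_k = Y_k$ and $\partial g_k/\partial Y_k = X_k$ in the $k$-th pair of columns, and the rows $g_\ell$ with $\ell > k$ vanish in that pair. So we will show that at each point $x$ the pair $(X_k(x), Y_k(x)) \neq (0,0)$ for every $k$. Once this holds, we choose for each $k$ a column in the $k$-th pair with a nonzero entry in row $k$. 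The resulting $n^2 \times n^2$ minor is triangular with nonzero diagonal, so the rank is $n^2$.

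The only real obstacle is this nonvanishing claim. Suppose $X_{i,j}(x) = Y_{i,j}(x) = 0$. Then $g_{i,j}(x) = 0$ forces $1 + X_{i+1,j-1}(x) Y_{i+1,j}(x) = 0$, and this is possible only when $q_{i,j} \in \widehat{\Pi}_C^\circ$. So the vanishing does not lead to a contradiction immediately. We therefore have to refine the choice of minor and use the derivatives with respect to $X_{i+1,j-1}$ and $Y_{i+1,j}$, which appear in $g_{i,j}$ with coefficients $-Y_{i+1,j}$ and $-X_{i+1,j-1}$. These coefficients cannot both vanish, since their product is $-1$ at $x$.

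We plan to handle this by induction from the bottom of the ordering. We process $k = n^2, n^2-1, \ldots, 1$ and assign to each $g_k$ a distinct pivot variable. If $(X_k(x), Y_k(x)) \neq (0,0)$, the pivot is $X_k$ or $Y_k$. Otherwise the pivot is $X_{i+1,j-1}$ (if $j \geq 2$ and its coefficient is nonzero) or $Y_{i+1,j}$. Note that $q_{i+1,j}$ is uniquely determined by $q_{i,j}$, and $q_{i+1,j-1}$ likewise. Two things must then be checked. First, the chosen pivots must be distinct. Second, after reordering, the resulting square minor must still be triangular.

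We expect the cleanest route to be the following. In the degenerate case, $g_k$ is replaced by a pivot in a variable indexed by a later element $q_\ell$, $\ell > k$. Since $x_k$ and $y_k$ do not occur in rows after $k$, we then pair the row $g_k$ with the unused column $\partial/\partial X_k$ as a free slot, swapped with the column pair of $q_\ell$. Checking that this swapping never collides is the delicate combinatorial step. Alternatively, one can sidestep the whole issue via the first-order local structure: each $g_{i,j}$ with $X_{i,j} = Y_{i,j} = 0$ is locally a smooth hypersurface in the variable $Y_{i+1,j}$ or $X_{i+1,j-1}$.
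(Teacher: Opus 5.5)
Your framework is right, and it is how the paper starts too: the Jacobian criterion, the block-triangular shape coming from the ordering of $\Pi_C\setminus\Pi_C^\ast$, and the fact that a degenerate index ($X_{i,j}(x)=Y_{i,j}(x)=0$) lies in $\widehat{\Pi}_C^\circ$ with $X_{i+1,j-1}(x)Y_{i+1,j}(x)=-1$. Note that this makes \emph{both} coefficients nonzero, not just one of them. But the proposal stops exactly where the content of the proof lies. You never show that the Jacobian has rank $n^2$ at points with degenerate indices, and you concede that non-collision of the pivots is unchecked.

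As stated, your rule does collide. Degenerate indices can be adjacent. For $n=3$ there are points of ${\rm Spec}(\mathcal{A}_C)$ with $X_{1,1}=Y_{1,1}=X_{1,2}=Y_{1,2}=0$: take $X_{2,1}=Y_{2,1}=-1$, $Y_{2,2}=1$ and solve the remaining equations. There $g_{1,1}$ is forced onto $\partial/\partial Y_{2,1}$, and your preference sends $g_{1,2}$ to $\partial/\partial X_{2,1}$. The nondegenerate row $g_{2,1}$ is then left with no column in its own pair.

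Even with a consistent choice, the minor is not triangular in the given order. The later row $g_{i+1,j}$ can have the nonzero entry $X_{i+1,j}(x)$ in the column $\partial/\partial Y_{i+1,j}$ that was borrowed by $g_{i,j}$. Your fallback does not close the gap either. Knowing that each $g_{i,j}$ separately defines a smooth hypersurface near $x$ says nothing about the linear independence of all the $dg$'s at $x$, which is exactly what must be proved.

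A direct way to finish is to show the rows are independent. Suppose $\sum c_{i,j}\, dg_{i,j}(x)=0$, and induct on the first index $i$.
\begin{itemize}
\item The variables $X_{i,j},Y_{i,j}$ occur only in $g_{i,j}$ and in rows with first index $i-1$. So the inductive hypothesis gives $c_{i,j}=0$ for every nondegenerate $q_{i,j}$.
\item For the degenerate $q_{i,j}$, go by decreasing $j$. The variable $X_{i+1,j}$ occurs only in $g_{i+1,j}$ and in $g_{i,j+1}$ (the latter only if $q_{i,j+1}\in\widehat{\Pi}_C^\circ$). Its coordinate gives $c_{i+1,j}Y_{i+1,j}(x)=c_{i,j+1}Y_{i+1,j+1}(x)=0$, hence $c_{i+1,j}=0$.
\item The variable $Y_{i+1,j}$ occurs only in $g_{i+1,j}$ and $g_{i,j}$. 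Its coordinate gives $c_{i,j}X_{i+1,j-1}(x)=0$, with $X_{i+1,0}=1$, hence $c_{i,j}=0$.
\end{itemize}
This is the global form of the paper's local minors $D_k$ built on the rows $g_{i_k,j_k}, g_{i_k+1,j_k-1}, g_{i_k+1,j_k}$. It also covers adjacent degenerate indices, where those minors overlap.
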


\begin{proof}
Let $N \coloneqq |\Pi_C \setminus \Pi^\ast_C| = n^2$, and take an arbitrary closed point ${\bm a}$ of ${\rm Spec}(\mathcal{A}_C)$, that is, ${\bm a}$ is an element of $\Bbbk^{2N}$ satisfying $g_{i,j}({\bm a}) = 0$ for all $q_{i,j} \in \Pi_C \setminus \Pi^\ast_C$. 
We write ${\bm a} = (a_{i,j}, b_{i,j})_{i,j} \in \Bbbk^{2N}$. 
Since the Krull dimension of $\mathcal{A}_C$ coincides with $N$, it suffices to show that the rank of the $N \times 2N$-matrix $D = \left(\frac{\partial g_{i,j}}{\partial X_{k,l}} ({\bm a}), \frac{\partial g_{i,j}}{\partial Y_{k,l}} ({\bm a})\right)_{(i,j), (k,l)}$ is equal to $N$. 
We arrange the elements of $\Pi_C \setminus \Pi^\ast_C$ as \eqref{eq:arrangement_type_C_nonmarked}.
Note that 
\[\frac{\partial g_{i,j}}{\partial X_{k,l}} ({\bm a}) =
\begin{cases}
b_{i,j} &\text{if}\ (k,l) = (i,j),\\
-b_{i+1,j} &\text{if}\ q_{i,j} \in \widehat{\Pi}_C^\circ\ \text{and}\ (k,l) = (i+1,j-1),\\
0 &\text{otherwise}
\end{cases}\]
and that 
\[\frac{\partial g_{i,j}}{\partial Y_{k,l}} ({\bm a}) =
\begin{cases}
a_{i,j} &\text{if}\ (k,l) = (i,j),\\
-a_{i+1,j-1} &\text{if}\ q_{i,j} \in \widehat{\Pi}_C^\circ,\ j > 1,\ \text{and}\ (k,l) = (i+1,j),\\
0 &\text{otherwise}.
\end{cases}\]
Hence if $a_{i,j} \neq 0$ or $b_{i,j} \neq 0$ for all $(i,j)$, then the rank of $D$ equals $N$ since the $N \times N$-submatrix $\widehat{D}$ of $D$ whose $(i,j)$-th column corresponds to $\frac{\partial}{\partial X_{i,j}}$ (respectively, $\frac{\partial}{\partial Y_{i,j}}$) if $a_{i,j} \neq 0$ (respectively, if $a_{i,j} = 0$, $b_{i,j} \neq 0$) is an upper triangular matrix whose diagonal entries are nonzero. 
Assume that $a_{i,j} = 0$ and $b_{i,j} = 0$ for some $(i,j)$, and write the set of all such $(i,j)$ as
$\{(i_1, j_1), \ldots, (i_t, j_t)\}$. 
Take $1 \leq k \leq t$. 
Since $g_{i_k, j_k}({\bm a}) = 0$, we see by $a_{i_k,j_k} = 0$ that $q_{i_k,j_k} \in \widehat{\Pi}_C^\circ$ and that $a_{i_k+1,j_k-1} b_{i_k+1, j_k} = 1$, where $a_{i_k+1,0} \coloneqq 1$ if $j_k = 1$.
In particular, $a_{i_k+1,j_k-1}, b_{i_k+1, j_k} \neq 0$, which implies that $(i_k+1,j_k-1), (i_k+1, j_k) \notin \{(i_1, j_1), \ldots, (i_t, j_t)\}$. 
If $j_k > 1$, then let $D_k$ denote the submatrix of $D$ with rows corresponding to $g_{i_k,j_k}, g_{i_k+1,j_k-1}, g_{i_k+1, j_k}$ and with columns corresponding to $\frac{\partial}{\partial X_{i_k+1,j_k-1}}, \frac{\partial}{\partial Y_{i_k+1,j_k-1}}, \frac{\partial}{\partial X_{i_k+1,j_k}}, \frac{\partial}{\partial Y_{i_k+1,j_k}}$. 
Then we have 
\[D_k = \begin{pmatrix}
-b_{i_k+1,j_k} & 0 & 0 & -a_{i_k+1,j_k-1}\\
b_{i_k+1,j_k-1} & a_{i_k+1,j_k-1} & 0 & 0 \\ 
0 & 0 &b_{i_k+1,j_k} & a_{i_k+1,j_k}
\end{pmatrix},\]
which is of rank $3$ since $a_{i_k+1,j_k-1}, b_{i_k+1, j_k} \neq 0$.
If $j_k = 1$, then let $D_k$ denote the submatrix of $D$ with rows corresponding to $g_{i_k,j_k}, g_{i_k+1, j_k}$ and with columns corresponding to $\frac{\partial}{\partial X_{i_k+1,j_k}}, \frac{\partial}{\partial Y_{i_k+1,j_k}}$. 
Then we have 
\[D_k = \begin{pmatrix}
0 & -1\\
b_{i_k+1,j_k} & a_{i_k+1,j_k}
\end{pmatrix},\]
which is of rank $2$ since $b_{i_k+1, j_k} \neq 0$.
Thus, we conclude that $D$ is of rank $N$.
\end{proof}

\begin{prop}\label{p:UFD}
The $\Bbbk$-algebra $\mathcal{A}_C$ is a unique factorization domain (UFD). 
\end{prop}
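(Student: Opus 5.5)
The plan is to apply Nagata's criterion: if a Noetherian domain $R$ has a set $S$ generated by prime elements such that $S^{-1}R$ is a UFD, then $R$ is a UFD. By \eqref{eq:detropicalization_localized}, localizing $\mathcal{A}_C$ at $\overline{X_{i,j}}$ for $q_{i,j} \in \widehat{\Pi}_C^\circ$ gives a Laurent polynomial ring, which is a UFD. It therefore suffices to show that each $\overline{X_{i,j}}$ with $q_{i,j} \in \widehat{\Pi}_C^\circ$ is prime in $\mathcal{A}_C$ (it is nonzero, and it is not a unit by Proposition \ref{p:unit_elements}). We already know $\mathcal{A}_C$ is a Noetherian domain by Proposition \ref{p:integral_domain}.

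To prove primality, compute the quotient
\[\mathcal{A}_C/(\overline{X_{i,j}}) \simeq \Bbbk[X_q, Y_q \mid q \in \Pi_C \setminus \Pi_C^\ast]/(I_C + (X_{i,j}))\]
and show it is a domain. Setting $X_{i,j}=0$ turns $g_{i,j}$ into $-(1 + X_{i+1,j-1}Y_{i+1,j})$, with $X_{i+1,0}=1$. If $j=1$, this relation reads $Y_{i+1,1} = -1$. Substituting this eliminates $Y_{i+1,1}$, and then $g_{i+1,1}$ becomes $-X_{i+1,1} - 1 - (\ldots)$, which lets us eliminate $X_{i+1,1}$ as well, or at least gives an equation linear in $X_{i+1,1}$. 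The relation $g_{i-1,2}$, which involves $X_{i,1}$, loses that term. Also $Y_{i,j}$ becomes a free variable.

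The resulting ring should again be of the same shape as $\mathcal{A}_C$. It is a polynomial ring in $Y_{i,j}$ tensored with a ring defined by relations $X_qY_q - 1 - (\text{monomial})$ in a triangular order, together with one or two substitutions. Primality then follows by running the argument of Proposition \ref{p:integral_domain} again: grade by one variable at a time along the arrangement \eqref{eq:arrangement_type_C_nonmarked}, compare the top-degree terms, and use that the multipliers $(1+Y)$ appearing there are nonzero divisors in the smaller ring.

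For $j > 1$, the relation $X_{i+1,j-1}Y_{i+1,j} = -1$ makes $X_{i+1,j-1}$ and $Y_{i+1,j}$ units. Substituting into $g_{i+1,j-1}$ and $g_{i+1,j}$ then simplifies them: $Y_{i+1,j-1}$ is expressed via $X_{i+1,j-1}^{-1}(1+\cdots)$ and $X_{i+1,j}$ via $Y_{i+1,j}^{-1}(1+\cdots)$. The quotient becomes a localization, at one variable, of a ring of the same triangular type with one fewer chain link, times $\Bbbk[Y_{i,j}]$. A localization of a domain is a domain, so this case reduces to the same triangular-domain argument.

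The main obstacle is organizing this bookkeeping uniformly: after each substitution, one must verify that the remaining generators still form a Gröbner basis with coprime leading terms, so that the standard-monomial basis argument of Proposition \ref{p:integral_domain} applies, including at boundary cases where $q_{i+1,j}$ or $q_{i+1,j-1}$ is marked. I would first check this carefully on the example $n=2$, then handle the general case by induction on the position of $q_{i,j}$ in the arrangement. If the direct quotient computation becomes unwieldy, the fallback is to show primality via the valuation: $\overline{X_{i,j}}$ is the leading term of a coordinate in a chart where the valuation is additive, and irreducibility of its standard form gives primality. I expect the quotient-domain route to work.
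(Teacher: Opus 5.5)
Your route is the paper's: Nagata's criterion applied to the Laurent localization \eqref{eq:detropicalization_localized}, then reduction to showing that $\mathcal{A}_C/(\overline{X_{i,j}})$ is a domain for $q_{i,j}\in\widehat{\Pi}_C^\circ$. That is done by eliminating variables in the quotient. For $j>1$, $X_{i+1,j-1}$ becomes a unit with inverse $-Y_{i+1,j}$, which lets one solve $g_{i+1,j}$ for $X_{i+1,j}$ and $g_{i+1,j-1}$ for $Y_{i+1,j-1}$. One then reruns the triangular argument of Proposition~\ref{p:integral_domain}.

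One claim in your bookkeeping is false: $Y_{i,j}$ does not become a free variable, so the quotient is not of the form $\Bbbk[Y_{i,j}]\otimes(\cdots)$. Whenever $i\ge 2$, the element $q_{i-1,j}$ lies in $\widehat{\Pi}_C^\circ$. The relation $g_{i-1,j}=X_{i-1,j}Y_{i-1,j}-1-X_{i,j-1}Y_{i,j}$ (with $X_{i,0}=1$) is untouched by setting $X_{i,j}=0$ and by the other eliminations, and it still contains $Y_{i,j}$. Already for $n=2$, the quotient $\mathcal{A}_C/(\overline{X_{2,1}})$ keeps the relation $X_{1,1}Y_{1,1}-1-Y_{2,1}$.

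The paper's auxiliary ring $\mathcal{A}_C^{(k,l)}$ (with $(k,l)=(i,j)$) handles this in three ways:
\begin{itemize}
\item it adjoins $Y_{k,l}$ as an extra variable that occurs in the unchanged relation $\tilde g_{k-1,l}=g_{k-1,l}$;
\item it records that, after substituting for $X_{k+1,l}$ and $Y_{k+1,l-1}$, the relations $g_{k,l+1}$ and $g_{k,l-1}$ become $XY-1-h$ with $h$ no longer a monomial;
\item it encodes your ``localization at $X_{k+1,l-1}$'' by the relation $X_{k+1,l-1}Y_{k+1,l-1}-1$ in a fresh variable.
\end{itemize}
None of this breaks your method. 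The triangular argument only needs each $1+h$ to be nonzero in the smaller ring, which holds because its standard-monomial expansion has constant term $1$. The variable $Y_{k,l}$ can simply be placed at the bottom of the induction, so the proof goes through once the quotient is written down correctly.

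Your valuation fallback would not work as stated: irreducibility implies primality only in a UFD, which is exactly what you are trying to prove.
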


\begin{proof}
The localization \eqref{eq:detropicalization_localized} is a Laurent polynomial ring and hence a UFD. 
Thus, by \cite[Theorem 20.2]{Mat}, it suffices to prove that $\overline{X_{k,l}}$ is a prime element of $\mathcal{A}_C$ for $q_{k,l} \in \widehat{\Pi}_C^\circ$.
To do that, let us show that $\mathcal{A}_C/(\overline{X_{k, l}})$ is an integral domain.
For $f \in \Bbbk [X_{i,j}, Y_{i,j} \mid q_{i, j} \in \Pi_C \setminus \Pi^\ast_C]$, we denote by $\rho(f) \in \mathcal{A}_C/(\overline{X_{k,l}})$ the element corresponding to $\overline{f} \in \mathcal{A}_C$. 
Since $\rho(g_{k,l}) = 0$ and $\rho(X_{k, l}) = 0$, it follows that $- \rho(X_{k+1,l-1}) = \rho(Y_{k+1, l})^{-1}$. 
Since $- \rho(X_{k+1,l-1} g_{k+1,l}) = \rho(Y_{k+1, l})^{-1} \rho(g_{k+1,l}) = 0$, we deduce that 
\[\rho(X_{k+1,l}) = - \rho(X_{k+1,l-1}) - \rho(X_{k+1,l-1} X_{k+2,l-1} Y_{k+2, l}).\]
Similarly, if $q_{k+1, l-1} \in \Pi_C \setminus \Pi^\ast_C$, then it follows by $- \rho(Y_{k+1,l} g_{k+1,l-1}) = 0$ that 
\[\rho(Y_{k+1,l-1}) = -\rho(Y_{k+1,l}) - \rho (Y_{k+1,l} X_{k+2,l-2} Y_{k+2, l-1}).\] 
In particular, if $q_{k, l+1} \in \Pi_C \setminus \Pi^\ast_C$, then 
\[0 = \rho(g_{k,l+1}) = \rho(X_{k,l+1} Y_{k,l+1} - 1 + (X_{k+1,l-1} + X_{k+1,l-1} X_{k+2,l-1} Y_{k+2, l}) Y_{k+1, l+1}).\]
In addition, if $q_{k, l-1} \in \Pi_C \setminus \Pi^\ast_C$, then 
\[0 = \rho(g_{k,l-1}) = \rho(X_{k,l-1} Y_{k,l-1} - 1 + X_{k+1,l-2} (Y_{k+1,l} + Y_{k+1,l} X_{k+2,l-2} Y_{k+2, l-1})).\]
We set $\mathcal{I}_{k,l} \coloneqq \{q_{i, j} \in \Pi_C \setminus \Pi^\ast_C \mid (i,j) \neq (k,l), (k+1,l)\}$, and define a $\Bbbk$-algebra $\mathcal{A}_C^{(k,l)}$ by 
\[\mathcal{A}_C^{(k,l)} \coloneqq \Bbbk [X_{i,j}, Y_{i,j} \mid q_{i, j} \in \mathcal{I}_{k,l}][Y_{k,l}]/(\tilde{g}_{i,j}^{(k,l)} \mid q_{i, j} \in \mathcal{I}_{k,l}),\]
where 
\[\tilde{g}_{i,j}^{(k,l)} \coloneqq \begin{cases}
X_{k+1,l-1} Y_{k+1,l-1} - 1&\text{if}\ (i,j) = (k+1,l-1),\\
X_{k,l+1} Y_{k,l+1} - 1 + (X_{k+1,l-1} + X_{k+1,l-1} X_{k+2,l-1} Y_{k+2, l}) Y_{k+1, l+1}&\text{if}\ (i,j) = (k,l+1),\\
X_{k,l-1} Y_{k,l-1} - 1 - X_{k+1,l-2} (Y_{k+1,l-1} + Y_{k+1,l-1} X_{k+2,l-2} Y_{k+2, l-1})&\text{if}\ (i,j) = (k,l-1),\\
X_{k-1,l+1} Y_{k-1,l+1} - 1 &\text{if}\ (i,j) = (k-1,l+1),\\
g_{i,j} = X_{i,j} Y_{i,j} - 1 - X_{i+1,j-1} Y_{i+1, j} &\text{otherwise}
\end{cases}\]
for $q_{i, j} \in \mathcal{I}_{k,l}$. 
By the arguments above, we obtain a $\Bbbk$-algebra isomorphism
\[\varphi \colon \mathcal{A}_C/(\overline{X_{k,l}}) \xrightarrow{\sim} \mathcal{A}_C^{(k,l)}\]
by
\[\varphi (\rho(X_{i,j})) \coloneqq 
\begin{cases}
\rho^{(k,l)} (-X_{k+1,l-1} - X_{k+1,l-1} X_{k+2,l-1} Y_{k+2, l})&\text{if}\ (i,j) = (k+1,l),\\
\rho^{(k,l)}(0) &\text{if}\ (i,j) = (k,l),\\
\rho^{(k,l)} (X_{i,j}) &\text{otherwise}
\end{cases}\]
and
\[\varphi (\rho(Y_{i,j})) \coloneqq 
\begin{cases}
\rho^{(k,l)}(-Y_{k+1,l-1})&\text{if}\ (i,j) = (k+1,l),\\
\rho^{(k,l)} (Y_{k+1,l-1} + Y_{k+1,l-1} X_{k+2,l-2} Y_{k+2, l-1})&\text{if}\ (i,j) = (k+1,l-1),\\
\rho^{(k,l)} (Y_{i,j}) &\text{otherwise}
\end{cases}\]
for $q_{i, j} \in \Pi_C \setminus \Pi^\ast_C$, where $\rho^{(k,l)} \colon \Bbbk [X_{i,j}, Y_{i,j} \mid q_{i, j} \in \mathcal{I}_{k,l}][Y_{k,l}] \twoheadrightarrow \mathcal{A}_C^{(k,l)}$ denotes the canonical projection. 
In a way similar to the proof of Proposition \ref{p:integral_domain}, we conclude that $\mathcal{A}_C^{(k,l)}$ is an integral domain, which implies the assertion of the proposition.
\end{proof}

Recall the explicit description of ${\bf B}_C$ given in \eqref{eq:standard_monomial_type_C}.
We define a map $\nu_C \colon \mathcal{A}_C \rightarrow S_{\mathcal{M}_C}$ by 
\begin{itemize}
\item $\nu_C (0) \coloneqq \infty$,
\item for $b = \prod_{q_{i, j} \in \Pi_C \setminus \Pi^\ast_C}X_{i,j}^{a_{i,j}} Y_{i,j}^{b_{i,j}} \in {\bf B}_C$, set
\[\displaystyle \nu_C (\overline{b}) \coloneqq \underset{q_{i, j} \in \Pi_C \setminus \Pi^\ast_C}{\bigstar} \varepsilon_{i, \leq j}^{\star a_{i,j}} \star (\varepsilon_{i, \leq j}^\prime)^{\star b_{i,j}},\] 
\item for each finite collection $c_i \in \Bbbk^\times$ and $b_i \in {\bf B}_C$, define $\nu_C (\sum_i c_i \overline{b_i})$ to be $\bigoplus_i \nu_C (\overline{b_i})$.
\end{itemize}

\begin{thm}\label{t:detropicalization_type_C}
The pair $(\mathcal{A}_C, \nu_C)$ is a detropicalization of $\mathcal{M}_C$ in the sense of Definition \ref{d:detropicalization}.
In addition, $\overline{{\bf B}_C}$ is a convex adapted basis for $\nu_C$ in the sense of Definition \ref{d:adapted_basis}.
\end{thm}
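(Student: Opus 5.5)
The plan is to reduce the statement to two facts: the images of the standard monomials are honest elements of $\mathcal{M}_C$, and a family of $\mathbb{Z}$-valued min-valuations indexed by $\mathcal{N}_C$ behaves well on $\mathcal{A}_C$. Most of Definition \ref{d:detropicalization} is already in hand:
\begin{itemize}
\item $\mathcal{A}_C$ is a finitely generated $\Bbbk$-algebra, hence Noetherian;
\item it is an integral domain by Proposition \ref{p:integral_domain};
\item its Krull dimension is $n^2=\operatorname{rank}\mathcal{M}_C$ by \eqref{eq:detropicalization_localized}.
\end{itemize}
Moreover, $\nu_C(c f)=\nu_C(f)$, $\nu_C(f)=\infty\iff f=0$, and condition (i) of Definition \ref{d:adapted_basis} hold by the very definition of $\nu_C$, since $\overline{{\bf B}_C}$ is a $\Bbbk$-basis. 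What remains is the following:
\begin{enumerate}
\item[(a)] $\nu_C(\overline{b})\in\mathcal{M}_C$ for every $b\in{\bf B}_C$, and $b\mapsto\nu_C(\overline b)$ is a bijection ${\bf B}_C\to\mathcal{M}_C$; this gives adaptedness (ii) and the condition $\mathcal{M}_C\subseteq\nu_C(\mathcal{A}_C)$;
\item[(b)] the valuation axioms (i) and (ii).
\end{enumerate}

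For (a), fix $b=\prod X_{i,j}^{a_{i,j}}Y_{i,j}^{b_{i,j}}$ with $\min\{a_{i,j},b_{i,j}\}=0$. Choose ${\bm\epsilon}$ with $\epsilon_{i,j}=+1$ if $a_{i,j}>0$ and $\epsilon_{i,j}=-1$ if $b_{i,j}>0$. Then every factor $\varepsilon_{i,\le j}$ or $\varepsilon'_{i,\le j}$ occurring in $b$ lies in the single maximal cone $\sigma_{\bm\epsilon}$; the factors $\varepsilon^{\pm}_{2i-1,\le n+1-i}$ lie in the linearity space. On such a cone all the sums $+_{\mathcal{C}}$ agree, so each $\Upsilon$ is a singleton. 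Hence the iterated $\star$ equals the single element whose $\emptyset$-coordinate is $\sum (a_{i,j}-b_{i,j})\hat{\bm e}_{q_{i,j}}$. Since $\{\hat{\bm e}_q\}$ is a $\mathbb{Z}$-basis of $M^{(C)}_\emptyset$, this map is bijective.

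For (b), I would pass to functions on the dual, using that $\mathcal{M}_C$ is full and the strict dual pair of Theorem \ref{t:strict_dual_pair}. An element $\bigoplus_{m\in S}m$ of $S_{\mathcal{M}_C}$ is determined by the function $n\mapsto\min_{m\in S}{\mathsf w}(n)(m)$ on $\mathcal{N}_C$; this is the description of $S_{\mathcal{M}}$ through points in \cite{EHM}. Through this description, $\star$ and $\oplus$ become pointwise $+$ and $\min$. So it suffices to show, for every $n\in\mathcal{N}_C$, that
\[
v_n(f)\coloneqq\min\{{\mathsf w}(n)(m_b)\mid b\in\operatorname{supp}f\}
\]
is a $\mathbb{Z}$-valued valuation on $\mathcal{A}_C$ in the min convention. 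The inequality $v_n(f+g)\ge\min\{v_n(f),v_n(g)\}$ is immediate.

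For $v_n(fg)=v_n(f)+v_n(g)$, fix a maximal cone $\sigma'_{{\bm\epsilon}(\mathcal{C}_0)}\ni n$. By the proof of Theorem \ref{t:strict_dual_pair}, ${\mathsf w}(n)$ is then linear in the chart $M_{\mathcal{C}_0}$, so $v_n$ is a weight-type function given by a linear functional on $\pi_{\mathcal{C}_0}(m_b)$. I would then follow the inductive scheme of Propositions \ref{p:integral_domain} and \ref{p:unit_elements}, peeling off one variable pair $(X_k,Y_k)$ at a time in the order \eqref{eq:arrangement_type_C_nonmarked}. There are two tasks. First, show that each straightening relation $X_kY_k=1+X_{i+1,j-1}Y_{i+1,j}$ replaces a product by terms of $v_n$-value at least the expected one. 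Second, show that exactly one $v_n$-minimal term survives, namely the one singled out by the sign condition defining $\sigma'_{{\bm\epsilon}(\mathcal{C}_0)}$ (the term $1$ or $X_{i+1,j-1}Y_{i+1,j}$ according to whether $q_{i+1,j}\notin\mathcal{C}_0$ or $q_{i+1,j}\in\mathcal{C}_0$). This mirrors the identity $\varphi(\varepsilon_{i,\le j})+\varphi(\varepsilon'_{i,\le j})=\min\{0,\ldots\}$ of Proposition \ref{p:relation_sp_points_type_C_GT}. Taking the $v_n$-minimal parts of $f$ and $g$, their product then has a nonzero $v_n$-minimal part, because the associated graded ring is again a domain by the same induction. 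This yields (i).

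The main obstacle is this last step: controlling the cascade of straightenings, since a relation $X_kY_k=1+\cdots$ creates new non-standard products further down, and checking that the minimal terms never cancel. I would first attempt to set it up as an induction on $k$ with a $\mathbb{Z}$-grading by $\deg X_k=1$, $\deg Y_k=-1$, exactly as in Proposition \ref{p:integral_domain}.
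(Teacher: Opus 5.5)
Your part (a), the routine checks, and the reduction of (b) to the family of $\mathbb{Z}$-valued functions $v_n$ are correct. This pointwise reduction is a different route from the paper's. The paper stays inside $S_{\mathcal{M}_C}$: it proves \eqref{eq:Grobner_cond_in_type_C}, derives \eqref{eq:type_C_inductive_step} by descending induction on $k$, and then invokes the argument of \cite[Proof of Lemma 8.10]{EHM} to conclude that $\nu_C^{(k)}$ is a valuation. Evaluated at a point $\varphi={\mathsf w}(n)$, \eqref{eq:Grobner_cond_in_type_C} says exactly the following. For the monomial weight $w(X_{i,j})=y_{i,j}$, $w(Y_{i,j})=-y'_{i,j}$, one has $w(X_{i,j}Y_{i,j})=\min\{w(1),w(X_{i+1,j-1}Y_{i+1,j})\}$, which is Proposition \ref{p:relation_sp_points_type_C_GT}. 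Note also that $v_n(\overline b)=w(b)$ for all $b\in{\bf B}_C$, simply because points turn $\star$ into $+$. You do not need chart-linearity of ${\mathsf w}(n)$ on a cone for this.

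The gap is the multiplicativity step. You only outline it, and one of its two sub-claims is false as stated. Suppose $n$ lies on a wall of $\Sigma(\mathcal{N}_C)$, i.e.\ $y'_{i+1,j}=y_{i+1,j-1}$ for some $q_{i,j}\in\widehat{\Pi}^\circ_C$. Then $1$ and $X_{i+1,j-1}Y_{i+1,j}$ have the same weight, and the $w$-initial form of $g_{i,j}$ is $g_{i,j}$ itself. So it is not true that ``exactly one $v_n$-minimal term survives''. You cannot discard such integral $n$ without an extra continuity-and-scaling argument, since $\nu_C(fg)$ is determined by its values at all points.

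What actually closes the argument is the following.
\begin{enumerate}
\item[(1)] Every straightening step $X_{i,j}Y_{i,j}\mapsto 1+X_{i+1,j-1}Y_{i+1,j}$ is weight non-decreasing, by the identity above. Hence the normal form of any representative has minimal weight at least that of the representative. This tames your cascade, makes the filtration $F_{\ge a}=\operatorname{span}\{\overline b\mid w(b)\ge a\}$ multiplicative, and gives $v_n(fg)\ge v_n(f)+v_n(g)$.
\item[(2)] Each initial form $\mathrm{in}_w(g_{i,j})$ equals $X_{i,j}Y_{i,j}-h_{i,j}$ with $h_{i,j}\in\{1,\ X_{i+1,j-1}Y_{i+1,j},\ 1+X_{i+1,j-1}Y_{i+1,j}\}$. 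These still have pairwise coprime lex-leading terms $X_{i,j}Y_{i,j}$, because $X_{i,j}>X_{i+1,j-1}$ and $Y_{i,j}>Y_{i+1,j}$. So ${\bf B}_C$ is also a basis of $\mathrm{gr}\,\mathcal{A}_C$. Peeling off variables as in Proposition \ref{p:integral_domain}, $\mathrm{gr}\,\mathcal{A}_C^{(k)}\cong\mathrm{gr}\,\mathcal{A}_C^{(k+1)}[X_k,Y_k]/(X_kY_k-h_k)$, where $h_k\neq 0$ as a nonzero combination of distinct standard monomials. For $h\neq 0$ in a domain $D$ there is an embedding $D[X,Y]/(XY-h)\hookrightarrow D[X^{\pm1}]$. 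By induction, $\mathrm{gr}\,\mathcal{A}_C$ is a domain, which forces $v_n(fg)=v_n(f)+v_n(g)$.
\end{enumerate}
With these two points supplied, your route gives a self-contained proof that replaces the paper's appeal to \cite{EHM} at the last step. Without them, the key equality $\nu_C(fg)=\nu_C(f)\star\nu_C(g)$ is not yet established.
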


\begin{proof}
Let us prove that
\begin{equation}\label{eq:Grobner_cond_in_type_C}
\varepsilon_{i, \leq j} \star \varepsilon_{i, \leq j}^\prime = \nu_C \left(1 + X_{i+1,j-1} Y_{i+1, j}\right).
\end{equation}
Since 
\[\displaystyle \Upsilon (\varepsilon_{i, \leq j}, \varepsilon_{i, \leq j}^\prime) = 
\begin{cases}
\{0, \varepsilon_{i+1, \leq j}^\prime + \varepsilon_{i+1, \leq j-1}\} &\text{if}\ q_{i,j} \in \widehat{\Pi}_C^\circ,\\
\{0\} &\text{otherwise},
\end{cases}\]
it follows that
\[\varepsilon_{i, \leq j} \star \varepsilon_{i, \leq j}^\prime = 
\begin{cases}
0 \oplus \left(\varepsilon_{i+1, \leq j}^\prime \star \varepsilon_{i+1, \leq j-1}\right)&\text{if}\ q_{i,j} \in \widehat{\Pi}_C^\circ,\\
0 &\text{otherwise},
\end{cases}\]
where $\varepsilon_{i+1, \leq 0} \coloneqq 0$ when $j = 1$.
If $q_{i,j} \in \widehat{\Pi}_C^\circ$, then we have $\nu_C \left(1 + X_{i+1,j-1} Y_{i+1, j}\right) = 0 \oplus \left(\varepsilon_{i+1, \leq j}^\prime \star \varepsilon_{i+1, \leq j-1}\right)$ since $1, X_{i+1,j-1} Y_{i+1, j} \in {\bf B}_C$. 
If $q_{i,j} \notin \widehat{\Pi}_C^\circ$, then it follows that $\nu_C \left(1 + X_{i+1,j-1} Y_{i+1, j}\right) = \nu_C (1) = 0$. 
From these, we conclude \eqref{eq:Grobner_cond_in_type_C}. 

For $1 \leq k \leq n^2$, set 
\[{\bf B}_C^{(k)} \coloneqq \left\{\prod_{k \leq \ell \leq n^2} X_\ell^{a_\ell} Y_\ell^{b_\ell} \mid \min\{a_\ell, b_\ell\} = 0\ \text{for all}\ k \leq \ell \leq n^2\right\},\]
which is a $\Bbbk$-basis of $\mathcal{A}_C^{(k)}$. 
We define a map $\nu_C^{(k)} \colon \mathcal{A}_C^{(k)} \rightarrow S_{\mathcal{M}_C}$ by replacing ${\bf B}_C$ with ${\bf B}_C^{(k)}$ in the definition of $\nu_C$, which coincides with the restriction of $\nu_C$ to $\mathcal{A}_C^{(k)} \hookrightarrow \mathcal{A}_C$. 
Let us prove that $\nu_C^{(k)} \colon \mathcal{A}_C^{(k)} \rightarrow S_{\mathcal{M}_C}$ is a valuation by descending induction on $k$. 
If $k = n^2$, then we have 
\[\mathcal{A}_C^{(n^2)} = \Bbbk [X_{n^2}, Y_{n^2}]/(g_{n^2}) = \Bbbk [X_{n^2}, Y_{n^2}]/(X_{n^2} Y_{n^2} - 1),\]
and 
\[\nu_C^{(n^2)} \left(\overline{X_{n^2}^a Y_{n^2}^b}\right) = \varepsilon_{2n-1, \leq 1}^{\star a} \star (\varepsilon_{2n-1, \leq 1}^\prime)^{\star b}\] 
for $a, b \in \mathbb{Z}_{\geq 0}$ with $\min\{a, b\} = 0$. 
Since $\varepsilon_{2n-1, \leq 1} \star \varepsilon_{2n-1, \leq 1}^\prime = 0$, it is easy to see that $\nu_C^{(n^2)} \colon \mathcal{A}_C^{(n^2)} \rightarrow S_{\mathcal{M}_C}$ is a valuation. 
Let $1 \leq k < n^2$, and assume that $\nu_C^{(k+1)}$ is a valuation. 
We take $a, b \in \mathbb{Z}_{\geq 0}$ and $F \in \mathcal{A}_C^{(k+1)} \hookrightarrow \mathcal{A}_C^{(k)}$ with $F \neq 0$. 

Let us show that 
\begin{equation}\label{eq:type_C_inductive_step}
\begin{aligned}
\nu_C^{(k)} (\overline{X_k^a Y_k^b} \cdot F) = \varepsilon_{i, \leq j}^{\star a} \star (\varepsilon_{i, \leq j}^\prime)^{\star b} \star \nu_C^{(k+1)} (F),
\end{aligned}
\end{equation}
where we write $q_k = q_{i,j}$. 
Setting $c \coloneqq \min\{a, b\} \in \mathbb{Z}_{\geq 0}$, it holds that 
\[\overline{X_k^a Y_k^b} \cdot F = \overline{X_k^{a-c} Y_k^{b-c} (1 + X_{i+1,j-1} Y_{i+1, j})^c} \cdot F.\]
Since $\overline{(1 + X_{i+1,j-1} Y_{i+1, j})^c} \cdot F \in \mathcal{A}_C^{(k+1)}$, we can write $\overline{(1 + X_{i+1,j-1} Y_{i+1, j})^c} \cdot F = \sum_i c_i \overline{b_i}$ for some finite collection $c_i \in \Bbbk^\times$ and $b_i \in {\bf B}_C^{(k+1)}$. 
Then it follows that 
\[\overline{X_k^a Y_k^b} \cdot F = \sum_i c_i \overline{X_k^{a-c} Y_k^{b-c} b_i}.\]
Since $X_k^{a-c} Y_k^{b-c} b_i \in {\bf B}_C^{(k)}$, we deduce by the definition of $\nu_C^{(k)}$ that 
\begin{align*}
\nu_C^{(k)} \left(\overline{X_k^a Y_k^b} \cdot F\right) &= \bigoplus_i \nu_C^{(k)} \left(\overline{X_k^{a-c} Y_k^{b-c} b_i}\right) = \bigoplus_i \left(\varepsilon_{i, \leq j}^{\star a-c} \star (\varepsilon_{i, \leq j}^\prime)^{\star b-c} \star \nu_C^{(k+1)} \left(\overline{b_i}\right)\right)\\
&= \varepsilon_{i, \leq j}^{\star a-c} \star (\varepsilon_{i, \leq j}^\prime)^{\star b-c} \star \left(\bigoplus_i \nu_C^{(k+1)} \left(\overline{b_i}\right)\right)\\
&= \varepsilon_{i, \leq j}^{\star a-c} \star (\varepsilon_{i, \leq j}^\prime)^{\star b-c} \star \nu_C^{(k+1)} \left(\overline{(1 + X_{i+1,j-1} Y_{i+1, j})^c} \cdot F\right).
\end{align*}
Since $\nu_C^{(k+1)}$ is a valuation, it follows that 
\begin{align*}
\nu_C^{(k+1)} (\overline{(1 + X_{i+1,j-1} Y_{i+1, j})^c} \cdot F) &= \nu_C^{(k+1)} \left(\overline{1 + X_{i+1,j-1} Y_{i+1, j}}\right)^{\star c} \star \nu_C^{(k+1)} (F)\\
&= \varepsilon_{i, \leq j}^{\star c} \star (\varepsilon_{i, \leq j}^\prime)^{\star c} \star \nu_C^{(k+1)} (F)\quad (\text{by}\ \eqref{eq:Grobner_cond_in_type_C}),
\end{align*}
which implies \eqref{eq:type_C_inductive_step}. 

Combining \eqref{eq:type_C_inductive_step} with the argument in \cite[Proof of Lemma 8.10]{EHM}, we deduce that the map $\nu_C^{(k)} \colon \mathcal{A}_C^{(k)} \rightarrow S_{\mathcal{M}_C}$ is a valuation. 
Setting $k = 1$, it follows that $\nu_C \colon \mathcal{A}_C \rightarrow S_{\mathcal{M}_C}$ is a valuation. 

For $b = \prod_{q_{i, j} \in \Pi_C \setminus \Pi^\ast_C}X_{i,j}^{a_{i,j}} Y_{i,j}^{b_{i,j}} \in {\bf B}_C$, define $m_b \in \mathcal{M}_C$ to be the element corresponding to 
\[\sum_{q_{i, j} \in \Pi_C \setminus \Pi^\ast_C} (a_{i,j}- b_{i,j}) {\bm e}_{i, \leq j} \in M_\emptyset^{(C)}.\]
Then a map ${\bf B}_C \rightarrow \mathcal{M}_C$ defined by $b \mapsto m_b$ is bijective.
Since $\min\{a_{i,j}, b_{i,j}\} = 0$ for all $q_{i, j} \in \Pi_C \setminus \Pi^\ast_C$, we deduce that 
\[\displaystyle \underset{q_{i, j} \in \Pi_C \setminus \Pi^\ast_C}{\bigstar} \varepsilon_{i, \leq j}^{\star a_{i,j}} \star (\varepsilon_{i, \leq j}^\prime)^{\star b_{i,j}} = m_b\]
and hence that $\nu_C (\overline{b}) = m_b$. 
This implies that $\nu_C$ induces a bijective map $\overline{{\bf B}_C} \rightarrow \mathcal{M}_C$ given by $\overline{b} \mapsto \nu_C (\overline{b}) = m_b$. 
Hence the pair $(\mathcal{A}_C, \nu_C)$ is a detropicalization of $\mathcal{M}_C$, and $\overline{{\bf B}_C}$ is an adapted basis for $\nu_C$. 
This concludes the theorem. 
\end{proof}

In the rest of this section, we assume the following conditions:
\begin{enumerate}
\item[($\dagger 1$)] $\lambda_k - \lambda_{k-1} \geq 2$ for all $1 \leq k \leq n$, where $\lambda_0 \coloneqq 0$;
\item[($\dagger 2$)] the fixed element ${\bm u} = (u_p)_{p \in \Pi_C \setminus \Pi_C^\ast} \in {\mathcal O}(\Pi_C, \Pi_C^\ast, \lambda)  \cap \z^{\Pi_C \setminus \Pi_C^\ast}$ satisfies that $u_{p_1} < u_{p_2}$ for all $p_1, p_2 \in \Pi_C$ with $p_1 \lessdot p_2$, where $u_p \coloneqq \lambda_p$ for $p \in \Pi_C^\ast$.
\end{enumerate}
Note that the condition ($\dagger 1$) is equivalent to the existence of ${\bm u}$ satisfying \eqref{eq:assumption} and ($\dagger 2$).
For instance, the marking $\lambda^r$ given by $(\lambda^r)_a = r(a)$ for $a \in \Pi_C^\ast$ and the element ${\bm u}^r = (u^r_p)_{p \in \Pi_C \setminus \Pi_C^\ast} \in {\mathcal O}(\Pi_C, \Pi_C^\ast, \lambda^r)  \cap \z^{\Pi_C \setminus \Pi_C^\ast}$ given by $u^r_p \coloneqq r(p)$ satisfy these conditions. 
The condition ($\dagger 2$) implies that ${\bm u}$ is an interior lattice point of ${\mathcal O}(\Pi_C, \Pi_C^\ast, \lambda)$, which implies that $\widehat{\Delta}(\Pi_C, \Pi^\ast_C, \lambda)$ contains the origin as an interior point. 
For $f = c_1 \overline{b_1} + \cdots + c_\ell \overline{b_\ell} \in \mathcal{A}_C$ with $c_1, \ldots, c_\ell \in \Bbbk^\times$ and distinct $b_1, \ldots, b_\ell \in {\bf B}_C$, define ${\rm supp}(f) \subseteq (\mathcal{M}_C)_{\mathbb{R}}$ to be $\text{p-conv}_\mathbb{R}(\{\nu_C(\overline{b_1}), \ldots, \nu_C(\overline{b_\ell})\})$ (see \cite[Definition 7.1]{EHM}). 
For $\widehat{\Delta}_C \coloneqq \widehat{\Delta}(\Pi_C, \Pi^\ast_C, \lambda)$ and $k \in \mathbb{Z}_{\geq 0}$, we set 
\[\Gamma (\mathcal{A}_C, k \widehat{\Delta}_C) \coloneqq \{f \in \mathcal{A}_C \setminus \{0\} \mid {\rm supp}(f) \subseteq k \widehat{\Delta}_C\} \cup \{0\}.\]
By \cite[Lemma 5.3]{EHM}, for $f = c_1 \overline{b_1} + \cdots + c_\ell \overline{b_\ell} \in \mathcal{A}_C$ with $c_1, \ldots, c_\ell \in \Bbbk^\times$ and distinct $b_1, \ldots, b_\ell \in {\bf B}_C$, we have ${\rm supp}(f) \subseteq k \widehat{\Delta}_C$ if and only if $\nu_C(\overline{b_1}), \ldots, \nu_C(\overline{b_\ell}) \in k \widehat{\Delta}_C$. 
In particular, $\Gamma (\mathcal{A}_C, k \widehat{\Delta}_C)$ is a $\Bbbk$-linear subspace of $\mathcal{A}_C$. 
We define a $\mathbb{Z}_{\geq 0}$-graded $\Bbbk$-algebra $\mathcal{A}_C^{\widehat{\Delta}_C}$ by
\[\mathcal{A}_C^{\widehat{\Delta}_C} \coloneqq \bigoplus_{k \in \mathbb{Z}_{\geq 0}} \Gamma (\mathcal{A}_C, k \widehat{\Delta}_C)  \cdot t^k \subseteq \mathcal{A}_C[t],\]
where $t$ is a formal variable (see \cite[Definition 7.6]{EHM}).
Let 
\[X_{\mathcal{A}_C} (\widehat{\Delta}_C) \coloneqq {\rm Proj}(\mathcal{A}_C^{\widehat{\Delta}_C})\]
be the projective scheme corresponding to the $\mathbb{Z}_{\geq 0}$-graded $\Bbbk$-algebra $\mathcal{A}_C^{\widehat{\Delta}_C}$. 
Since $\mathcal{A}_C$ is an integral domain by Proposition \ref{p:integral_domain}, it follows that $\mathcal{A}_C^{\widehat{\Delta}_C}$ is also an integral domain and that $X_{\mathcal{A}_C} (\widehat{\Delta}_C)$ is irreducible. 
By Proposition \ref{p:PL_description} and Theorems \ref{t:strict_dual_pair}, \ref{t:detropicalization_type_C}, we see that the assumption of \cite[Theorem 7.11]{EHM} is satisfied, which implies the following.

\begin{thm}\label{t:compactification}
The inclusion $\mathcal{A}_C^{\widehat{\Delta}_C} \hookrightarrow \mathcal{A}_C[t]$ of $\mathbb{Z}_{\geq 0}$-graded $\Bbbk$-algebras induces an open embedding ${\rm Spec}(\mathcal{A}_C) \simeq {\rm Proj}(\mathcal{A}_C[t]) \hookrightarrow X_{\mathcal{A}_C} (\widehat{\Delta}_C)$. 
In addition, the complement of ${\rm Spec}(\mathcal{A}_C)$ in $X_{\mathcal{A}_C} (\widehat{\Delta}_C)$ is a union of divisors $D_p$ for $p \in \Pi_C \setminus \Pi_C^\ast$ and $D_{p,p^\prime}$ for $p \in \Pi_C^\ast, p^\prime \in \Pi_C \setminus \Pi_C^\ast$ with $p^\prime \lessdot p$ which are in bijection with $\varphi_p$ and $\varphi_{p,p^\prime}$ in Propositions \ref{p:PL_description}, respectively. 
\end{thm}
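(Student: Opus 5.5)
The plan is to deduce the statement directly from \cite[Theorem 7.11]{EHM}, after checking its hypotheses one at a time. These hypotheses are: a finite polyptych lattice with a strict dual pair; a detropicalization with a convex adapted basis; and a PL polytope, described as an intersection of half-spaces $\mathcal{H}_{\varphi,a}$ with $\varphi$ points, that contains the origin in its interior.

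\textbf{Step 1: the hypotheses.}
\begin{itemize}
\item The strict dual pair $(\mathcal{M}_C,\mathcal{N}_C,{\mathsf v},{\mathsf w})$ is Theorem \ref{t:strict_dual_pair}. Fullness then follows from \cite[Lemma 4.2]{EHM}.
\item The detropicalization $(\mathcal{A}_C,\nu_C)$ with convex adapted basis $\overline{{\bf B}_C}$ is Theorem \ref{t:detropicalization_type_C}. Noetherianity and the Krull dimension $n^2$ come from the localization \eqref{eq:detropicalization_localized}, and $\mathcal{A}_C$ is an integral domain by Proposition \ref{p:integral_domain}.
\item The PL polytope description of $\widehat{\Delta}_C$ by the points $\varphi_p$ and $\varphi_{p,p^\prime}$ is Proposition \ref{p:PL_description}.
\item Condition ($\dagger 2$) puts ${\bm u}$ in the interior of the order polytope, so $0$ is an interior point of $\widehat{\Delta}_C$. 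Consequently every $a_p$ and every $a_{p,p^\prime}$ is negative.
\end{itemize}

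\textbf{Step 2: the open embedding.} Since $\Gamma(\mathcal{A}_C,k\widehat{\Delta}_C)$ exhausts $\mathcal{A}_C$ as $k\to\infty$ (because $0$ is interior), the algebra $\mathcal{A}_C^{\widehat{\Delta}_C}$ contains $t$ in degree one. Localizing at $t$ gives $\mathcal{A}_C^{\widehat{\Delta}_C}[t^{-1}]_0\simeq\mathcal{A}_C$. Hence $D_+(t)\simeq{\rm Spec}(\mathcal{A}_C)$ is an open subscheme, and this identification is compatible with ${\rm Proj}(\mathcal{A}_C[t])$.

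\textbf{Step 3: the boundary divisors.} The complement of $D_+(t)$ is $V_+(t)$. Following \cite[Theorem 7.11]{EHM}, its irreducible components correspond to the facets of $\widehat{\Delta}_C$. Concretely, each facet-defining point $\varphi$ gives a valuation on $\mathcal{A}_C^{\widehat{\Delta}_C}$, namely
\[
f t^k\mapsto \min\{\varphi(\nu_C(\overline{b}))-k a\},
\]
where the minimum runs over the basis elements $\overline{b}$ in the support of $f$. This valuation is well defined because the basis is adapted, and it is multiplicative because $\varphi$ is a point and $\nu_C$ is a valuation. The corresponding divisor is the vanishing locus of its center.

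\textbf{Main obstacle: irredundancy of the inequalities.} One must show that each inequality in Proposition \ref{p:PL_description} really defines a facet, so that the divisors $D_p$ and $D_{p,p^\prime}$ are distinct, nonempty and in bijection with the $\varphi$'s. For this I would pass to the chart $M_\emptyset$, where the inequalities are exactly the covering relations of the Gelfand--Tsetlin order polytope. For a nondegenerate marking (here guaranteed by ($\dagger 1$)) each covering relation gives a facet, and one can exhibit a lattice point of $k\widehat{\Delta}_C$ on which only that inequality is tight. That facet-verification, combined with the general result of \cite{EHM}, completes the argument.
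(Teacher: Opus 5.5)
Your proposal follows the paper's argument: the paper proves the theorem only by checking the hypotheses of \cite[Theorem 7.11]{EHM} through Proposition \ref{p:PL_description}, Theorems \ref{t:strict_dual_pair} and \ref{t:detropicalization_type_C}, and the fact that ($\dagger 2$) makes the origin an interior point of $\widehat{\Delta}_C$. Your Steps 2 and 3 just unpack what that theorem provides, and the paper leaves your facet (irredundancy) check implicit in the citation; the check works because each covering relation appears in exactly one of the PL inequalities given by $\varphi_p$ or $\varphi_{p,p^\prime}$.
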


By \cite[Theorem 18]{FFP}, it follows that $\widehat{\Delta}(\Pi_C, \Pi^\ast_C, \lambda)$ is normal in the sense of \cite[Definition 7.13]{EHM}, which implies by \cite[Lemma 7.14]{EHM} that the $\mathbb{Z}_{\geq 0}$-graded algebra $\mathcal{A}_C^{\widehat{\Delta}_C}$ is generated in degree $1$. 
By Proposition \ref{p:nonsingular} and \cite[Proposition 7.16 and Theorem 7.18]{EHM}, we obtain the following.

\begin{thm}
The scheme $X_{\mathcal{A}_C} (\widehat{\Delta}_C)$ is normal, and the valuation ${\rm ord}_{D_p} \colon \mathcal{A}_C \setminus \{0\} \rightarrow \mathbb{Z}$ corresponding to the divisor $D_p$ for $p \in \Pi_C \setminus \Pi_C^\ast$ is given by $\varphi_p \circ \nu_C$. 
Similarly, it holds that ${\rm ord}_{D_{p,p^\prime}} = \varphi_{p,p^\prime} \circ \nu_C$ on $\mathcal{A}_C \setminus \{0\}$ for $p \in \Pi_C^\ast, p^\prime \in \Pi_C \setminus \Pi_C^\ast$ with $p^\prime \lessdot p$.
In addition, $\mathcal{A}_C^{\widehat{\Delta}_C}$ is Cohen--Macaulay, and $X_{\mathcal{A}_C} (\widehat{\Delta}_C)$ is arithmetic Cohen--Macaulay in the sense of \cite[Definition 7.17]{EHM}.
\end{thm}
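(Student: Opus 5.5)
The theorem has three assertions: normality of $X_{\mathcal{A}_C}(\widehat{\Delta}_C)$, the formulas ${\rm ord}_{D_p}=\varphi_p\circ\nu_C$ and ${\rm ord}_{D_{p,p'}}=\varphi_{p,p'}\circ\nu_C$, and the Cohen--Macaulay properties. The plan is to reduce each of them to the general results of \cite{EHM}, after checking that their hypotheses hold in our setting.

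\textbf{Hypotheses of \cite{EHM}.} These are the following.
\begin{itemize}
\item $\mathcal{M}_C$ and $\mathcal{N}_C$ form a strict dual pair (Theorem \ref{t:strict_dual_pair}); hence both are full by \cite[Lemma 4.2]{EHM}.
\item $(\mathcal{A}_C,\nu_C)$ is a detropicalization with convex adapted basis $\overline{{\bf B}_C}$ (Theorem \ref{t:detropicalization_type_C}).
\item $\widehat{\Delta}_C$ is a PL polytope cut out by the points $\varphi_p$ and $\varphi_{p,p'}$ (Proposition \ref{p:PL_description}).
\item By ($\dagger 2$), $\widehat{\Delta}_C$ contains the origin in its interior.
\item ${\rm Spec}(\mathcal{A}_C)$ is nonsingular, hence normal (Proposition \ref{p:nonsingular}).
\end{itemize}

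\textbf{Normality of the PL polytope.} Next we check that $\widehat{\Delta}_C$ is normal in the sense of \cite[Definition 7.13]{EHM}. Its chart $\pi_\emptyset(\widehat{\Delta}_C)$ is a translate of the marked order polytope $\mathcal{O}(\Pi_C,\Pi_C^\ast,\lambda)$, and each chart $\widehat{\Delta}_{\mathcal{C}}$ is a translate of a marked chain-order polytope. These are lattice polytopes with the integer decomposition property by \cite[Theorem 18]{FFP}. Since the $\mu_{\mathcal{C}}$ are bijective on lattice points and compatible with dilation, $\widehat{\Delta}_C$ is normal. Then \cite[Lemma 7.14]{EHM} shows that $\mathcal{A}_C^{\widehat{\Delta}_C}$ is generated in degree $1$.

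\textbf{Normality of $X$ and the divisorial valuations.} We apply \cite[Proposition 7.16]{EHM}. Its content is that, with $\mathcal{A}_C$ normal and the polytope normal, the Rees-type algebra $\mathcal{A}_C^{\widehat{\Delta}_C}$ is an intersection of $\mathcal{A}_C[t]$ with discrete valuation rings. These valuations are defined on the homogeneous pieces by $f t^k\mapsto \varphi(\nu_C(f))+k\,a$, where $(\varphi,a)$ runs over the facet data $(\varphi_p,a_p)$ and $(\varphi_{p,p'},a_{p,p'})$. The key input is that each $\varphi\circ\nu_C$ is a genuine valuation on $\mathcal{A}_C$. This follows because $\varphi$ is a point, so $\varphi(x\star y)=\varphi(x)+\varphi(y)$, and $\varphi$ takes the minimum over $\oplus$. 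An intersection of a normal domain with valuation rings is normal, so $\mathcal{A}_C^{\widehat{\Delta}_C}$, and hence $X_{\mathcal{A}_C}(\widehat{\Delta}_C)$, is normal.

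The boundary divisors $D_p$ and $D_{p,p'}$ from Theorem \ref{t:compactification} correspond to the height-one primes cut out by these valuations. The identification ${\rm ord}_{D}=\varphi\circ\nu_C$ is then that of \cite[Theorem 7.18]{EHM}. I would verify two things for it: that distinct facets give distinct primes, and that the valuations $\varphi\circ\nu_C$ are primitive, meaning they take the value $1$ somewhere. The second follows because each $\varphi$ takes the value $1$ on suitable elements $\varepsilon$, and those elements are realized as $\nu_C(\overline{b})$ with $b\in{\bf B}_C$.

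\textbf{Cohen--Macaulay.} The last assertion comes from the toric degeneration. The adapted basis gives a filtration of $\mathcal{A}_C^{\widehat{\Delta}_C}$ whose associated graded ring is the semigroup ring of the cone over $\widehat{\Delta}_{\mathcal{C}}$ in some chart. That ring is normal, being a normal affine semigroup ring, so it is Cohen--Macaulay by Hochster's theorem. Cohen--Macaulayness then lifts along the flat degeneration, as in \cite[Theorem 7.18]{EHM}; arithmetic Cohen--Macaulayness of $X$ is the same statement for its homogeneous coordinate ring.

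\textbf{Main obstacle.} The hardest step is confirming that the hypotheses of \cite[Theorem 7.18]{EHM} are exactly met, in particular that the associated graded ring is the normal semigroup ring of a single chart. For this I would use convexity of the adapted basis and the fact that each $\mu_{\mathcal{C}}$ maps $k\widehat{\Delta}_\emptyset\cap\mathcal{M}$ bijectively onto the lattice points of $k\widehat{\Delta}_{\mathcal{C}}$.
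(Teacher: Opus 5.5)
Your proposal follows the paper's route: normality of $\widehat{\Delta}_C$ from \cite[Theorem 18]{FFP}, generation in degree $1$ from \cite[Lemma 7.14]{EHM}, and then Proposition \ref{p:nonsingular} together with \cite[Proposition 7.16 and Theorem 7.18]{EHM} give all three assertions; the paper only cites these results, whereas you sketch how they work inside. One small slip in that sketch: $k\widehat{\Delta}_C$ is cut out by $\varphi \geq k a$, so the defining valuations should be $f t^k \mapsto \varphi(\nu_C(f)) - k a$, not $\varphi(\nu_C(f)) + k a$ (with your sign, $t$ itself, which lies in $\mathcal{A}_C^{\widehat{\Delta}_C}$, would get the negative value $a_p$).
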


Let $\tilde{\rho}$ be an (ordered) $\mathbb{Z}$-basis of $\mathbb{Z}^{\Pi_C \setminus \Pi_C^\ast}$ that is contained in $\sigma_{{\bm \epsilon}(\mathcal{C})}^\prime = {\mathsf w}^{-1} ({\rm Sp}_{\mathbb{R}}(\mathcal{M}_C, \mathcal{C}))$. 
Then we obtain a valuation $\mathfrak{v}_{\mathcal{C}, \tilde{\rho}} \colon \mathcal{A}_C^{\widehat{\Delta}_C} \setminus \{0\} \rightarrow \mathbb{Z}^{\Pi_C \setminus \Pi_C^\ast} \times \mathbb{Z}$ as in \cite[Section 7.1]{EHM}. 
The Newton--Okounkov body $\Delta(\mathcal{A}_C^{\widehat{\Delta}_C}, \mathfrak{v}_{\mathcal{C}, \tilde{\rho}})$ of $\mathcal{A}_C^{\widehat{\Delta}_C}$ is defined by 
\[\Delta(\mathcal{A}_C^{\widehat{\Delta}_C}, \mathfrak{v}_{\mathcal{C}, \tilde{\rho}}) \coloneqq \overline{\bigcup_{k \in \mathbb{Z}_{>0}} \left\{\frac{1}{k} {\bm a} \mid ({\bm a}, k) \in \mathfrak{v}_{\mathcal{C}, \tilde{\rho}} \left(\mathcal{A}_C^{\widehat{\Delta}_C} \setminus \{0\}\right)\right\}}.\]
By \cite[Theorems 7.21 and 7.22]{EHM}, we realize the marked chain-order polytope $\widehat{\Delta}_{\mathcal{C}}(\Pi_C, \Pi_C^\ast, \lambda)$ as a Newton--Okounkov body of $\mathcal{A}_C^{\widehat{\Delta}_C}$ and a toric degeneration of $X_{\mathcal{A}_C} (\widehat{\Delta}_C)$ as follows. 

\begin{thm}\label{t:NO_degeneration_type_C}
The Newton--Okounkov body $\Delta(\mathcal{A}_C^{\widehat{\Delta}_C}, \mathfrak{v}_{\mathcal{C}, \tilde{\rho}})$ coincides with the marked chain-order polytope $\widehat{\Delta}_{\mathcal{C}}(\Pi_C, \Pi_C^\ast, \lambda)$. 
In addition, there exists a flat morphism $\mathcal{X}_{\mathcal{C}, \tilde{\rho}} \rightarrow {\rm Spec}(\Bbbk [t])$ whose (scheme-theoretic) fibers over $t \in \Bbbk^\times$ are all isomorphic to $X_{\mathcal{A}_C} (\widehat{\Delta}_C)$ and whose central fiber over $0 \in \Bbbk$ is isomorphic to the normal projective toric variety associated to the integral polytope $\widehat{\Delta}_{\mathcal{C}}(\Pi_C, \Pi_C^\ast, \lambda)$.   
\end{thm}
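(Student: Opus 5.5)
The plan is to obtain Theorem \ref{t:NO_degeneration_type_C} as a direct application of \cite[Theorems 7.21 and 7.22]{EHM}, so the work lies in checking their hypotheses for $\mathcal{M}_C$, $\mathcal{N}_C$, $(\mathcal{A}_C,\nu_C)$, $\overline{{\bf B}_C}$ and $\widehat{\Delta}_C$. The hypotheses are supplied by earlier results as follows:
\begin{enumerate}
\item[(a)] Theorem \ref{t:strict_dual_pair} makes $(\mathcal{M}_C,\mathcal{N}_C)$ a strict dual pair. By \cite[Lemma 4.2]{EHM} both lattices are full.
\item[(b)] Theorem \ref{t:detropicalization_type_C} makes $(\mathcal{A}_C,\nu_C)$ a detropicalization with convex adapted basis $\overline{{\bf B}_C}$.
\item[(c)] Proposition \ref{p:PL_description} shows that $\widehat{\Delta}_C$ is a PL polytope cut out by the points $\varphi_p$ and $\varphi_{p,p'}$.
\item[(d)] Under ($\dagger 2$) the origin is an interior point of $\widehat{\Delta}_C$.
\item[(e)] By \cite[Theorem 18]{FFP} and \cite[Lemma 7.14]{EHM}, $\widehat{\Delta}_C$ is normal and $\mathcal{A}_C^{\widehat{\Delta}_C}$ is generated in degree $1$.
\end{enumerate}
I will also use the preceding theorems: the compactification result (Theorem \ref{t:compactification}) and the normality and Cohen--Macaulay statements. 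They give that $X_{\mathcal{A}_C}(\widehat{\Delta}_C)$ is a normal projective variety containing ${\rm Spec}(\mathcal{A}_C)$ as an open subvariety.

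For the Newton--Okounkov body, fix $\mathcal{C}$ and a $\mathbb{Z}$-basis $\tilde{\rho}\subseteq\sigma'_{{\bm\epsilon}(\mathcal{C})}$. The valuation $\mathfrak{v}_{\mathcal{C},\tilde{\rho}}$ of \cite[Section 7.1]{EHM} is obtained from $\nu_C$ by evaluating the points ${\mathsf w}(\rho)$, $\rho\in\tilde{\rho}$, and recording the degree in $t$. The step needing care is that every ${\mathsf w}(\rho)$ lies in ${\rm Sp}_{\mathbb{R}}(\mathcal{M}_C,\mathcal{C})$; this holds by condition (iv) of the strict dual pair. Hence each ${\mathsf w}(\rho)\circ\pi_{\mathcal{C}}^{-1}$ is linear on $M^{(C)}_{\mathcal{C}}$, and as $\rho$ runs over a basis these functions form a unimodular coordinate system on $M^{(C)}_{\mathcal{C}}$.

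Next I describe the image of $\mathfrak{v}_{\mathcal{C},\tilde{\rho}}$. Since $\overline{{\bf B}_C}$ is convex adapted, the values of $\mathfrak{v}_{\mathcal{C},\tilde{\rho}}$ on degree-$k$ elements are exactly the images of the basis elements $\overline{b}\,t^k$ with $\nu_C(\overline{b})\in k\widehat{\Delta}_C$. Up to this unimodular change of coordinates, these images are the lattice points of $k\,\pi_{\mathcal{C}}(\widehat{\Delta}_C)=k\,\widehat{\Delta}_{\mathcal{C}}(\Pi_C,\Pi_C^\ast,\lambda)$. Taking the closure of the union of the rescaled value sets then gives $\Delta(\mathcal{A}_C^{\widehat{\Delta}_C},\mathfrak{v}_{\mathcal{C},\tilde{\rho}})=\widehat{\Delta}_{\mathcal{C}}(\Pi_C,\Pi_C^\ast,\lambda)$. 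This is the content of \cite[Theorem 7.21]{EHM}, with the last identity coming from $\mu_{\mathcal{C}}(\widehat{\mathcal{O}})=\widehat{\Delta}_{\mathcal{C}}$.

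For the degeneration, the value semigroup is the semigroup of lattice points of the cone over the normal integral polytope $\widehat{\Delta}_{\mathcal{C}}$. It is finitely generated and saturated, and generated in degree $1$ by normality (transported by the lattice bijection of Theorem \ref{t:marked_CO_transfer}). The standard Rees-algebra construction for a valuation with finitely generated value semigroup and one-dimensional leaves (\cite[Theorem 7.22]{EHM}, cf.\ \cite{And}) then yields a flat family over $\Bbbk[t]$. Its general fiber is ${\rm Proj}(\mathcal{A}_C^{\widehat{\Delta}_C})$, and its special fiber is ${\rm Proj}$ of the semigroup algebra, i.e.\ the normal projective toric variety of $\widehat{\Delta}_{\mathcal{C}}$.
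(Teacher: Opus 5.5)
Your proposal is correct and follows the same route as the paper. The paper obtains this theorem directly from \cite[Theorems 7.21 and 7.22]{EHM} once the hypotheses in your items (a)--(e) are in place: the strict dual pair, the detropicalization with convex adapted basis, the PL description of $\widehat{\Delta}_C$, the origin in the interior, and normality via \cite[Theorem 18]{FFP} and \cite[Lemma 7.14]{EHM}.

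The only flaw is your parenthetical claim that normality is ``transported by the lattice bijection of Theorem~\ref{t:marked_CO_transfer}''. That bijection is only piecewise-linear and does not respect sums, so it does not by itself transfer normality. This is harmless: your item (e) already supplies normality, and the semigroup of lattice points in the cone over $\widehat{\Delta}_{\mathcal{C}}(\Pi_C,\Pi_C^\ast,\lambda)$ is saturated and finitely generated automatically.
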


\subsection{Cox rings of compactifications}

In this subsection, we compute the Cox ring of $X_{\mathcal{A}_C} (\widehat{\Delta}_C)$, using the arguments in \cite[Construction 1.2.4.1]{ADHL}, \cite[Theorem 7.19]{EHM}, and \cite[Section 7]{CEHM}. 
Let ${\rm Cl}(X_{\mathcal{A}_C} (\widehat{\Delta}_C))$ and ${\rm Cox}(X_{\mathcal{A}_C} (\widehat{\Delta}_C))$ denote the divisor class group and the Cox ring of $X_{\mathcal{A}_C} (\widehat{\Delta}_C)$, respectively. 
Since $\mathcal{A}_C$ is a UFD by Proposition \ref{p:UFD}, the divisor class group ${\rm Cl}(X_{\mathcal{A}_C} (\widehat{\Delta}_C))$ is generated by the images of the divisors $D_p$ for $p \in \Pi_C \setminus \Pi_C^\ast$ and $D_{p,p^\prime}$ for $p \in \Pi_C^\ast, p^\prime \in \Pi_C \setminus \Pi_C^\ast$ with $p^\prime \lessdot p$ in Theorem \ref{t:compactification} (see the proof of \cite[Theorem 7.19]{EHM}).
By \cite[Construction 1.2.4.1]{ADHL}, the Cox ring of $X_{\mathcal{A}_C} (\widehat{\Delta}_C)$ is described as 
\[{\rm Cox}(X_{\mathcal{A}_C} (\widehat{\Delta}_C)) \simeq H^0(X_{\mathcal{A}_C} (\widehat{\Delta}_C), \mathcal{S})/H^0(X_{\mathcal{A}_C} (\widehat{\Delta}_C), \mathcal{I}),\]
where $\mathcal{S}$ is a sheaf of divisorial algebras given by
\[\mathcal{S} \coloneqq \bigoplus_{D \in K} \mathcal{O}_{X_{\mathcal{A}_C} (\widehat{\Delta}_C)} (D)\]
for $K \coloneqq \{D_p \mid p \in \Pi_C \setminus \Pi_C^\ast\} \cup \{D_{p,p^\prime} \mid p \in \Pi_C^\ast,\ p^\prime \in \Pi_C \setminus \Pi_C^\ast,\ p^\prime \lessdot p\}$, and $\mathcal{I}$ is a sheaf of ideals of $\mathcal{S}$. 
Write $K = \{D^{(1)}, \ldots, D^{(L)}\}$, where $L =L_{\mathcal{M}_C}$ in \eqref{eq:number_of_divisors}.
By \cite[(7.10)]{CEHM}, $H^0(X_{\mathcal{A}_C} (\widehat{\Delta}_C), \mathcal{S})$ can be described as a $\Bbbk$-subalgebra 
\[\bigoplus_{{\bm r} = (r_1, \ldots, r_L) \in \mathbb{Z}^L} \mathcal{A}_C({\bm r}) t_1^{r_1} \cdots t_L^{r_L}\]
of a Laurent polynomial ring $\mathcal{A}_C[t_1^{\pm 1}, \ldots, t_L^{\pm 1}]$, where $\mathcal{A}_C({\bm r})$ is the $\Bbbk$-linear subspace of $\mathcal{A}_C$ spanned by
\[\{\overline{b} \mid b \in {\bf B}_C,\ {\rm ord}_{D^{(k)}} (\overline{b}) + r_k \geq 0\ (1 \leq k \leq L)\}.\]
In addition, we see by \cite[Lemma 7.12]{CEHM} that the inclusion map $H^0(X_{\mathcal{A}_C} (\widehat{\Delta}_C), \mathcal{S}) \hookrightarrow \mathcal{A}_C[t_1^{\pm 1}, \ldots, t_L^{\pm 1}]$ induces an injective $\Bbbk$-algebra homomorphism 
\[H^0(X_{\mathcal{A}_C} (\widehat{\Delta}_C), \mathcal{S})/H^0(X_{\mathcal{A}_C} (\widehat{\Delta}_C), \mathcal{I}) \hookrightarrow \mathcal{A}_C[t_1^{\pm 1}, \ldots, t_L^{\pm 1}]/\mathcal{J},\]
where $\mathcal{J}$ denotes the ideal of $\mathcal{A}_C[t_1^{\pm 1}, \ldots, t_L^{\pm 1}]$ generated by $u - t_1^{{\rm ord}_{D^{(1)}} (u)} \cdots t_L^{{\rm ord}_{D^{(L)}} (u)}$ for all units $u \in \mathcal{A}_C^\times$. 
When $D^{(k)} = D_p$ (respectively, $D^{(k)} = D_{p,p^\prime}$), we set $t_p \coloneqq t_k$ and $\varphi^{(k)} \coloneqq \varphi_p$ (respectively, $t_{p,p^\prime} \coloneqq t_k$ and $\varphi^{(k)} \coloneqq \varphi_{p,p^\prime}$).
By Proposition \ref{p:unit_elements}, it holds that $\overline{X_{i,j}} \in \mathcal{A}_C^\times$ for $q_{i, j} \in \Pi_C \setminus (\Pi^\ast_C \cup \widehat{\Pi}_C^\circ)$.
In this case, we can write $(i,j) = (2s-1, n+1-s)$ for some $1 \leq s \leq n$. 
Note that the marking of $q_{i+1,j} = q_{2s,n+1-s}$ is $\lambda_{s}$.
Setting $u = \overline{X_{i,j}}$, we have
\[{\rm ord}_{D_p} (u) = \varphi_{p} \circ \nu_C (u) = \varphi_{p} (\varepsilon_{i, \leq j}) = 
\begin{cases}
1 &\text{if}\ p = q_{i,\ell}\ \text{for some}\ \ell,\\
-1 &\text{if}\ p = q_{i+1,\ell}\ \text{for some}\ \ell,\\
0 &\text{otherwise}
\end{cases}
\]
for $p \in \Pi_C \setminus \Pi_C^\ast$, and 
\[{\rm ord}_{D_{p,p^\prime}} (u) = \varphi_{p,p^\prime} \circ \nu_C (u) = \varphi_{p,p^\prime} (\varepsilon_{i, \leq j}) = 
\begin{cases}
-1 &\text{if}\ p = q_{i+1, j}\ \text{and}\ p^\prime = q_{i, j},\\
0 &\text{otherwise}
\end{cases}
\]
for $p \in \Pi_C^\ast$ and $p^\prime \in \Pi_C \setminus \Pi_C^\ast$ such that $p^\prime \lessdot p$.
Hence it follows that
\begin{equation}\label{eq:ideal_generator}
\begin{aligned}
u - t_1^{{\rm ord}_{D^{(1)}} (u)} \cdots t_L^{{\rm ord}_{D^{(L)}} (u)} = \overline{X_{i,j}} - t_{q_{i,1}} \cdots t_{q_{i,j}} t_{q_{i+1,1}}^{-1} \cdots t_{q_{i+1,j-1}}^{-1} t_{q_{i+1, j}, q_{i, j}}^{-1}.
\end{aligned}
\end{equation}
By Propositon \ref{p:unit_elements}, the ideal $\mathcal{J}$ is generated by \eqref{eq:ideal_generator} for all $q_{i, j} \in \Pi_C \setminus (\Pi^\ast_C \cup \widehat{\Pi}_C^\circ)$.
Define $\eta_{s} \in \mathcal{A}_C[t_1^{\pm 1}, \ldots, t_L^{\pm 1}]$ by 
\[\eta_{s} \coloneqq \overline{X_{i,j}} t_{q_{i,1}}^{-1} \cdots t_{q_{i,j}}^{-1} t_{q_{i+1,1}} \cdots t_{q_{i+1,j-1}} t_{q_{i+1, j}, q_{i, j}}.\]
Then we have $\eta_{s} - 1 \in \mathcal{J}$. 
Set 
\[{\bm v}_s \coloneqq ({\bm e}_{q_{i,j}} + \cdots + {\bm e}_{q_{i,1}}) - ({\bm e}_{r_{i,j}} + \cdots + {\bm e}_{r_{i,1}}) + ({\bm e}_{r_{i+1,j-1}} + \cdots + {\bm e}_{r_{i+1,1}}) + {\bm e}_{r^\ast_s} \in \mathbb{Z}^{\Pi_C \setminus \Pi_C^\ast} \times \mathbb{Z}^L,\]
where ${\bm e}_{r_{k,\ell}} \in \mathbb{Z}^L$ for $q_{k,\ell} \in \Pi_C \setminus \Pi^\ast_C$ (respectively, ${\bm e}_{r^\ast_s} \in \mathbb{Z}^L$) denotes the unit vector for the coordinate corresponding to $t_{q_{k,\ell}}$ (respectively, $t_{q_{i+1, j}, q_{i, j}}$).
Then it follows that ${\bm v}_s \in V(\mathcal{M}_C, \emptyset) \times \mathbb{Z}^L$.
Setting 
\[\mathscr{H}_C \coloneqq \{(m, {\bm r}) \in \mathcal{M}_C \times \mathbb{Z}^L \mid \varphi^{(k)} (m) + r_k \geq 0\ (1 \leq k \leq L)\},\]
we have 
\[\mathcal{A}_C({\bm r}) = {\rm Span}_{\Bbbk} \{\overline{b} \mid b \in {\bf B}_C,\ (m_b, {\bm r}) \in \mathscr{H}_C\}.\]
Recall that $\Sigma(\mathcal{M}_C)$ denotes the PL fan of $\mathcal{M}_C$. 
For $\sigma \in \Sigma(\mathcal{M}_C)$, write $\mathscr{H}_C(\sigma) \coloneqq \mathscr{H}_C \cap (\sigma \times \mathbb{Z}^L)$ and 
\[\mathscr{H}_{C, \emptyset}(\sigma) \coloneqq \{(\pi_\emptyset(m), {\bm r}) \mid (m, {\bm r}) \in \mathscr{H}_C (\sigma)\} \subseteq \mathbb{Z}^{\Pi_C \setminus \Pi_C^\ast} \times \mathbb{Z}^L.\] 
For $1 \leq k \leq L$, we define $\widehat{\varphi}^{(k)}_\emptyset \colon \mathbb{Z}^{\Pi_C \setminus \Pi_C^\ast} \times \mathbb{Z}^L \rightarrow \mathbb{Z}$ by $\widehat{\varphi}^{(k)}_\emptyset ({\bm x}, (r_1, \ldots, r_L)) \coloneqq \varphi^{(k)} (\pi_\emptyset^{-1} ({\bm x})) + r_k$ for $({\bm x}, (r_1, \ldots, r_L)) \in \mathbb{Z}^{\Pi_C \setminus \Pi_C^\ast} \times \mathbb{Z}^L$. 
Then it follows that 
\[\mathscr{H}_{C, \emptyset}(\sigma) = \{({\bm x}, {\bm r}) \in \pi_\emptyset(\sigma) \times \mathbb{Z}^L \mid \widehat{\varphi}^{(k)}_\emptyset ({\bm x}, {\bm r}) \geq 0\ (1 \leq k \leq L)\}.\] 
Since $\varphi^{(k)} \circ \pi_\emptyset^{-1}$ is linear on $\pi_\emptyset(\sigma)$ for each $1 \leq k \leq L$, we see that $\mathscr{H}_{C, \emptyset}(\sigma)$ is the set of lattice points in a rational convex polyhedral cone. 
In particular, $\mathscr{H}_{C, \emptyset}(\sigma)$ is a semigroup. 
By the definition of $\varphi^{(k)}$, we see that $\widehat{\varphi}^{(k)}_\emptyset ({\bm v}_{s}) = \widehat{\varphi}^{(k)}_\emptyset (-{\bm v}_s) = 0$ for all $1 \leq k \leq L$, which implies that $\pm {\bm v}_s \in \mathscr{H}_{C, \emptyset}(\sigma)$ for all $\sigma \in \Sigma(\mathcal{M}_C)$ and $1 \leq s \leq n$.
For $q_{i,j} \in \widehat{\Pi}_C^\circ$ and $\epsilon \in \{\pm 1\}$, we define ${\bm f}_{q_{i,j},\epsilon}$ by
\[{\bm f}_{q_{i,j}, \epsilon} \coloneqq
\begin{cases}
{\bm e}_{q_{i,j}} - {\bm e}_{r_{i,j}} + {\bm e}_{r_{i+1,j}} + \sum_{\ell = 1}^{j-1} ({\bm e}_{q_{i,\ell}} - {\bm e}_{r_{i,\ell}} + {\bm e}_{r_{i+1,\ell}}) &\text{if}\ \epsilon = 1,\\ 
-{\bm e}_{q_{i,j}} + {\bm e}_{r_{i,j}} + \sum_{\ell = 1}^{j-1} (-{\bm e}_{q_{i,\ell}} + {\bm e}_{r_{i,\ell}} - {\bm e}_{r_{i+1,\ell}}) &\text{if}\ \epsilon = -1.
\end{cases}\]
In particular, we have ${\bm f}_{q_{i,j}, 1} + {\bm f}_{q_{i,j}, -1} = {\bm e}_{r_{i+1,j}}$. 

\begin{ex}
Let $n = 2$. 
Then we have 
\begin{align*}
&{\bm v}_1 = ({\bm e}_{q_{1,2}} + {\bm e}_{q_{1,1}}) - ({\bm e}_{r_{1,2}} + {\bm e}_{r_{1,1}}) + {\bm e}_{r_{2,1}} + {\bm e}_{r^\ast_1}, &&&{\bm v}_2 = {\bm e}_{q_{2,1}} - {\bm e}_{r_{2,1}} + {\bm e}_{r^\ast_2},\\
&{\bm f}_{q_{1,1}, 1} = {\bm e}_{q_{1,1}} - {\bm e}_{r_{1,1}} + {\bm e}_{r_{2,1}},&&&{\bm f}_{q_{1,1}, -1} = -{\bm e}_{q_{1,1}} + {\bm e}_{r_{1,1}},\\
&{\bm f}_{q_{2,1}, 1} = {\bm e}_{q_{2,1}} - {\bm e}_{r_{2,1}} + {\bm e}_{r_{3,1}},&&&{\bm f}_{q_{2,1}, -1} = -{\bm e}_{q_{2,1}} + {\bm e}_{r_{2,1}}.
\end{align*}
\end{ex}

\begin{prop}\label{p:generator_Cox}
For $\sigma_{\bm \epsilon} \in \Sigma(\mathcal{M}_C)$ with ${\bm \epsilon} = (\epsilon_{i,j} \mid q_{i,j} \in \widehat{\Pi}_C^\circ)$, the semigroup $\mathscr{H}_{C, \emptyset}(\sigma_{\bm \epsilon})$ is generated by ${\bm e}_{r_1}, \ldots, {\bm e}_{r_L}, \pm {\bm v}_1, \ldots, \pm {\bm v}_n$, and ${\bm f}_{q_{i,j}, \epsilon_{i,j}}$ for $q_{i,j} \in \widehat{\Pi}_C^\circ$, where ${\bm e}_{r_1}, \ldots, {\bm e}_{r_L} \in \mathbb{Z}^L$ denote the unit vectors.
\end{prop}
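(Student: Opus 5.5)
Inside the cone $\pi_\emptyset(\sigma_{\bm\epsilon})$ every $\varphi^{(k)}\circ\pi_\emptyset^{-1}$ is linear, so $\mathscr{H}_{C,\emptyset}(\sigma_{\bm\epsilon})$ is the set of lattice points of the rational polyhedral cone cut out by the inequalities $\epsilon_{i,j}(x_{i,j}-x_{i,j+1})\ge 0$ for $q_{i,j}\in\widehat{\Pi}_C^\circ$ and the linear inequalities $\widehat{\varphi}^{(k)}_\emptyset\ge 0$. The plan is to change coordinates so that this cone becomes a product of a lattice and an orthant-like piece with an evident Hilbert basis.

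\emph{Step 1: coordinates.} I will use the $\mathbb{Z}$-basis $\{{\bm e}_{i,\le j}\}$ of $M_\emptyset^{(C)}$, where ${\bm e}_{i,\le j}={\bm e}_{q_{i,1}}+\cdots+{\bm e}_{q_{i,j}}$. Writing ${\bm x}=\sum z_{i,j}{\bm e}_{i,\le j}$, one has $z_{i,j}=x_{i,j}-x_{i,j+1}$ for $q_{i,j}\in\widehat{\Pi}_C^\circ$, and the $z_{2s-1,n+1-s}$ are free. The cone conditions then read $\epsilon_{i,j}z_{i,j}\ge0$. I will compute each $\widehat\varphi^{(k)}_\emptyset$ in these coordinates on $\sigma_{\bm\epsilon}$ using the formulas $\varphi_p(m)=x_p+\min(\ldots)$ and $\varphi_{p,p'}(m)=-x_{p'}$. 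The sign data $\bm\epsilon$ fixes which term realizes the minimum: for $p=q_{i+1,j}$ the minimum is over $-x_{i,j}$ and $-x_{i,j+1}$, which is decided by $\epsilon_{i,j}$. Hence $\widehat{\varphi}^{(k)}_\emptyset$ is an explicit linear form in the $z$'s plus $r_k$.

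\emph{Step 2: change of variables.} I will introduce the new variables $s_k\coloneqq\widehat{\varphi}^{(k)}_\emptyset({\bm x},{\bm r})$ for $1\le k\le L$, together with $|z_{i,j}|=\epsilon_{i,j}z_{i,j}$ for $q_{i,j}\in\widehat{\Pi}_C^\circ$ and the free $z_{2s-1,n+1-s}$. The claim to prove is that $({\bm x},{\bm r})\mapsto(s_1,\dots,s_L,(\epsilon_{i,j}z_{i,j}),(z_{2s-1,n+1-s})_s)$ is a unimodular isomorphism $\mathbb{Z}^{\Pi_C\setminus\Pi_C^\ast}\times\mathbb{Z}^L\to\mathbb{Z}^L\times\mathbb{Z}^{\widehat{\Pi}_C^\circ}\times\mathbb{Z}^n$. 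This holds because $s_k=r_k+(\text{linear in }{\bm x})$, so the map is triangular with unit diagonal relative to the ${\bm r}$ versus ${\bm x}$ split. Under this map $\mathscr{H}_{C,\emptyset}(\sigma_{\bm\epsilon})$ becomes $\mathbb{Z}_{\ge0}^L\times\mathbb{Z}_{\ge0}^{\widehat{\Pi}_C^\circ}\times\mathbb{Z}^n$. That set is generated by the unit vectors of the first two factors and $\pm$ the unit vectors of the last.

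\emph{Step 3: matching generators.} Next I pull these generators back. The unit vector in $s_k$ with all other new coordinates $0$ is ${\bm e}_{r_k}$. Indeed, ${\bm x}=0$ forces ${\bm r}={\bm e}_{r_k}$, which gives the correct $s$-values. The unit vector in the $n$ free $z$-coordinates should pull back to ${\bm v}_s$. This follows from the already established facts that $\widehat\varphi^{(k)}_\emptyset({\bm v}_s)=0$ for all $k$, that ${\bm v}_s$ has $\bm x$-part ${\bm e}_{2s-1,\le n+1-s}$, and that all its other $z$-coordinates vanish. Finally, the unit vector in $\epsilon_{i,j}z_{i,j}$ must pull back to ${\bm f}_{q_{i,j},\epsilon_{i,j}}$. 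Its ${\bm x}$-part is $\epsilon_{i,j}{\bm e}_{i,\le j}$, as the definition shows, so what remains is to check that $\widehat\varphi^{(k)}_\emptyset({\bm f}_{q_{i,j},\epsilon_{i,j}})=0$ for every $k$. Because the map is bijective, these three families then generate the whole semigroup.

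\emph{Main obstacle.} The hard part is this last verification, namely that $\widehat\varphi^{(k)}_\emptyset({\bm f}_{q_{i,j},\epsilon})=0$ on $\sigma_{\bm\epsilon}$ for all $k$. It requires evaluating $\varphi_{q_{i,\ell}}$, $\varphi_{q_{i+1,\ell}}$, $\varphi_{q_{i+1,j}}$ and the marked $\varphi_{p,p'}$ on $\pm{\bm e}_{i,\le j}$, taking care to select the correct branch of each minimum. For instance, $\varphi_{q_{i+1,j}}(\pm{\bm e}_{i,\le j})$ equals $-1$ or $0$ depending on the sign, and this is exactly what produces the asymmetric ${\bm e}_{r_{i+1,\ell}}$ terms in ${\bm f}_{q_{i,j},1}$ versus ${\bm f}_{q_{i,j},-1}$. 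The boundary cases also need separate attention: $j=1$, where the lower neighbour is marked with value $0$, and $q_{i+1,j}$ marked, which contributes the $t_{p,p'}$ coordinates. I would carry out these checks by cases, using Proposition~\ref{p:relation_sp_points_type_C_GT} as a consistency check.
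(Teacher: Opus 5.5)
Your proposal is correct and is essentially the paper's proof. Your coordinates $\epsilon_{i,j}z_{i,j}$, $s_k$ and the free $z_{2s-1,n+1-s}$ are exactly the paper's unimodular map $({\bm x},{\bm r})\mapsto(\gamma^{(1)}_{i,j},\gamma^{(2)}_{i,j},\gamma^{(1)}_s,\gamma^{(2)}_s,x_{2s-1,n+1-s})$, and the paper likewise concludes by noting that ${\bm e}_{r_k}$, ${\bm v}_s$ and ${\bm f}_{q_{i,j},\epsilon_{i,j}}$ are the preimages of the unit vectors. Your block-triangular argument for unimodularity is more explicit than the paper's counting remark, and the branch check you flag as the main obstacle does go through as you predict: $\varphi_{q_{i+1,j}}(\pm{\bm e}_{i,\le j})$ is $-1$ or $0$, and every $\varphi_{p,p'}$ vanishes on $\pm{\bm e}_{i,\le j}$.
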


\begin{proof}
For $q_{i,j} \in \widehat{\Pi}_C^\circ$ and ${\bm x} = (x_{i,j})_{i,j} \in \mathbb{Z}^{\Pi_C \setminus \Pi_C^\ast}$, define $\psi_{i+1,j}({\bm x}) \in \mathbb{Z}$ by 
\[\psi_{i+1,j}({\bm x}) \coloneqq 
\begin{cases}
x_{i+1,j}-x_{i,j} &\text{if}\ \epsilon_{i,j} = 1,\\ 
x_{i+1,j}-x_{i,j+1} &\text{if}\ \epsilon_{i,j} = -1.
\end{cases}\]
For $m \in \sigma_{\bm \epsilon}$, it follows that $\varphi_{q_{i+1,j}} (m) = \psi_{i+1,j}(\pi_\emptyset (m))$. 
For $1 \leq s \leq n$ and ${\bm x} = (x_{i,j})_{i,j} \in \mathbb{Z}^{\Pi_C \setminus \Pi_C^\ast}$, write $\psi_{s}({\bm x}) \coloneqq -x_{i^\prime, j^\prime}$, where $(i^\prime, j^\prime) \coloneqq (2s-1,n+1-s)$.
Then we have $\varphi_{q_{i^\prime +1, j^\prime}, q_{i^\prime, j^\prime}} (m) = \psi_{s}(\pi_\emptyset(m))$ for $m \in \mathcal{M}_C$.
By definition, the semigroup $\mathscr{H}_{C, \emptyset}(\sigma_{\bm \epsilon})$ is the set of $((x_{i,j})_{i,j}, {\bm r}) \in \mathbb{Z}^{\Pi_C \setminus \Pi_C^\ast} \times \mathbb{Z}^L$ satisfying 
\begin{align*}
&\gamma_{i,j}^{(1)} \coloneqq \epsilon_{i,j} (x_{i,j} - x_{i,j+1}) \geq 0\quad \text{for all}\ q_{i,j} \in \widehat{\Pi}_C^\circ,\\
&\gamma_{i,j}^{(2)} \coloneqq \psi_{i+1,j}({\bm x}) + r_{i+1,j} \geq 0\quad \text{for all}\ q_{i,j} \in \widehat{\Pi}_C^\circ,\\
&\gamma_{s}^{(1)} \coloneqq x_{1,s} + r_{1,s} \geq 0\quad \text{for all}\ 1 \leq s \leq n,\\
&\gamma_{s}^{(2)} \coloneqq \psi_{s} ({\bm x}) + r^\ast_{s} \geq 0\quad \text{for all}\ 1 \leq s \leq n,
\end{align*}
where $r_{k,\ell}$ for $q_{k,\ell} \in \Pi_C \setminus \Pi^\ast_C$ (respectively, $r^\ast_s$) denotes the coordinate of ${\bm r}$ corresponding to $t_{q_{k,\ell}}$ (respectively, $t_{q_{i^\prime+1, j^\prime}, q_{i^\prime, j^\prime}}$).
Note that the number of inequalities is $2|\Pi_C \setminus \Pi^\ast_C|$, which coincides with ${\rm rank}(\mathbb{Z}^{\Pi_C \setminus \Pi_C^\ast} \times \mathbb{Z}^L) - \dim_{\mathbb{R}} (V(\mathcal{M}_C, \emptyset))$ since $L = |\Pi_C \setminus \Pi_C^\ast| + n$ and $\dim_{\mathbb{R}} (V(\mathcal{M}_C, \emptyset)) = n$. 
Hence if we fix ${\bm \epsilon} = (\epsilon_{i,j} \mid q_{i,j} \in \widehat{\Pi}_C^\circ)$, then we obtain a unimodular transformation $\mathbb{Z}^{\Pi_C \setminus \Pi_C^\ast} \times \mathbb{Z}^L \rightarrow \mathbb{Z}^{\Pi_C \setminus \Pi_C^\ast} \times \mathbb{Z}^L$ given by 
\begin{align*}
((x_{i,j})_{i,j}, {\bm r}) \mapsto ((\gamma_{i,j}^{(1)}, \gamma_{i,j}^{(2)} \mid q_{i,j} \in \widehat{\Pi}_C^\circ), (\gamma_{s}^{(1)}, \gamma_{s}^{(2)}, x_{2s-1,n+1-s} \mid 1 \leq s \leq n)).
\end{align*}
Since the elements ${\bm e}_{r_1}, \ldots, {\bm e}_{r_L}, {\bm v}_1, \ldots, {\bm v}_n$, and ${\bm f}_{q_{i,j}, \epsilon_{i,j}}$ for $q_{i,j} \in \widehat{\Pi}_C^\circ$ bijectively correspond to the unit vectors under this unimodular transformation, we conclude the assertion of the proposition. 
\end{proof}

Let us consider a polynomial ring $\Bbbk[W_q \mid q \in \widehat{\Pi}_C^\circ][Z_q, t_q \mid q \in \Pi_C \setminus \Pi_C^\ast]$, and define a $\Bbbk$-algebra $\widetilde{{\rm Cox}}(X_{\mathcal{A}_C} (\widehat{\Delta}_C))$ to be the quotient ring 
\[\Bbbk[W_q \mid q \in \widehat{\Pi}_C^\circ][Z_q, t_q \mid q \in \Pi_C \setminus \Pi_C^\ast]/(W_{q_{i,j}} Z_{q_{i,j}} - t_{q_{i+1,j}} - W_{q_{i+1,j-1}} Z_{q_{i+1,j}} \mid q_{i,j} \in \widehat{\Pi}_C^\circ),\]
where $W_{q_{i+1,0}} \coloneqq 1$ when $j = 1$.

\begin{thm}\label{t:Cox_type_C}
The Cox ring ${\rm Cox}(X_{\mathcal{A}_C} (\widehat{\Delta}_C))$ of $X_{\mathcal{A}_C} (\widehat{\Delta}_C)$ is isomorphic to $\widetilde{{\rm Cox}}(X_{\mathcal{A}_C} (\widehat{\Delta}_C))$, 
and hence to the polynomial ring in $|\Pi_C \setminus \Pi_C^\ast| - n +L = 2|\Pi_C \setminus \Pi_C^\ast| = 2n^2$ variables as a $\Bbbk$-algebra.
\end{thm}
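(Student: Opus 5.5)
The plan follows \cite[Section 7]{CEHM} and \cite[Theorem 7.19]{EHM}. The starting point is the description already set up: ${\rm Cox}(X_{\mathcal{A}_C}(\widehat{\Delta}_C))$ embeds into $\mathcal{A}_C[t_1^{\pm 1},\ldots,t_L^{\pm 1}]/\mathcal{J}$ as the image of $\bigoplus_{\bm r}\mathcal{A}_C({\bm r})\,t^{\bm r}$. Since $\overline{\bf B}_C$ is a convex adapted basis (Theorem \ref{t:detropicalization_type_C}), each graded piece $\mathcal{A}_C({\bm r})$ is spanned by the $\overline{b}$ with $(m_b,{\bm r})\in\mathscr{H}_C$. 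Because $\mathcal{M}_C$ is the union of the cones $\sigma_{\bm\epsilon}$, the algebra $H^0(\mathcal{S})$ is spanned by monomials $\overline{b}\,t^{\bm r}$ with $(\pi_\emptyset(m_b),{\bm r})\in\mathscr{H}_{C,\emptyset}(\sigma_{\bm\epsilon})$ for some ${\bm\epsilon}$.

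By Proposition \ref{p:generator_Cox}, each such lattice point is a nonnegative combination of ${\bm e}_{r_k}$, $\pm{\bm v}_s$ and ${\bm f}_{q_{i,j},\epsilon_{i,j}}$. The corresponding candidate generators are the following.
\begin{itemize}
\item $t_k$ for ${\bm e}_{r_k}$.
\item $\eta_s^{\pm1}$ for $\pm{\bm v}_s$; these become $1$ modulo $\mathcal{J}$.
\item $W_{q_{i,j}}:=\overline{X_{q_{i,1}}\cdots}$ for ${\bm f}_{q_{i,j},1}$. Concretely, $W_{q_{i,j}}$ is the basis element $\overline{b}$ with $m_b=\varepsilon_{i,\le j}$, i.e.\ $\overline{X_{i,j}}$, multiplied by $t_{q_{i,1}}^{-1}\cdots t_{q_{i,j}}^{-1}t_{q_{i+1,1}}\cdots t_{q_{i+1,j}}$.
\item $Z_{q_{i,j}}:=\overline{Y_{i,j}}\,t_{q_{i,j}}\prod_{\ell<j}t_{q_{i,\ell}}t_{q_{i+1,\ell}}^{-1}$ for ${\bm f}_{q_{i,j},-1}$.
\item For $q\notin\widehat{\Pi}_C^\circ$ (and the $q_{1,s}$-type directions), $Z_q$ is $\overline{Y_q}$ times the appropriate $t$-monomial, and $X_q$ is absorbed via $\eta_s$.
\end{itemize}
Two things then need checking. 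First, the product of elements in the same cone is the element of the sum vector; this holds because products of standard monomials in one linearity cone of $\nu_C$ multiply without correction terms, which is exactly what the valuation property \eqref{eq:type_C_inductive_step} gives. Second, the relation $g_{i,j}$ translates, after multiplying by the appropriate $t$-monomial, into $W_{q_{i,j}}Z_{q_{i,j}}=t_{q_{i+1,j}}+W_{q_{i+1,j-1}}Z_{q_{i+1,j}}$; this is consistent with ${\bm f}_{q_{i,j},1}+{\bm f}_{q_{i,j},-1}={\bm e}_{r_{i+1,j}}$. Together these give a surjective $\Bbbk$-algebra map $\widetilde{\rm Cox}\to{\rm Cox}$.

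For injectivity I will compare bases. Using the same lex Gröbner argument as for $I_C$ (leading terms $W_{q}Z_{q}$ are pairwise coprime), the standard monomials of $\widetilde{\rm Cox}$ are the monomials in $W,Z,t$ with $\min(\deg W_q,\deg Z_q)=0$. I will show these map bijectively onto the monomial spanning set $\{\overline{b}\,t^{\bm r}\}$ modulo $\mathcal{J}$. That set is linearly independent because distinct $b$ are independent in $\mathcal{A}_C$ and $\mathcal{J}$ only identifies the $\eta_s$-directions. The bijection is the unimodular change of coordinates from the proof of Proposition \ref{p:generator_Cox}, applied cone by cone: a monomial with $\min(\deg W_q,\deg Z_q)=0$ for all $q$ lies in a single cone $\sigma_{\bm\epsilon}$, where $\epsilon_q$ records which of $W_q,Z_q$ occurs.

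The step I expect to be the main obstacle is making this cone-by-cone bijection glue consistently across cones, including the boundary cases $\epsilon$ ambiguous and the identification by $\eta_s$. I would handle it by working in the $\pi_\emptyset$-coordinates, where the ${\bm f}$'s for both signs share the face $\gamma^{(1)}_{i,j}=0$.

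Finally, $\widetilde{\rm Cox}$ is a polynomial ring, since each relation solves for $t_{q_{i+1,j}}$. Because $q_{i,j}\in\widehat{\Pi}_C^\circ$ corresponds bijectively to $q_{i+1,j}\in\Pi_C\setminus\Pi_C^\ast$ with $i\ge1$, eliminating these $t$'s leaves the variables $W_q$ ($q\in\widehat{\Pi}_C^\circ$), $Z_q$ (all $q$), and $t_{q_{1,s}}$. That is $(n^2-n)+n^2+n=2n^2$ variables, as claimed.
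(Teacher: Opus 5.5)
Your proposal is correct and follows the paper's proof. The paper's steps are the generators $t_k$, $\eta_s^{\pm 1}$, $\xi_{i,j}$, $\xi_{i,j}^\prime$ read off from Proposition \ref{p:generator_Cox}, the relations $\xi_{i,j}\xi_{i,j}^\prime=t_{q_{i+1,j}}+\xi_{i+1,j-1}\xi_{i+1,j}^\prime$ coming from $g_{i,j}$, the identification of $Z_{q_{2s-1,n+1-s}}$ with $t_{q_{2s,n+1-s},q_{2s-1,n+1-s}}$ via $\eta_s\equiv 1$ modulo $\mathcal{J}$, and the elimination of the $t_{q_{i+1,j}}$, which is exactly your route.

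Your one addition is an explicit injectivity check, matching standard monomials of $\widetilde{\rm Cox}$ with $\mathbb{Z}^n$-orbits of points of $\mathscr{H}_C$, which the paper leaves implicit. It is easier than you fear. In-cone products need no correction terms simply because a monomial with no factor $X_qY_q$ already lies in ${\bf B}_C$, not because of \eqref{eq:type_C_inductive_step}. For the gluing, if one point has representations from the cones $\sigma_{\bm\epsilon}$ and $\sigma_{{\bm\epsilon}^\prime}$, the coefficients of ${\bm f}_{q,\epsilon^\prime_q}$ with $\epsilon^\prime_q\neq\epsilon_q$ vanish, since $\gamma^{(1)}_q=0$ there. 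Hence both representations are written in the unimodular basis of $\sigma_{\bm\epsilon}$ and so coincide.
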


\begin{proof}
By Proposition \ref{p:generator_Cox}, $H^0(X_{\mathcal{A}_C} (\widehat{\Delta}_C), \mathcal{S})$ is generated by $t_1, \ldots, t_L,\eta_1^{\pm 1}, \ldots, \eta_n^{\pm 1}$, and $\xi_{i,j}, \xi_{i,j}^\prime$ for $q_{i,j} \in \widehat{\Pi}_C^\circ$, where 
\begin{align*}
&\xi_{i,j} \coloneqq \overline{X_{i,j}} \cdot t_{q_{i,j}}^{-1} t_{q_{i+1,j}} \cdot \prod_{\ell = 1}^{j-1} (t_{q_{i,\ell}}^{-1} t_{q_{i+1,\ell}}),\\
&\xi_{i,j}^\prime \coloneqq \overline{Y_{i,j}} \cdot t_{q_{i,j}} \cdot \prod_{\ell = 1}^{j-1} (t_{q_{i,\ell}} t_{q_{i+1,\ell}}^{-1}).
\end{align*} 
Recall that the ideal $\mathcal{J}$ is generated by $\eta_1-1, \ldots, \eta_n-1$. 
For $1 \leq s \leq n$, write $\xi_{i^\prime, j^\prime}^\prime \coloneqq t_{q_{i^\prime +1, j^\prime}, q_{i^\prime, j^\prime}}$, where $(i^\prime, j^\prime) \coloneqq (2s-1,n+1-s)$. 
Then the relation $\eta_{s} = 1 \bmod \mathcal{J}$ can be written as 
\[\xi_{i^\prime, j^\prime}^\prime = \overline{Y_{i^\prime, j^\prime}} \cdot t_{q_{i^\prime, j^\prime}} \cdot \prod_{\ell = 1}^{j^\prime-1} (t_{q_{i^\prime,\ell}} t_{q_{i^\prime +1,\ell}}^{-1}) \bmod \mathcal{J}.\]
For $q_{i,j} \in \widehat{\Pi}_C^\circ$, we see that 
\[\xi_{i,j} \xi_{i,j}^\prime = \overline{X_{i,j} Y_{i,j}} \cdot t_{q_{i+1,j}} = \overline{1 + X_{i+1,j-1} Y_{i+1,j}} \cdot t_{q_{i+1,j}} = t_{q_{i+1,j}} + \xi_{i+1,j-1} \xi_{i+1,j}^\prime,\]
where $\xi_{i+1,0} \coloneqq 1$ when $j = 1$.
From these, we obtain a $\Bbbk$-algebra isomorphism 
\[{\rm Cox}(X_{\mathcal{A}_C} (\widehat{\Delta}_C)) \xrightarrow{\sim} \widetilde{{\rm Cox}}(X_{\mathcal{A}_C} (\widehat{\Delta}_C))\]
given by $\eta_s \mapsto 1, \xi_{2s-1,n+1-s}^\prime \mapsto Z_{q_{2s-1,n+1-s}}$ for $1 \leq s \leq n$ and $\xi_{i,j} \mapsto W_{q_{i,j}}, \xi_{i,j}^\prime \mapsto Z_{q_{i,j}}$ for $q_{i,j} \in \widehat{\Pi}_C^\circ$.
Since the relation $W_{q_{i,j}} Z_{q_{i,j}} = t_{q_{i+1,j}} + W_{q_{i+1,j-1}} Z_{q_{i+1,j}}$ can be written as $t_{q_{i+1,j}} = W_{q_{i,j}} Z_{q_{i,j}} - W_{q_{i+1,j-1}} Z_{q_{i+1,j}}$, $\widetilde{{\rm Cox}}(X_{\mathcal{A}_C} (\widehat{\Delta}_C))$ is isomorphic to the polynomial ring 
\[\Bbbk[W_q \mid q \in \widehat{\Pi}_C^\circ][Z_q \mid q \in \Pi_C \setminus \Pi_C^\ast][t_{q_{1,1}}, \ldots, t_{q_{1,n}}].\]
This proves the theorem.
\end{proof}

\begin{rem}\label{r:general_case}
Let $(\Pi, \Pi^\ast, \lambda)$ be a graded marked poset with $\lambda \in \mathbb{Z}^{\Pi^\ast}$ satisfying $\lambda_a = \lambda_b$ for all $a, b \in \Pi^\ast$ such that $r(a) = r(b)$. 
We assume that $(\Pi, \Pi^\ast)$ satisfies condition $(\spadesuit)$ in Introduction. 
In addition, suppose that there exists an interior lattice point ${\bm u} \in \mathcal{O} (\Pi, \Pi^\ast, \lambda)$ satisfying \eqref{eq:assumption}.
Then our proofs of all the results in this section can be naturally extended to $(\Pi, \Pi^\ast, \lambda)$ and $\widehat{\Delta}_{\mathcal{C}} (\Pi, \Pi^\ast, \lambda)$. 
Hence we obtain Theorems \ref{t:1}, \ref{t:2}, and \ref{t:3} in Introduction. 
\end{rem}

\begin{ex}
Let $(\Pi, \Pi^\ast, \lambda)$ be a graded marked poset with $\lambda = (\lambda_1, \lambda_2) \in \mathbb{Z}^2$ whose marked Hasse diagram is given in Figure \ref{Hasse_example}.
This $(\Pi, \Pi^\ast)$ satisfies condition $(\spadesuit)$ in Introduction. 
In addition, if $\lambda_2 - \lambda_1 \geq 4$, then there exists an interior lattice point ${\bm u} \in \mathcal{O} (\Pi, \Pi^\ast, \lambda)$ satisfying \eqref{eq:assumption}.
Hence Theorems \ref{t:1}, \ref{t:2}, and \ref{t:3} in Introduction can be applied to this $(\Pi, \Pi^\ast, \lambda)$. 
\begin{figure}[!ht]
\begin{center}
   \includegraphics[width=8.0cm,bb=60mm 180mm 160mm 230mm,clip]{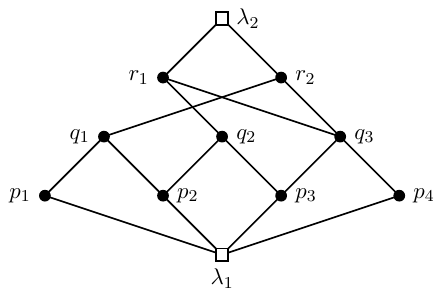}
	\caption{A Hasse diagram satisfying condition $(\spadesuit)$.}
	\label{Hasse_example}
\end{center}
\end{figure}
\end{ex}

\section{Case of Gelfand--Tsetlin posets of type $A$}

Fix $n \in \mathbb{Z}_{>0}$ and take $\lambda = (\lambda_1, \ldots, \lambda_n, \lambda_{n+1}) \in \mathbb{Z}^{n+1}$ such that $\lambda_1 \leq \cdots \leq \lambda_n \leq \lambda_{n+1}$.
In this section, let us consider the \emph{Gelfand--Tsetlin poset} $(\Pi_A, \Pi^\ast_A, \lambda)$ of type $A_n$ whose marked Hasse diagram is described in Figure \ref{type_A_Hasse}, where we write 
\[\Pi_A \setminus \Pi^\ast_A = \{q_{i,j} \mid 1 \leq j \leq n,\ n+1-j \leq i \leq 2n+1 -2j\}\]
and denote by the circles (respectively, the squares) the elements of $\Pi_A \setminus \Pi^\ast_A$ (respectively, $\Pi^\ast_A$). 
The marking $\lambda = (\lambda_a)_{a \in \Pi^\ast_A}$ is given as $(\lambda_1, \lambda_{2}, \ldots, \lambda_{n+1})$.
For $1 \leq i \leq n+1$, let $q_i^\ast$ denote the only one element of $\Pi^\ast_A$ with $r(q_i^\ast) = 2i-2$, that is, $\lambda_{q_i^\ast} = \lambda_i$. 
\begin{figure}[!ht]
\begin{center}
   \includegraphics[width=10.0cm,bb=40mm 120mm 170mm 230mm,clip]{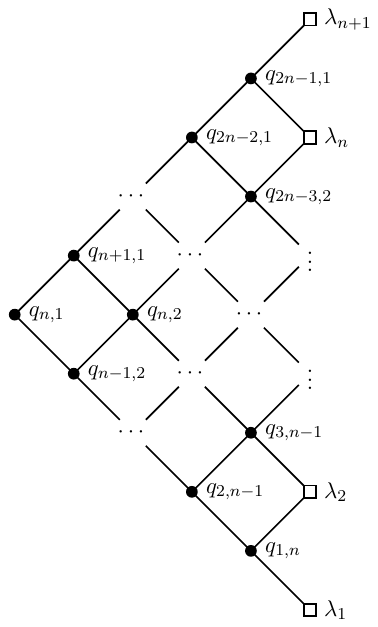}
	\caption{The marked Hasse diagram of the Gelfand--Tsetlin poset $(\Pi_A, \Pi^\ast_A, \lambda)$.}
	\label{type_A_Hasse}
\end{center}
\end{figure}

\begin{rem}
By definition, the marked order polytope $\mathcal{O}(\Pi_A, \Pi^\ast_A, \lambda)$ coincides with a Gelfand--Tsetlin polytope (see \cite[Section 5]{Lit} for the definition), and the marked chain polytope $\mathcal{C}(\Pi_A, \Pi^\ast_A, \lambda)$ coincides with an FFLV polytope (see \cite[equation (0.1)]{FeFL1} for the definition). 
\end{rem}

Let $\mathcal{M}_A$ denote the polyptych lattice corresponding to $(\Pi_A, \Pi^\ast_A, \lambda)$, and write $M_{\mathcal{C}}^{(A)} \coloneqq \pi_\mathcal{C} (\mathcal{M}_A)$ for $\mathcal{C} \in 2^{\Pi_A \setminus \Pi^\ast_A}$.
For $i \in \mathbb{Z}_{\geq 0}$, write $\Pi_A(i) \coloneqq \{p \in \Pi_A \mid r(p) = i\}$. 
More precisely, we have $\Pi_A(2i-1) = \{q_{2i-1, j} \mid \max\{n+2(1-i), 1\} \leq j \leq n+1-i\}$ for $1 \leq i \leq n$ and $\Pi_A(2i) = \{q_{2i, j} \mid \max\{n+1-2i, 1\} \leq j \leq n-i+1\}$ for $0 \leq i \leq n$, where $q_{2i, n-i+1} \coloneqq q_{i+1}^\ast$.
For every $q_{i, j} \in \Pi_A \setminus \Pi^\ast_A$, let $\varepsilon_{i, \leq j}, \varepsilon_{i, \leq j}^\prime$ denote the elements of $\mathcal{M}_A$ given by $\pi_\emptyset (\varepsilon_{i, \leq j}) = {\bm e}_{q_{i,\max\{n+1-i, 1\}}} + \cdots + {\bm e}_{q_{i, j}} \in M_\emptyset^{(A)}$ and $\pi_\emptyset (\varepsilon_{i, \leq j}^\prime) = -({\bm e}_{q_{i,\max\{n+1-i, 1\}}} + \cdots + {\bm e}_{q_{i, j}}) \in M_\emptyset^{(A)}$, where ${\bm e}_q \in \mathbb{R}^{\Pi_A \setminus \Pi^\ast_A}$ denotes the unit vector corresponding to $q \in \Pi_A \setminus \Pi^\ast_A$.
We define $\mathbb{M}_A$ (respectively, $(\mathbb{M}_A)_\mathbb{R}$) to be the set of 
\[(y_{i,j}, y_{i,j}^\prime \mid q_{i, j} \in \Pi_A \setminus \Pi^\ast_A) \in (\mathbb{Z}^{\Pi_A \setminus \Pi^\ast_A})^2\ (\text{respectively,}\ \in (\mathbb{R}^{\Pi_A \setminus \Pi^\ast_A})^2)\] 
satisfying the following equalities: 
\[y_{i,j} - y_{i,j}^\prime = 
\begin{cases}
\min \{0, -y_{i+1,j}^\prime + y_{i+1,j-1}\} &\text{if}\ q_{i+1,j} \in \Pi_A \setminus \Pi^\ast_A,\\
0 &\text{otherwise},
\end{cases}\]
where $y_{i+1,0} \coloneqq 0$ when $j = 1$.
In a way similar to the proof of Proposition \ref{p:relation_sp_points_M_1} (see also Proposition \ref{p:relation_sp_points_M_2}), we deduce the following analog of Proposition \ref{p:relation_sp_points_type_C_GT}.

\begin{prop}\label{p:relation_sp_points_type_A_GT}
Each point $\varphi \in {\rm Sp}(\mathcal{M}_A)$ satisfies 
\[\varphi(\varepsilon_{i, \leq j}) + \varphi(\varepsilon_{i, \leq j}^\prime) = 
\begin{cases}
\min \{0, \varphi(\varepsilon_{i+1, \leq j}^\prime) + \varphi(\varepsilon_{i+1, \leq j-1})\} &\text{if}\ q_{i+1,j} \in \Pi_A \setminus \Pi^\ast_A,\\
0 &\text{otherwise}
\end{cases}\]
for $q_{i,j} \in \Pi_A \setminus \Pi^\ast_A$, where $\varepsilon_{i+1, \leq 0} \coloneqq 0 \in \mathcal{M}_A$ when $j = 1$. 
In addition, there exists a bijective map ${\rm Sp}(\mathcal{M}_A) \rightarrow \mathbb{M}_A$, $\varphi \mapsto (y_{i,j}, y_{i,j}^\prime \mid q_{i, j} \in \Pi_A \setminus \Pi^\ast_A)$, given by $y_{i,j} \coloneqq \varphi(\varepsilon_{i, \leq j})$ and $y_{i,j}^\prime \coloneqq -\varphi(\varepsilon_{i, \leq j}^\prime)$, which naturally extends to a bijective map ${\rm Sp}_\mathbb{R}(\mathcal{M}_A) \rightarrow (\mathbb{M}_A)_\mathbb{R}$.
\end{prop}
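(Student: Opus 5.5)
The plan follows the proof of Proposition \ref{p:relation_sp_points_M_1}, which the type $C$ analogue (Proposition \ref{p:relation_sp_points_type_C_GT}) also adapts. The argument is local: the PL fan $\Sigma(\mathcal{M}_A)$ decomposes row by row into the blocks treated in Section \ref{ss:basic_marked_posets}.

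\emph{Step 1: the relations.} Let $\varphi \in {\rm Sp}(\mathcal{M}_A)$ and $q_{i,j} \in \Pi_A \setminus \Pi^\ast_A$. By definition of a point,
\[\varphi(\varepsilon_{i,\leq j}) + \varphi(\varepsilon_{i,\leq j}^\prime) = \min\{\varphi(\varepsilon_{i,\leq j} +_{\mathcal{C}} \varepsilon_{i,\leq j}^\prime) \mid \mathcal{C}\}.\]
I will compute $\mu_{\mathcal{C}}(\pm \pi_\emptyset(\varepsilon_{i,\leq j}))$ directly from \eqref{eq:transfer_linear}. The vector $\pi_\emptyset(\varepsilon_{i,\leq j})$ is a sum of consecutive unit vectors in row $i$, starting at the leftmost element $q_{i,\max\{n+1-i,1\}}$ of that row. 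Hence $\mu_{\mathcal{C}}$ changes only the coordinates in row $i+1$, and only at elements covering exactly one of the chosen row-$i$ entries together with one unchosen entry. Because the sum starts at the left end of the row, the only such element is $q_{i+1,j}$. Therefore $\varepsilon_{i,\leq j} +_{\mathcal{C}} \varepsilon_{i,\leq j}^\prime$ is $0$ if $q_{i+1,j} \notin \mathcal{C}$ or $q_{i+1,j} \notin \Pi_A \setminus \Pi^\ast_A$. Otherwise it equals the element $-\bm e_{q_{i+1,j}}$, which I rewrite as $\varepsilon_{i+1,\leq j}^\prime +_\emptyset \varepsilon_{i+1,\leq j-1}$.

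The main point to check is that $\varphi$ is additive on this pair, i.e.
\[\varphi(-\bm e_{q_{i+1,j}}) = \varphi(\varepsilon_{i+1,\leq j}^\prime) + \varphi(\varepsilon_{i+1,\leq j-1}).\]
I expect this to hold because $\varepsilon_{i+1,\leq j}^\prime$ and $\varepsilon_{i+1,\leq j-1}$ lie in a common maximal cone of $\Sigma(\mathcal{M}_A)$. Here I will use the type $A$ analogue of the cone description $\sigma_{\bm\epsilon}$: cones cut out by the signs of $x_{i,j}-x_{i,j+1}$, with the boundary adjustments that Figure \ref{Hasse_marked} requires at the marked elements. Points are linear on each such cone. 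The boundary cases are handled by convention: for $j=1$ use $\varepsilon_{i+1,\leq 0}=0$, and when $j-1$ lies below the start of row $i+1$, the same convention applies with the empty sum. This gives the stated relation.

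\emph{Step 2: the inverse map.} For bijectivity I will construct the inverse. The vectors $\pi_\emptyset(\varepsilon_{i,\leq j})$ form a $\mathbb{Z}$-basis of $M_\emptyset^{(A)}$, since in each row they are partial sums starting from the left end. Given $(y_{i,j},y^\prime_{i,j}) \in \mathbb{M}_A$, write ${\bm z} = \sum z_{i,j}\, \pi_\emptyset(\varepsilon_{i,\leq j})$ and define
\[\varphi({\bm z}) \coloneqq \sum_{i,j} \bigl(\max\{z_{i,j},0\}\, y_{i,j} + \min\{z_{i,j},0\}\, y^\prime_{i,j}\bigr).\]
When $y_{i,j}=y'_{i,j}$ this term is linear; this happens when $q_{i+1,j}$ is marked, or is absent as at the right end of each row. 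Then I verify the point condition coordinatewise, as in \eqref{eq:point_condition_for_varphi_empty}. This requires the fact that each coordinate $z_{i,j}$ is affected only by $\mu_{\{q_{i+1,j}\}}$, so that $F_{k,\mathcal{C}}$ depends only on whether $q_{i+1,j}\in\mathcal{C}$. The three-case computation from Proposition \ref{p:relation_sp_points_M_1} then goes through verbatim, using the defining relation of $\mathbb{M}_A$ in place of $a+b=\min\{0,c_k\}$.

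\emph{Main obstacle.} The delicate step is this decoupling. In the type $A$ poset the rows have staggered lengths: rows below the middle begin at $j=n+1-i$, and marked elements sit at both ends. So I must confirm that the change of basis to partial sums really makes $\mu_{\mathcal{C}}$ act as a product of independent rank-one moves, one per $q_{i+1,j}$, including the Figure \ref{Hasse_marked}-type blocks at the boundary. The remaining steps are the injectivity of $\varphi \mapsto (y,y')$ (a point is determined by its values on the cone generators) and the real version; both are routine.
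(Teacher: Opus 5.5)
Your Step 1 and the injectivity argument are sound, and they are what the paper's one-line proof (``in a way similar to Proposition~\ref{p:relation_sp_points_M_1}'') has in mind. One check is left implicit. To identify $\varepsilon_{i,\leq j}+_{\mathcal C}\varepsilon'_{i,\leq j}$ with $\varepsilon'_{i+1,\leq j}+\varepsilon_{i+1,\leq j-1}$ you need $\mu_{\mathcal C}(-\bm e_{q_{i+1,j}})=-\bm e_{q_{i+1,j}}$. This does hold, because every upper cover of $q_{i+1,j}$ in $\Pi_A$ has a second lower cover.

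The genuine gap is in Step 2: the decoupling you rely on is false in type $A$.

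\emph{Why it worked before.} In Proposition~\ref{p:relation_sp_points_M_1}, the correction produced by the move at $q_k$ is a multiple of $\bm e_{q_k}$, which lies in $V(\mathcal M^{(1)},\emptyset)$. That is exactly why $\pi_\emptyset(m+_{\mathcal C}m')$ splits as $\sum_k F_{k,\mathcal C}(z_k,z'_k)$ plus linear terms. It is also why $\varphi_{\bm a,\emptyset}$ can be evaluated piece by piece.

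\emph{Why it fails here.} The move at $q_{i+1,j}$ produces a multiple of $\bm e_{q_{i+1,j}}=\pi_\emptyset(\varepsilon_{i+1,\leq j})-\pi_\emptyset(\varepsilon_{i+1,\leq j-1})$. This vector is not in the linearity space. It shifts $z_{i+1,j}$ and $z_{i+1,j-1}$, and these feed the nonlinear terms of $\mu_{\mathcal C}$ at $q_{i+2,j}$ and $q_{i+2,j-1}$.

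\emph{Counterexample.} Take $n=3$, $\pi_\emptyset(m)=\bm e_{q_{2,2}}-\bm e_{q_{3,1}}$, $\pi_\emptyset(m')=-\bm e_{q_{2,2}}$, and $\mathcal C=\{q_{3,2},q_{4,1}\}$.
\begin{itemize}
\item A direct computation gives $\pi_\emptyset(m+_{\mathcal C}m')=-\bm e_{q_{3,1}}-\bm e_{q_{3,2}}-\bm e_{q_{4,1}}$.
\item The sum of your separate pieces (the $z_{2,2}$-piece $-\bm e_{q_{3,2}}$ and the $z_{3,1}$-piece $-\bm e_{q_{3,1}}$) is only $-\bm e_{q_{3,1}}-\bm e_{q_{3,2}}$.
\item The extra $-\bm e_{q_{4,1}}$ is what produces the value $-y'_{3,2}-y'_{4,1}$ among the $\varphi(m+_{\mathcal C}m')$.
\item The required identity $\varphi(m)+\varphi(m')=\min\{-y'_{3,1},-y'_{3,2},-y'_{3,2}-y'_{4,1}\}$ holds only after combining the defining relations of $\mathbb M_A$ at $q_{2,2}$ and at $q_{3,1}$.
\end{itemize}
So no coordinatewise three-case check can show that your candidate $\varphi$ is a point. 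Surjectivity needs an argument that follows this cascade through consecutive rows.

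\emph{A possible route.} One workable strategy is to prove two things:
\begin{enumerate}
\item $\varphi\circ\pi_{\mathcal C}^{-1}$ is concave (superadditive) on every chart.
\item $\varphi\circ\pi_{\mathcal C}^{-1}$ is linear on the chart $\mathcal C_0$ consisting of those non-marked $q_{i+1,j}$ (with $q_{i,j}$ non-marked) such that $y'_{i+1,j}\geq y_{i+1,j-1}$.
\end{enumerate}
Together these give the point condition, with the minimum attained in $\mathcal C_0$. Even for $n=3$, the coefficient of $x_{q_{3,2}}$ in $\varphi\circ\pi_{\mathcal C_0}^{-1}$ vanishes only after the relation at $q_{2,2}$ is combined with the one at $q_{3,1}$.
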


Note that the marked poset $(\Pi_A, \Pi^\ast_A, \lambda)$ satisfies ($\spadesuit$) in Introduction. 
In addition, if $\lambda$ satisfies
\begin{enumerate}
\item[($\dagger 3$)] $\lambda_{k+1} - \lambda_{k} \geq 2$ for all $1 \leq k \leq n$, 
\end{enumerate}
then there exists ${\bm u} = (u_p)_{p \in \Pi_A \setminus \Pi_A^\ast} \in {\mathcal O}(\Pi_A, \Pi_A^\ast, \lambda) \cap \z^{\Pi_A \setminus \Pi_A^\ast}$ satisfying condition \eqref{eq:assumption} and $u_{p_1} < u_{p_2}$ for all $p_1, p_2 \in \Pi_A$ with $p_1 \lessdot p_2$, where $u_p \coloneqq \lambda_p$ for $p \in \Pi_A^\ast$.
This implies that ${\bm u}$ is an interior lattice point of ${\mathcal O}(\Pi_A, \Pi_A^\ast, \lambda)$.  
From these, it follows that all the results in Section \ref{s:type_C} are naturally extended to $\mathcal{M}_A$ as in Introduction. 
More precisely, let us consider a polynomial ring $\Bbbk [X_q, Y_q \mid q \in \Pi_A \setminus \Pi^\ast_A]$.
For $q_{i, j} \in \Pi_A \setminus \Pi^\ast_A$, we write $X_{i,j} \coloneqq X_{q_{i,j}}$, $Y_{i,j} \coloneqq Y_{q_{i,j}}$, and define $g_{i,j} = g_{q_{i,j}} \in \Bbbk [X_q, Y_q \mid q \in \Pi_A \setminus \Pi^\ast_A]$ by 
\[g_{i,j} \coloneqq X_{i,j} Y_{i,j} - 1 - X_{i+1,j-1} Y_{i+1, j},\]
where $X_{i+1,0} \coloneqq 1$ when $j = 1$ and $Y_{i+1, j} \coloneqq 0$ unless $q_{i+1,j} \in \Pi_A \setminus \Pi^\ast_A$; cf.\ \cref{p:relation_sp_points_type_A_GT}.
Then the detropicalization $\mathcal{A}_A$ of $\mathcal{M}_A$ is given by $\mathcal{A}_A = \Bbbk [X_q, Y_q \mid q \in \Pi_A \setminus \Pi^\ast_A]/I_A$ for $I_A \coloneqq (g_{i,j} \mid q_{i,j} \in \Pi_A \setminus \Pi^\ast_A)$. 
Note that $\widehat{\Delta}_A \coloneqq \widehat{\Delta}(\Pi_A, \Pi^\ast_A, \lambda) \subseteq (\mathcal{M}_A)_\mathbb{R}$ depends on the choice of ${\bm u}$.
We fix ${\bm u} \in {\mathcal O}(\Pi_A, \Pi_A^\ast, \lambda) \cap \z^{\Pi_A \setminus \Pi_A^\ast}$ as above, and consider the compactification $X_{\mathcal{A}_A}(\widehat{\Delta}_A) \coloneqq {\rm Proj}(\mathcal{A}_A^{\widehat{\Delta}_A})$ of ${\rm Spec}(\mathcal{A}_A)$ associated with $\widehat{\Delta}_A$.  
By Theorem \ref{t:2}, the marked chain-order polytope $\widehat{\Delta}_\mathcal{C}(\Pi_A, \Pi^\ast_A, \lambda)$ is a Newton--Okounkov body of $X_{\mathcal{A}_A}(\widehat{\Delta}_A)$ for each $\mathcal{C} \in 2^{\Pi_A \setminus \Pi_A^\ast}$, and there exists a flat (toric) degeneration of $X_{\mathcal{A}_A}(\widehat{\Delta}_A)$ to the normal projective toric variety associated with the integral polytope $\widehat{\Delta}_\mathcal{C}(\Pi_A, \Pi^\ast_A, \lambda)$. 
Let us consider the numbers $U_{\mathcal{M}_A}$ in \eqref{eq:rank_of_units} and $L_{\mathcal{M}_A}$ in \eqref{eq:number_of_divisors}. 
Since we have $U_{\mathcal{M}_A} = n$ and $L_{\mathcal{M}_A} = |\Pi_A \setminus \Pi_A^\ast| + n$, we see by Theorem \ref{t:3} that the Cox ring of $X_{\mathcal{A}_A}(\widehat{\Delta}_A)$ is isomorphic to the polynomial ring over $\Bbbk$ in $2|\Pi_A \setminus \Pi_A^\ast| = n (n+1)$ variables. 

\begin{ex}
Let $n = 2$. 
Then the mutation $\mu_{\emptyset, \mathcal{C}} \colon M_\emptyset^{(A)} \rightarrow M_{\mathcal{C}}^{(A)}$ for the polyptych lattice $\mathcal{M}_A$ is nonlinear if and only if $q_{3,1} \in \mathcal{C}$. 
In this case, if we write $\mu_{\emptyset, \mathcal{C}} (x_{1,2}, x_{2,1}, x_{3,1}) = (x_{1,2}^\prime, x_{2,1}^\prime, x_{3,1}^\prime)$, then $x_{1,2}^\prime, x_{2,1}^\prime$ are linear on $x_{1,2}, x_{2,1}, x_{3,1}$ while $x_{3,1}^\prime$ is nonlinear, where $x_{i,j}$ and $x_{i,j}^\prime$ are the coordinates corresponding to $q_{i,j} \in \Pi_A \setminus \Pi_A^\ast$.
More precisely, 
\[x_{3,1}^\prime = x_{3,1} + \min \{0, -x_{2,1}\};\]
cf.\ \cite[Example 2.6]{EHM}.
In addition, we have 
\[\mathcal{A}_A = \Bbbk [X_{i,j}, Y_{i,j} \mid (i,j) = (1,2), (2,1), (3,1)]/I_A,\]
where 
\[I_A = (X_{3,1} Y_{3,1} - 1,\ X_{2,1} Y_{2,1} - 1 - Y_{3,1},\ X_{1,2} Y_{1,2} - 1),\]
which implies that 
\[\mathcal{A}_A \simeq \Bbbk [X_{2,1}, Y_{2,1}, Y_{3,1}^{\pm 1}, Y_{1,2}^{\pm 1}]/(X_{2,1} Y_{2,1} - 1 - Y_{3,1});\]
cf.\ \cite[Example 6.7]{EHM}.
\end{ex}

\bibliographystyle{plain} 
\def\cprime{$'$} 

\end{document}